\renewcommand{\H}{{\textbf{[H]}}}
\newcommand{\R}{{\mathbb R}}
\newcommand{\N}{{\mathbb N}}
\newcommand{\E}{{\mathbb E}}
\newcommand{\PP}{{\mathbb P}}
\newcommand{\ind}{{\textbf{1}}}
\theoremstyle{plain}
\newtheorem{theorem}{Theorem}[section]
\newtheorem{lemma}[theorem]{Lemma}
\newtheorem{proposition}[theorem]{Proposition}
\theoremstyle{definition}
\newtheorem{remark}{Remark}[section]
\begin{document}

\title{Density Estimates for SDEs Driven by Tempered Stable Processes}

\author{L. Huang}\address{Higher School of Economics, Moscow, lhuang@hse.ru}

\date{\today}

\begin{abstract}

We study a class of stochastic differential equations driven by a possibly tempered L\'evy process, under mild conditions on the coefficients.
We prove the well-posedness of the associated martingale problem as well as the existence of the density of the solution.
Two sided heat kernel estimates are given as well. Our approach is based on the Parametrix series expansion
\end{abstract}

\subjclass{60H15,60H30,47G20}

\keywords{Tempered stable process, Parametrix, Density Bounds}

\maketitle

\section{Introduction}

This paper is devoted to the study of Stochastic Differential Equations (SDEs), driven by a class of possibly tempered L\'evy processes.
Specifically, we show the existence of the density, as well as some associated estimates, under mild assumptions on the coefficients.
Weak uniqueness is also derived as a by-product of our approach.
More precisely, we study equations with the dynamics:
\begin{equation}\label{EDS1}
X_t = x + \int_0^t F(u,X_u) du + \int_0^t \sigma(u,X_{u^-}) dZ_u,
\end{equation}
where $F: \R_+\times \R^d \rightarrow \R^d$ is Lipschitz continuous, 
$\sigma: \R_+\times \R^d \rightarrow \R^d\otimes\R^d$ is measurable bounded, H\"older continuous in space and elliptic, and $(Z_t)_{t\ge 0}$ is a symmetric L\'evy process.
We will denote by $\nu$ its L\'evy measure and assume that it satisfies what we call \textit{a tempered stable domination:}
\begin{equation}\label{PREM_TEMP}
\nu(A) \le \int_{S^{d-1}} \int_0^{+\infty} \ind_{A}(s\theta) \frac{\bar q(s)}{s^{1+\alpha}}ds \mu(d\theta), \ \forall A \in \mathcal{B}(\R^d),
\end{equation}
where $\bar{q}$ is a non increasing function, and $\mu$ is a bounded measure on the sphere $S^{d-1}$. Also $ \mathcal{B}(\R^d)$ denotes the Borelians of $\R^d$.
This is a relatively large class of L\'evy processes, that contains in particular the stable processes.

%
%When $(Z_t)_{t\ge 0}$ is a Brownian motion, existence of the density and Gaussian estimates in the uniformly elliptic setting are well known. 
%Let us mention the works of Sheu \cite{sheu:91} or Friedman \cite{frie:64}.
%The L\'evy case is much more difficult, mainly due to the huge diversity in the class of L\'evy process.
%In some specific cases however, some results are known.

In order to give density estimates on the solution of \eqref{EDS1}, it is first necessary to obtain density estimates for the driving process.
Those estimates are clear when $(Z_t)_{t\ge 0}$ is a Brownian motion.
However, the L\'evy case is much more complicated due to the huge diversity in the class of L\'evy processes.
Let us mention the papers of Bogdan and Sztonyk \cite{bogd:szto:07} and Kaleta and Sztonyk \cite{kale:szto:13} for density bounds concerning relatively general L\'evy processes.
In the case of the symmetric stable processes, the L\'evy measure writes:
\begin{equation}\label{STABLE_LEVY_MES}
\forall A \in \mathcal{B}(\R^d), \ \nu(A) = \int_0^{+\infty} \int_{S^{d-1}} \ind_{\{s\theta\in A\}} C_{\alpha,d} \frac{ds}{s^{1+\alpha}} \mu(d\theta),
\end{equation}
for some $\alpha \in (0,2)$.
In the above, $C_{\alpha,d}$ is a positive constant that only depends on $d$ and $\alpha$ (see Sato \cite{sato} for its exact value), and $S^{d-1}$ stands for the unit sphere of $\R^d$. Also, $\mu$ is a symmetric finite measure on the sphere called the spectral measure.
When the spectral measure satisfies the non-degeneracy condition:
\begin{equation}\label{ND_SPECT_MES}
\exists C >1, \ \mbox{ s.t. } \ C^{-1}|p|^\alpha \le \int_{S^{d-1}} |\langle p, \xi \rangle|^\alpha \mu(d\xi) \le C |p|^\alpha, 
\end{equation}
the driving process $Z_t$ has a density with respect to the Lebesgue measure.
In the recent work of Watanabe \cite{wata:07}, the author studied asymptotics for the density of a general stable process, and highlighted the importance of the spectral measure on the decay of the densities.
Specifically, let us denote by $p_Z(t,\cdot)$ the density of $Z_t$, and assume that there exists $\gamma>0$ such that
\begin{equation}\label{CONCENTRATION_SPECT_MES}
\mu \left(B(\theta,r) \cap S^{d-1} \right) \le C r^{\gamma -1}, \ \forall \theta \in S^{d-1}, \ \forall r\le 1/2,\ C\ge 1.
\end{equation}
Observe that in the case where the spectral measure has a bounded density with respect to the Lebesgue measure on $S^{d-1}$, this condition is satisfied with $\gamma =d$.
For a general $\gamma\in[1,d]$ such that \eqref{CONCENTRATION_SPECT_MES} holds, we have for all $ x \in \R^d$, $ t >0$:
\begin{equation}\label{UPPER_BOUND_WATA}
p_Z(t,x) \le C \frac{ t^{-d/\alpha}}{\left(1+\frac{|x|}{t^{1/\alpha}} \right)^{\alpha+\gamma}}.
\end{equation}
Moreover, a similar lower bound is given for the points $x\in\R^d$ such that two sided estimate hold in \eqref{CONCENTRATION_SPECT_MES} for $\theta= x/|x|$ (up to a modification of the threshold $r$).
We refer to Theorem 1.1 in Watanabe \cite{wata:07} for a thorough discussion. 
%
%$x/|x|$
%$$
%\mu \left(B(x/|x|,r) \cap S^{d-1} \right) \ge C^{-1} r^{\gamma -1}.
%$$
We would like to point out the difference between assumptions \eqref{CONCENTRATION_SPECT_MES} and \eqref{ND_SPECT_MES}.
The assumption \eqref{ND_SPECT_MES} alone is enough to show the existence of the density of the driving stable process.
However, it turns out that this sole assumption is not enough to get density estimates.
Instead, we need to know what we refer to (with a slight abuse of language) as the "concentration properties" of the spectral measure to deduce density bounds.
This concentration, % the concentration of the spectral measure, 
reflected by the index $\gamma$ in \eqref{CONCENTRATION_SPECT_MES}, directly impacts the decay of the density, as shown in the bound \eqref{UPPER_BOUND_WATA}.
Observe however that if the concentration index $\gamma$ is too small with respect to the dimension, namely, $\alpha +\gamma\le d$, the upper bound \eqref{UPPER_BOUND_WATA} is not homogeneous to a density, since its integral (over $\R^d$) is not defined.
We refer to the work of Watanabe \cite{wata:07} for a detailed presentation of these aspects.

A generalization of this result to the case where the L\'evy measure does not factorize as in \eqref{STABLE_LEVY_MES}, but only satisfies the domination \eqref{PREM_TEMP} has been obtained by Sztonyk \cite{szto:10}.
%
%In this article, we are interested in the case where the L\'evy measure of the driving process $Z$ is dominated by:
%\begin{equation}\label{PREM_TEMP}
%\nu(A) \le \int_{S^{d-1}} \int_0^{+\infty} \ind_{A}(s\theta) \frac{\bar q(s)}{s^{1+\alpha}}ds \mu(d\theta),
%\end{equation}
%where $\bar{q}$ is a non increasing function, and $\mu$ is a bounded measure on the sphere $S^{d-1}$.
%We refer to this inequality as a tempered domination.
%This is a relatively large class of process, as we do not suppose the factorization of the L\'evy measure as in the Stable case.
%With a slight abuse of language, we will still refer to $\mu$ as the spectral measure, and to  $\bar{q}$ as the temperation.
%Also, we assume that \eqref{CONCENTRATION_SPECT_MES} holds, with $\gamma+\alpha>d$ (see the full list of assumptions below).
%For such processes $(Z_t)_{t\ge 0}$, t
Two sided estimates of the form \eqref{UPPER_BOUND_WATA} 
are derived, up to additional multiplicative terms involving the temperation $\bar{q}$ in \eqref{PREM_TEMP}, 
with the same restrictions for the lower bound.
%$$
%p_Z(t,x) \le C\frac{ t^{-d/\alpha}}{\left(1+\frac{|x|}{t^{1/\alpha}} \right)^{\alpha+\gamma}} \bar{q}(|x|).
%$$
%Moreover, if there exists $A_{low} \subset \R^d$ such that :
%\begin{eqnarray}
%\nu \Big( B(x,r) \Big) \ge C r^\gamma\frac{\underline{q}(|x|)}{|x|^{\alpha+\gamma}},\  \forall r>0, x \in A_{low},
%\end{eqnarray}
%and $\forall r \le 1$, $\nu \Big( B(0,r)^c \Big) \le C\frac{1}{r^\alpha}$ 
%then, for all $x\in A_{low}$, the lower bound holds:
%$$
%C^{-1}\frac{ t^{-d/\alpha}}{\left(1+\frac{|x|}{t^{1/\alpha}} \right)^{\alpha+\gamma}}\underline{q}(|x|) \le p_Z(t,x)
%$$
%Once again, the importance of the L\'evy measure is highlighted in this bound by the presence of both the concentration index $\gamma$ and the temperation $\bar{q}$ in the asymptotics.

The temperation $\bar{q}$ can be seen as a way to impose finiteness of the moments of $Z$ (see Theorem 25.3 in Sato \cite{sato}), and intuitively, the integrability properties of $(Z_t)_{t\ge 0}$ should transfer to $(X_t)_{t\ge 0}$.
However, giving a density estimate on the driving process and passing it to the density of the solution of the SDE is not always possible.

In the Brownian setting, if $\sigma \sigma^*$ is uniformly elliptic, bounded and H\"older continuous, 
and $F$ is Borel bounded, it is known that two sided Gaussian estimates hold for the density of the SDE \eqref{EDS1}.
This can be derived from the works of Friedman \cite{frie:64}.
We also mention the approach of Sheu \cite{sheu:91}, that also gives estimates on the logarithmic gradient of the density.
In the stable non degenerate case, i.e. when the coefficients $F,\sigma$ are as above, and $\mu(d\xi)$ has a smooth strictly positive density with respect to the Lebesgue measure on the sphere, it can be derived from Kolokoltsov \cite{kolo:00}, that the density $p(t,s,x,y)$ of \eqref{EDS1} exists and satisfies the following two sided estimates. Fix $T>0$, there exists $C>1$ depending on $T$, the coefficients and on the non degeneracy conditions, such that for all $x,y \in\R^d$, $0< t\le s\le T$:
\begin{equation}\label{ARONSON_STABLE_NON_DEG}
 C^{-1}\frac{ (s-t)^{-d/\alpha}}{\left(1+\frac{|x-y|}{(s-t)^{1/\alpha}} \right)^{\alpha+d}} \le p(t,s,x,y) \le C\frac{ (s-t)^{-d/\alpha}}{\left(1+\frac{|x-y|}{(s-t)^{1/\alpha}} \right)^{\alpha+d}}.
\end{equation}
 
This estimate is obtained using a continuity method: the parametrix technique.
This approach is well suited to obtain density estimates for the solution of an SDE under mild assumptions on the coefficients, provided that good estimates are available on the driving process and on the so-called \textit{parametrix kernel}.

We refer to estimates of the form \eqref{ARONSON_STABLE_NON_DEG} as Aronson estimates: two sided bounds that reflect the nature of the noise of the system.
In the Gaussian setting, the density of the solution has a Gaussian behavior, and in the stable case, the density of the solution has two sided bounds homogeneous to those of the driving stable process.
This work aims at proving Aronson estimates when the driving process is a L\'evy process satisfying a tempered domination (in the sense of \eqref{PREM_TEMP}).

This research domain has a long history in the literature and is intensely developing.
We can refer to Kochubei \cite{koch:89} or the book of Eidelman, Ivasyshen and Kochubei \cite{eide:Ivas:koch:04} in the parabolic setting. In the latter, the parametrix technique is presented as well.
Other techniques to obtain density estimates comes from the Dirichlet froms techniques.
See e.g. the works of Chen and Zhang \cite{chen:zhan:13}.
Our work differs from their in the fact that we can handle very general spectral measure with our approach.
We also mention the recent works of Knopova and Kulik \cite{knop:kuli:14} that uses the parametrix technique for stable driven SDEs, when the jump part (corresponding to our coefficient $\sigma$) is real valued.
This assumption greatly simplifies the proof of the upper bound for the parametrix kernel.
Also, they consider rotationally invariant stable process, which allows them to handle the case of a drift when $\alpha \le 1$.
Some recent development around the parametrix in Bally and Kohatsu-higa \cite{ball:koha:14} in the Brownian setting give a probabilistic interpretation of the parametrix technique.

Finally, we mention that existence of the density can be investigated via Malliavin calculus.
In the Brownian setting, we refer to the works of Kusuoka and Stroock \cite{kusu:stro:84,kusu:stro:85,kusu:stro:87}, as well as Norris \cite{norr:86}.
The jump case is more difficult, and is treated by various authors. Let us mention Bichteler, Gravereaux and Jacod \cite{bich:grav:jaco:87}, and Picard \cite{pica:96}.
However, this technique requires regularity on the coefficients.
In our approach, the convergence of the parametrix series will give us the existence of the density as well as weak uniqueness, under relatively mild assumptions on the coefficients.

We will denote by $\textbf{[H]}$ the following set of assumptions.
These hypotheses ensure the existence of the density, and are those required by Sztonyk \cite{szto:10} in order to have a two sided estimate for the driving process $Z$.
\begin{trivlist}
\item[\textbf{[H-1]}] $(Z_t)_{t\ge 0}$ is a symmetric L\'evy process without Gaussian part.
We denote by $\nu$ its L\'evy measure.
There is a non increasing function $\bar{q}: \R_+ \rightarrow \R_+$, $\mu$ a bounded measure on $S^{d-1}$, and $\alpha\in(0,2)$, $\gamma\in [1,d]$ such that:
\begin{eqnarray}
\nu(A) \le \int_{S^{d-1}} \int_0^{+\infty} \ind_{A}(s\theta) \frac{\bar q(s)}{s^{1+\alpha}}ds \mu(d\theta). 
\end{eqnarray}
We assume one of the following:
\begin{itemize}
\item[\textbf{[H-1a]}] $\mu$ has a density with respect to the Lebesgue measure on the sphere.
\item[\textbf{[H-1b]}] there exists $\gamma\in[1,d]$ such that $\mu \left(B(\theta,r) \cap S^{d-1} \right) \le C r^{\gamma -1},$
with $\gamma+\alpha>d$, and for all $s>0$, there exists $C>0$ such that:
\begin{equation}
\bar{q}(s) \le C \bar{q}(2s) \label{DOUBLING_TEMP}
\end{equation}
\end{itemize}

%\item[\textbf{[H-1]}] $(Z_t)_{t\ge 0}$ is a symmetric L\'evy process.
%We denote by $\nu$ its L\'evy measure.
%There is a non increasing function $\bar{q}: \R_+ \rightarrow \R_+$, $\mu$ a bounded measure on $S^{d-1}$, and $\alpha\in(0,2)$, $\gamma\in [1,d]$ such that:
%\begin{eqnarray}
%\nu(A) \le \int_{S^{d-1}} \int_0^{+\infty} \ind_{A}(s\theta) \frac{\bar q(s)}{s^{1+\alpha}}ds \mu(d\theta), \ \ \mu \left(B(\theta,r) \cap S^{d-1} \right) \le C r^{\gamma -1},
%\end{eqnarray}
%with $\gamma+\alpha>d$. 
%Moreover, we assume that the function $Q$ defined by 
%$$Q(s)=(1+s^{\gamma-1}) \bar{q}(s),$$ 
%is decreasing, and that 
%for all $s>0$, there exists $C>0$ such that:
%\begin{eqnarray}
%&\bar{q}(s) \le C \bar{q}(2s),\  \label{DOUBLING_TEMP}
%\end{eqnarray}

\item[\textbf{[H-2]}] 
Denoting by $\varphi_Z$ the L\'evy-Kintchine exponent of $(Z_t)_{t\ge 0}$,
there is $K>0$ such that :
\begin{equation}
\E\left( e^{i\langle \zeta,Z_t \rangle}\right) = e^{t\varphi_Z(\zeta)} \le e^{-Kt|\zeta|^\alpha}, \ |\zeta| >1.
\end{equation}

%For all $p\in \R^d$, there is $\Lambda>1$ such that 
%\begin{equation}
%\Lambda^{-1}|p|^\alpha \le \int_{S^{d-1}} |\langle p, \xi \rangle|^\alpha \mu(d\xi) \le \Lambda |p|^\alpha. 
%\end{equation}
%In particular, 

\item[\textbf{[H-3]}] $F: \R_+\times \R^d \rightarrow \R^d$ is Lipschitz continuous in space or measurable and bounded and
$\sigma: \R_+\times \R^d \rightarrow \R^d\otimes\R^d$ is bounded and $\eta$-H\"older continuous in space $\eta \in (0,1)$.

Also, when $\alpha \le 1$ , we assume $F=0$.

\item[\textbf{[H-4]}]  $\sigma$ is uniformly elliptic. There exists $\kappa>1$ for all $x,\xi \in \R^d$,  such that:
\begin{equation}
\kappa^{-1} |\xi|^2 \le \langle \xi, \sigma(t,x) \xi \rangle \le \kappa |\xi|^2.
\end{equation}

\item[\textbf{[H-5]}]  For all $ A \in \mathcal{B}$, Borelian, we define the measure:
\begin{equation}
\nu_t(x,A)= \nu\big( \{z \in \R^d; \ \sigma(t,x)z \in A \}\big).
\end{equation}
We assume these measures to be H\"older continuous with respect to the first parameter, that is, %there exists $\kappa$ 
for all $A \in \mathcal{B}(\R^d)$, 
$$
|\nu_t(x,A) - \nu_t(x',A)| \le C \delta\wedge|x-x'|^{\eta(\alpha\wedge 1)}  \int_{S^{d-1}} \int_0^{+\infty}  \ind_{A}(s\theta) \frac{\bar q(s)}{s^{1+\alpha}}ds \mu(d\theta).
$$

\item[\textbf{[H-LB]}] There is a non increasing function $\underline{q}: \R_+ \rightarrow \R_+$ and $A_{low} \subset  \R^d$, such that for all $x\in A_{low}$,
\begin{eqnarray}
\nu \Big( B(x,r) \Big) &\ge& C r^\gamma\frac{\underline{q}(|x|)}{|x|^{\alpha+\gamma}}, \ \forall r >0
 %\nu \Big( B(0,r)^c \Big) &\le& C\frac{1}{r^\alpha}, \ \forall r \in(0,1).
\end{eqnarray}

Also, for all $(t,x)\in\R_+\times \R^d$, we assume that $\sigma(t,x) A_{low} \subset A_{low}$.
%\begin{eqnarray}
%\nu \Big( B(x,r) \Big) \ge C r^\gamma\frac{\underline{q}(|x|)}{|x|^{\alpha+\gamma}},\ \ \nu \Big( B(0,r)^c \Big) \le C\frac{1}{r^\alpha}.
%\end{eqnarray}

\end{trivlist}

\bigskip

%%%%%%%%%%%%%%%%%%%%%%%%%END Assumptions%%%%%%%%%%%%%%%%%%%%%%%%%%%%

In the rest of this paper, we will assume that $\textbf{[H-1]}$ to $\textbf{[H-5]}$ is in force.
Also, we say that $\textbf{[H]}$ holds when $\textbf{[H-1]}$ to $\textbf{[H-5]}$ hold.
Note that assumption \textbf{[H-2]} is crucial in order to get the existence of the density.
We point out that \textbf{[H-LB]} is needed for the lower bound, and that the upper bound holds independently.

Recall that for a Markov process $(X_t)_{t\ge 0}$, the infinitesimal generator (or simply the generator) $L_t(x,\nabla_x)$ is defined as the limit:
$$
L_t(x,\nabla_x) f(x) = \lim_{s\downarrow t} \frac{\E [f(X_s) | X_t=x] - f(x)}{s-t}, \ f\in\mathcal C^{1,2}_0(\R_+\times \R^{d},\R).
$$
We use a terminology from the PDE theory to emphasise the variable on which the operator acts.
This specification will be useful later, as we will apply these operators to transition densities, with two space arguments.

Also, a probability measure $\PP$ on $\Omega = \mathcal{D}(\R_+\times \R^{d},\R)$ the space of c\`adl\`ag functions
is a solution to the martingale problem associated with $L_t(x,\nabla_x)$ if and only if
for every $x\in\R^{d}$, 
%there exists a unique probability measure $\PP$ on $\Omega = \mathcal{D}(\R_+\times \R^{d},\R)$ the space of c\`adl\`ag functions, such that 
for all $f \in \mathcal C^{1,2}_0(\R_+\times \R^{d},\R)$ (twice continuously differentiable functions with compact support), denoting by $(X_t)_{t \ge 0}$ the canonical process, we have:
$$
\PP(X_t=x ) =1\ \mbox{ and } \ f(s,X_s) - \int_t^s(\partial_u+L_u(x,\nabla_x)) f(u,X_u)du \   \mbox{  is a $\PP$- martingale.}
$$

Under $\textbf{[H]}$, we are able to prove the following.
\begin{theorem}[\textbf{Weak Uniqueness}]
\label{MART_PB_THM_TEMP}
Assume \textbf{[H]} holds. The martingale problem associated with the generator $L_t(x,\nabla_x)$ of the equation \eqref{EDS1}:
$$
L_t(x,\nabla_x)\varphi(x)=\langle F(t,x), \nabla_x \varphi(x)\rangle+ \int_{\R^d} \varphi(x+\sigma(t,x)z) - \varphi(x) - \frac{\langle \sigma(t,x)z , \nabla_x \varphi(x)\rangle}{1+|z|^2} \nu(dz),
$$
admits a unique solution. 
Thus, weak uniqueness holds for \eqref{EDS1}.
\end{theorem}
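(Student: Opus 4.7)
The plan is to implement the parametrix continuity method in the Markov setting, following Kolokoltsov and Eidelman--Ivasyshen--Kochubei, adapted to the tempered L\'evy framework. The starting point is to freeze the spatial argument of the coefficients at a target point $y\in\R^d$: for such $y$, consider the auxiliary process
$$
\tilde X_s^{y} = x + \int_t^s F(u,y)\,du + \int_t^s \sigma(u,y)\,dZ_u, \qquad s\ge t,
$$
whose generator $\tilde L_u^{y}$ has coefficients independent of the spatial variable (frozen at $y$). Up to the deterministic drift and the linear transform $z\mapsto\sigma(u,y)z$ of the L\'evy measure, the increment $\tilde X_s^{y}-x$ is controlled by $Z$, so under $\textbf{[H-1]}$--$\textbf{[H-2]}$ one obtains an explicit density $\tilde p^{y}(t,s,x,z)$ enjoying Sztonyk-type two-sided bounds, with the temperation $\bar q$ and the ellipticity constant from $\textbf{[H-4]}$ built in.

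Next I build a candidate density for \eqref{EDS1} via the parametrix series
$$
p(t,s,x,y) = \sum_{n\ge 0} \tilde p\otimes H^{(n)}(t,s,x,y),
$$
where the space-time convolution is $f\otimes g(t,s,x,y)=\int_t^s du\int_{\R^d}f(t,u,x,z)\,g(u,s,z,y)\,dz$, $H^{(0)}=\mathrm{Id}$, $H^{(n+1)}=H\otimes H^{(n)}$, and the parametrix kernel is
$$
H(t,s,x,y) = \bigl(L_t(x,\nabla_x) - \tilde L_t^{y}(x,\nabla_x)\bigr)\tilde p^{y}(t,s,x,y),
$$
the terminal point $y$ playing the role of freezing variable. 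The entire mechanism rests on a smoothing-in-time estimate of the form $|H(t,s,x,y)|\le C(s-t)^{-1+\delta}\,\bar p(t,s,x,y)$ for some $\delta\in(0,1)$, where $\bar p$ is a tempered stable-like envelope density. The exponent $\delta$ emerges from the $\eta(\alpha\wedge 1)$-H\"older continuity of $x\mapsto\nu_t(x,\cdot)$ given by $\textbf{[H-5]}$, which bounds $L_t-\tilde L_t^{y}$ in terms of $|x-y|^{\eta(\alpha\wedge 1)}$, combined with the self-similarity of $\tilde p^{y}$ in $(s-t)^{-1/\alpha}|x-y|$, which turns spatial H\"older decay into a time-integrable singularity. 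Iterating yields a bound on $H^{(n)}$ of Gamma type, making the series absolutely and uniformly convergent on compacts.

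With $p$ in hand, I would verify that for every terminal datum $f\in\mathcal C_0^\infty(\R^d)$ and every $T>0$ the function
$$
u(t,x)=\int_{\R^d}p(t,T,x,y)\,f(y)\,dy
$$
belongs to $\mathcal C^{1,2}_0([0,T]\times\R^d)$ in a suitable weak sense and satisfies the backward Kolmogorov equation $(\partial_t+L_t(x,\nabla_x))u=0$ on $[0,T)\times\R^d$ with $u(T,\cdot)=f$. Granted this, weak uniqueness is immediate: for any probability measure $\PP$ solving the martingale problem started from $(0,x_0)$, the process $u(t\wedge T,X_{t\wedge T})$ is a bounded $\PP$-martingale by the very definition of the martingale problem, hence $\E^\PP[f(X_T)]=u(0,x_0)$ is determined independently of $\PP$. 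Uniqueness of one-dimensional marginals follows, and a standard Markovian bootstrap (applying the same argument conditionally on $\F_{t_i}$ along a finite grid $0=t_0<t_1<\cdots<t_k$) upgrades this to uniqueness of all finite-dimensional distributions.

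The main obstacle will be the pointwise smoothing estimate on $H$ uniformly in the freezing point $y$. Unlike in the Brownian or rotationally invariant stable settings, $L_t-\tilde L_t^{y}$ is a genuinely non-local operator weighted by $\nu$, and its action on $\tilde p^{y}$ has to be controlled by Sztonyk's density bound, which is only homogeneous to a density under the constraint $\alpha+\gamma>d$ imposed in $\textbf{[H-1b]}$. Preserving the doubling property \eqref{DOUBLING_TEMP} of the temperation $\bar q$ through the iterated space-time convolutions, and handling the drift $F$ when $\alpha>1$ while invoking $\textbf{[H-3]}$ to discard it when $\alpha\le 1$, form the chief technical hurdles of the proof.
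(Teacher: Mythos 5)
Your plan takes a genuinely different route from the paper, and that route has a real gap exactly where you hedge with ``in a suitable weak sense.''

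The paper does \emph{not} prove uniqueness by showing that $u(t,x)=\int p(t,T,x,y)f(y)\,dy$ is a classical (or sufficiently regular) solution of the backward Kolmogorov equation and then plugging it into the martingale property. Instead it follows Bass--Perkins as adapted by Menozzi: take two martingale solutions $\PP^1,\PP^2$, form the linear functional $S^\Delta f = \E^1\!\int_t^T f(s,X_s)\,ds - \E^2\!\int_t^T f(s,X_s)\,ds$, introduce the mollified potentials $G^{\varepsilon,y}$ and $M^{\varepsilon,y}$ built from the frozen density $\tilde p^{s+\varepsilon,y}$, and test the martingale relation against $\Psi_\varepsilon(t,x)=\int_{\R^d} G^{\varepsilon,y}(\phi^{\varepsilon,y})(t,x)\,dy$. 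The decomposition $\partial_t\Psi_\varepsilon + L_t\Psi_\varepsilon = I_1^\varepsilon + I_2^\varepsilon$, the Dirac-convergence Lemma \ref{convergence_dirac_temp} (giving $I_1^\varepsilon\to -h$), and the smoothing bound on the kernel (Lemma \ref{SMTH_H}, giving $|I_2^\varepsilon|_\infty\le C(T-t)^\omega|h|_\infty$) combine to $|S^\Delta h|\le \tfrac12\|S^\Delta\|\,|h|_\infty$ for $T-t$ small, whence $\|S^\Delta\|=0$. Crucially, $\Psi_\varepsilon$ is a regularized test function built from the frozen density, so its membership in $C^{1,2}_0$ is free; the argument never asserts regularity of the parametrix \emph{sum} $p$.

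The gap in your proposal is precisely the step you gloss over: showing that $u=\int p(t,T,\cdot,y)f(y)\,dy$ has enough regularity for $(\partial_t+L_t)u$ to be defined pointwise, and that $u(t\wedge T,X_{t\wedge T})$ is a $\PP$-martingale for an arbitrary solution of the martingale problem. Under only H\"older continuity of $x\mapsto\nu_t(x,\cdot)$ (assumption $\textbf{[H-5]}$), the parametrix sum is expected to have at most $\alpha+\eta$-type H\"older regularity in space, not $C^2$; and for $\alpha>1$ one must still control $\nabla_x p$ of the \emph{full} sum, not merely $\nabla_x\tilde p$. Lemma \ref{LEMME_IT_KER_TEMP} gives pointwise upper bounds on $p$, but no bounds on spatial derivatives of the iterated terms and no Schauder-type estimate, so the parametrix series as constructed in the paper cannot be directly fed into the martingale relation for $f\in C^{1,2}_0$. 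Either you must prove such regularity estimates (a substantial additional piece of work, as in Kolokoltsov or Eidelman--Ivasyshen--Kochubei, where coefficients are typically smoother), or you must switch to the Bass--Perkins functional argument as the paper does.

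A secondary discrepancy: you freeze coefficients at the fixed point $y$, whereas the paper freezes them along the deterministic flow, $\sigma(u,\theta_{u,T}(y))$ and $F(u,\theta_{u,T}(y))$. When $F$ is bounded the two coincide ($\theta$ is the identity), but when $F$ is Lipschitz and unbounded the flow transport is essential: the bound \eqref{UB_FROZEN_DENS} and the compatibility used in Proposition \ref{EST_H_PROP} and Lemma \ref{FIRST_STEP} are stated in terms of $|\theta_{t,T}(y)-x|$, and naive freezing at $y$ would not produce the cancellation \eqref{FLOW_REVERSE_OK} on which those estimates rest.
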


Let us mention that the weak uniqueness has been derived in similar setting by various authors.
See for instance \cite{stro:75} or \cite{koma:08}.
We propose here an approach based on a parametrix technique, which derives from the works of Bass and Perkins \cite{bass:perk:09}.

Also, we have the following density estimate:

\begin{theorem}[\textbf{Density Estimates}]\label{MTHM_TEMP}
Under $\textbf{[H]}$, the unique weak solution of \eqref{EDS1} has for every $0 \le t\le s$, 
an absolutely continuous transition probability. Precisely, for all $0 \le t\le s, $ and $ x,y\in \R^{d}$,  
\begin{equation}
\PP(X_s \in dy | X_t =x)=p(t,s,x,y) dy.
\end{equation}
Assume that the function $Q$ defined below is decreasing, and fix a deterministic time horizon $T>0$.
There exists $C_1\ge 1$ depending on $T$ and the parameters in $\textbf{[H]}$, such that the following density estimates holds:

$ \forall 0\le t \le s\le T,\ \forall (x,y)\in \R^{d}, $
\begin{equation}\label{UB_DENS_SDE}
p(t,s,x,y) \le C_1\frac{ (s-t)^{-d/\alpha}}{\left(1+\frac{|y-\theta_{s,t}(x) |}{(s-t)^{1/\alpha}} \right)^{\alpha+\gamma}} Q(|y-\theta_{s,t}(x) |),
\end{equation}

where:

\begin{itemize}

\item when the drift $F$ is bounded, $\theta$ is the identity map: $\theta_{s,t}(x)=x$, and
\begin{itemize}
\item under $\textbf{[H-1a]}$, $\gamma=d$ and for all $s>0$, $Q(s) =\bar{q}(s)$, 
\item under $\textbf{[H-1b]}$, for all $s>0$, $Q(s) =\min(1,s^{\gamma-1})\bar{q}(s)$,
\end{itemize}

\item when the drift $F$ is Lipschitz continuous, $\theta_{s,t}(x)$ denotes the solution to the ordinary differential equation:
$$
\frac{d}{ds} \theta_{s,t}(x) = F(\theta_{s,t}(x)), \ \theta_{t,t}(x) =x, \ \forall 0 \le t \le s \le T,
$$
and 
\begin{itemize}
\item under $\textbf{[H-1a]}$, $\gamma=d$ and for all $s>0$, $Q(s) =\min(1,s)\bar{q}(s)$,
\item under $\textbf{[H-1a]}$, for all $s>0$, $Q(s) =\min(1,s,s^{\gamma-1})\bar{q}(s)$.
\end{itemize}

\end{itemize}

Moreover, assume \textbf{[H-LB]} holds, and that for some $C>0$,
%Then, if there exists $t_0 \in [t,T]$ such that $t_0 - t \ge C( T-t ) $, for some positive constant $C$,  and 
\begin{equation}\label{CLAIM_LOWER_BOUND}
B \big( \theta_{t,T}(y) - x, C(T-t)^{1/\alpha} \big) \subset 
%\sigma(\theta_{s,0}(x))^{-1}( \theta_{0,t}(y) - x )\in {\rm int}
A_{low},
\end{equation}
%for $T$ small enough, depending on $\inf_{s\in[0,t/2]} dist(\sigma(\theta_{s,0}(x))^{-1}(\theta_{0,t}(y)-x), \partial A_{low})$,
there exists $C_2>1$  such that 
%$dist(\sigma(x)^{-1}(\theta_{0,t}(y)-x), \partial A_{low})$ such that 
\begin{equation}\label{LB_DENS_SDE}
C_2^{-1}\frac{ (s-t)^{-d/\alpha}}{\left(1+\frac{|y-\theta_{s,t}(x) |}{(s-t)^{1/\alpha}} \right)^{\alpha+\gamma}}\underline{q}(|y-\theta_{s,t}(x) |) \le p(t,s,x,y).
\end{equation}
%Moreover, assume \textbf{[H-LB]} holds. 
%Then, for all $\sigma(x)^{-1}(\theta_{0,t}(y)-x) \in {\rm int} A_{low}$, 
%\begin{equation}\label{LB_DENS_SDE}
%C^{-1}_{\ref{MTHM_TEMP}}\frac{ t^{-d/\alpha}}{\left(1+\frac{|y-\theta_{t,0}(x)|}{t^{1/\alpha}} \right)^{\alpha+\gamma}}\underline{q}(|y-\theta_{t,0}(x)|) \le p(t,x,y).
%\end{equation}
\end{theorem}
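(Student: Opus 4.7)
The plan is to use a McKean--Singer-style parametrix expansion frozen at the terminal point. For fixed $y\in\R^d$ and terminal time $s$, I introduce the frozen process with dynamics $d\tilde X_u^{y,s} = F(u,\tilde X_u^{y,s})du + \sigma(u,\theta_{u,s}(y))dZ_u$; this has a spatially constant jump structure, hence a density $\tilde p^{y,s}$ obtainable by Fourier inversion thanks to \textbf{[H-2]}. Since the transported L\'evy measure $\nu_t(\theta_{u,s}(y),\cdot)$ inherits the tempered stable domination from \textbf{[H-1]} and \textbf{[H-4]}, the results of Sztonyk~\cite{szto:10} yield precisely
$$
\tilde p^{y,s}(t,s,x,y) \le C\,\frac{(s-t)^{-d/\alpha}}{\bigl(1+|y-\theta_{s,t}(x)|/(s-t)^{1/\alpha}\bigr)^{\alpha+\gamma}}\,Q(|y-\theta_{s,t}(x)|),
$$
with the four variants of $Q$ produced by crossing \textbf{[H-1a]}/\textbf{[H-1b]} with the bounded-drift / Lipschitz-drift cases.

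Next I define the parametrix kernel $H(t,s,x,y) = (L_t(x,\nabla_x)-\tilde L_t^{y,s}(x,\nabla_x))\tilde p^{y,s}(t,s,x,y)$. The difference of generators involves the difference of transported L\'evy measures, whose H\"older modulus of order $\eta(\alpha\wedge 1)$ is exactly \textbf{[H-5]}, plus a drift-difference term controlled by \textbf{[H-3]}. Integrating against $\tilde p^{y,s}$ produces the standard smoothing-in-time estimate
$$
|H(t,s,x,y)| \le C\,(s-t)^{-1+\delta}\,\frac{(s-t)^{-d/\alpha}}{\bigl(1+|y-\theta_{s,t}(x)|/(s-t)^{1/\alpha}\bigr)^{\alpha+\gamma}}\,Q(|y-\theta_{s,t}(x)|),
$$
for some $\delta>0$ depending on $\eta$ and $\alpha$ (a genuine $\eta(\alpha\wedge 1)/\alpha$ when $\alpha>1$, absorbed via the dampening of $F$ when $\alpha\le 1$ and $F=0$). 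By Theorem~\ref{MART_PB_THM_TEMP} the density is then identified with the convergent series $p=\sum_{k\ge 0}\tilde p\otimes H^{(\otimes k)}$, where $\otimes$ denotes space-time convolution, which yields both the absolute continuity and the upper bound \eqref{UB_DENS_SDE}.

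The main obstacle is the \emph{convolution stability} of the tempered profile: one has to check that convolving two copies of the bound in $(z,u)\in\R^d\times[t,s]$ reproduces itself up to a constant, at the cost of only a beta-function-type time factor that sums to a finite $C(T)$ over the iterations. The tempering $\bar q$ and the concentration index $\gamma$ make this far more delicate than in the pure stable case. I would treat separately the diagonal regime $|y-\theta_{s,t}(x)|\lesssim (s-t)^{1/\alpha}$ and the off-diagonal regime, exploit the doubling property \eqref{DOUBLING_TEMP} of $\bar q$ in case \textbf{[H-1b]}, and, when $F$ is Lipschitz, perform a change of variables along the flow $\theta$ so that the spatial convolution in $x$ becomes a genuine convolution on $\R^d$ compatible with the tempered stable scaling. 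The restriction $\alpha+\gamma>d$ in \textbf{[H-1b]} is precisely what keeps the dominating profile integrable throughout these iterations.

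For the lower bound \eqref{LB_DENS_SDE}, I would use a chaining argument along the flow. Under \textbf{[H-LB]}, the inclusion \eqref{CLAIM_LOWER_BOUND} guarantees that all relevant intermediate displacements lie in $A_{low}$, so Sztonyk's two-sided control applies to $\tilde p^{y,s}$ and delivers a matching lower bound of the prescribed shape with $\underline q$ in place of $\bar q$. The parametrix correction $\sum_{k\ge 1}\tilde p\otimes H^{(\otimes k)}$ is then absorbed using the upper bound just established, either by splitting $[t,s]$ into sufficiently many subintervals so that the correction becomes a small perturbation of the principal term, or, in the genuinely off-diagonal regime, by a standard chain of ``typical'' jumps linking $x$ to $y$.
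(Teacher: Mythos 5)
Your broad architecture matches the paper's (McKean--Singer parametrix around a frozen proxy, convergence of the series, separate probabilistic argument for the lower bound), but there are two concrete gaps in the intermediate steps.

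First, your frozen process keeps the drift nonlinear: you write
$d\tilde X_u^{y,s} = F(u,\tilde X_u^{y,s})\,du + \sigma(u,\theta_{u,s}(y))\,dZ_u$,
so only the jump coefficient is frozen. With a state-dependent drift, $\tilde X^{y,s}$ is \emph{not} a deterministic curve plus an independent L\'evy integral, and its characteristic function has no closed form; the Fourier-inversion argument you invoke (and hence the transfer of Sztonyk's bounds) does not go through. The paper freezes the drift as well, along the transport curve: $\tilde X^{T,y}_s = x + \int_t^s F(u,\theta_{u,T}(y))\,du + \int_t^s \sigma(u,\theta_{u,T}(y))\,dZ_u$, so the frozen density is explicitly $\tilde p^{T,y}(t,T,x,y) = p_{\mathcal S}(T-t,\theta_{t,T}(y)-x)$ for a genuine L\'evy process $\mathcal S$, and this is what makes the whole Fourier/Sztonyk machinery available (Proposition \ref{EST_FRZ_DENS_PROP}). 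Note also that the frozen density retains the undamaged temper $\bar q$; the degradation to $Q$ arises only in the estimate of the kernel $H$ (Proposition \ref{EST_H_PROP}), not already at the frozen-density level as your sketch suggests.

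Second, the pointwise kernel bound $|H(t,s,x,y)|\le C\,(s-t)^{-1+\delta}\,\bar p(t,s,x,y)$ is false. The correct pointwise estimate (Proposition \ref{EST_H_PROP}) is
$|H(t,s,x,y)|\le C\Bigl((s-t)^{-1/\alpha}\ind_{\alpha>1} + \tfrac{\delta\wedge|x-\theta_{t,s}(y)|^{\eta(\alpha\wedge 1)}}{s-t}\Bigr)\bar p(t,s,x,y)$,
and in the off-diagonal regime ($|x-\theta_{t,s}(y)|\gg(s-t)^{1/\alpha}$) the H\"older factor saturates at $\delta$, so the prefactor behaves like $(s-t)^{-1}$ with \emph{no} time gain. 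The $(s-t)^{\omega-1}$ improvement is only recovered after the spatial integration in the convolution step (Lemma \ref{SMTH_H} and the two-step controls of Lemmas \ref{FIRST_STEP}--\ref{SECOND_STEP}); this is exactly why the paper separates the smoothing-in-time lemma from the pointwise kernel bound, and why the convolution-stability question you flag is genuinely delicate. Starting from the (false) pointwise bound would make the iteration argument misleadingly simple. For the lower bound, your diagonal step (reading it off from $p = \tilde p + p\otimes H$ plus the diagonal lower bound on $\tilde p$) is the paper's; for the off-diagonal regime the paper uses a single controlled large jump from the compound Poisson part rather than a chain, but a chaining variant would also work.
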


With a small abuse of language, we refer to $p(t,s,x,y)$ as the density with respect to the Lebesgue measure.
We point out that in \textbf{[H-2]}, when $\sigma \in \R$, or when $\mu$ %the spherical part of $\nu$ 
is equivalent to the Lebesgue measure on $S^{d-1}$ this is actually a consequence of the H\"older continuity of $\sigma$, and the domination \textbf{[H-1]}.
However, in a very general setting, this is not a consequence of the H\"older continuity of $\sigma$, even in the case of the stable process.
Consider for instance $Z_t=(Z_t^1,Z_t^2)$, where $Z_t^i,i=1,2$ are two independent one dimensional stable processes.
In that case, $\mu = \delta_{e_1}+ \delta_{e_2}$, where $(e_1,e_2)$ forms the canonic base of $\R^2$.
Thus, the support of L\'evy measure of $Z$ is exactly the reunion $e_1 \R \cup e_2 \R$.
If we take $\sigma(t,x)$ to be a rotation. There exists $A\in \mathcal{B}(\R^d)$ and $(t,x)\in\R_+\times \R^d $ such that $\nu(A) =0$, but $\nu_t(x,A) \neq 0$, consequently \textbf{[H-5]} fails in this situation.

The condition \eqref{CLAIM_LOWER_BOUND} 
appearing for the lower bound comes from the possibly unbounded feature of the deterministic flow associated with \eqref{EDS1}.
Indeed, it states that if a neighborhood at the characteristic time scale of the flow stays in the sets of non degeneracy for $\nu$, then the lower bound holds.
Also, in \textbf{[H-LB]}, the assumption that $\sigma(t,x) \in A_{low}$ is a form of compatibility condition.
We know from Watanabe \cite{wata:07} that the spectral measure plays a key role in the density estimate.
This assumption ensures that the diffusion coefficient does not alter too much the spectral measure, so that the density estimates still holds.
Besides, as the lower bound only holds inside the set $A_{low}$, 
we obtained Aronson estimates only in those specific cases where $A_{low}=\R^d$.
Let us mention that \textbf{[H]} is satisfied in its entirety when considering a rotationally invariant stable process, or a relativistic stable process.
For the latter, the Fourier exponent writes $(|\zeta|^2+1)^{\alpha/2} -1$. In that case, we have $\gamma=d$, $A_{low}=\R^d$.
See the remark after Theorem 2 in Sztonyk \cite{szto:10}.

%The proof of Theorem \ref{MTHM} is given in Section \ref{parametrix_setting}.
%It uses basic computations from technical estimates established in Section \ref{Proof of the estimates}.
%The proof of Theorem \ref{MART_PB_THM} is given in Section \ref{Proof of the estimates}.
%It relies on sharp estimates on the Frozen density and the Parametrix kernel.
%We postponed the technical difficulties to Section \ref{Proof of the estimates} for more readability.

We point out that the upper bound \eqref{UB_DENS_SDE} is not the one established in \cite{szto:10}, rather a "degraded version" of it.
Formally, it means that some integrability is required in order to perform our techniques (precisely to correct a bad concentration index on the parametrix kernel).
Nevertheless, our approach allows to recover the existence and estimates on the density of \eqref{EDS1} when $(Z_t)_{t \ge 0}$ is a rotationally invariant stable process.
See the proof of Proposition \ref{EST_H_PROP} and Remark \ref{ROT_INV_STABLE_OK}.

Also, when $\textbf{[H-1a]}$ holds, the condition 
$$\mu \left(B(\theta,r) \cap S^{d-1} \right) \le C r^{\gamma -1},$$ actually holds with $\gamma=d$.
Besides, the function $Q$ appearing in the upper bound \eqref{UB_DENS_SDE} is decreasing
typically when considering tempering function of the form:
$$
\bar{q}(s) = \frac{1}{1+s^m},
$$
where $m$ is large enough.
Let us mention that when the spectral measure  $\mu$ has a density with respect to the Lebesgue measure on the sphere and the drift $F$ is bounded, we recover the results in  Kolokoltsov \cite{kolo:00}, without regularity assumptions.

\begin{remark}[On the constants]
We will often use the capital letter $C$ to denote a strictly positive constant that can depend on $T$ and the set of assumptions \textbf{[H]} and whose 
% is not important. Also, in the computations, 
 value of $C$ may change from line to line. Similarly, in the temperation, we will often write $\bar{q}(|x|)$ where we actually mean $\bar{q}(C|x|)$. 
Finally, we will use the symbol $\asymp$ to denote the equivalence:
$$
f\asymp g \Leftrightarrow \exists C>1, \ C^{-1} f(x) \le g(x) \le C f(x).
$$
\end{remark}
%\begin{remark}[Finite time horizon]
%In the rest, we fix $t\le T\le 1$.
%However, the  main results hold for any arbitrary, but finite time.
%Indeed, Theorem \ref{MART_PB_THM_TEMP} is extended to any time by the Markov property, and 
%Theorem \ref{MTHM_TEMP} by convolution arguments (see Lemma \ref{SEMI_GROUP_PROP}).
%\end{remark}

%\begin{remark}[Time-Homogeneous Coefficients]
%In this work, we have restricted ourselves to time-homogeneous coefficients.
%It turns out that our techniques can be used even in the time-dependent case.
%We choose to stick to the time homogeneous case for notational simplicity.
%\end{remark}

The rest of this paper is organized as follows.
In Section \ref{parametrix_setting}, we set up formally the parametrix technique, 
and give the estimates permitting the convergence of the parametrix series.
Section \ref{Proof of the estimates} is a technical section and is divided in five subsections.
First, in Subsection \ref{EST_FRZN_DENS_SECTION} we prove estimates on the frozen density.
In Subsection \ref{SECT_SMTH_H}, we investigate the parametrix kernel and its smoothing properties.
In Subsection \ref{PROOF_MART_PB}, we tackle the well-posedness of the martingale problem, using estimates provided by the two previous subsections.
Next, in Subsection \ref{PROOF_LEMME_IT_KER}, we prove the estimates giving the convergence of the parametrix series.
Finally, in Subsection \ref{proof_lower_bound}, we investigate the lower bound \eqref{LB_DENS_SDE}.

\section{The Parametrix Setting}
\label{parametrix_setting}

We present here a continuity technique known as the Parametrix.
Our approach is close to the one of Mc Kean and Singer \cite{mcke:sing:67}.
The strategy is to approximate the solution of \eqref{EDS1} by the solution of a simpler equation and control the distance in some sense between the two processes.
First of all, let us define the \textit{proxy} we will use.

Let $y\in \R^d$ be  an arbitrary point. 
When $F$ is only measurable and bounded, we take as frozen process $\tilde{X}_t = x+\int_0^t\sigma(u,y)Z_u$.

Assume $F$ to be Lipschitz continuous. 
Let $\theta_{t,s}$ be the flow associated with the deterministic differential equation:
$$
\frac{d}{dt} \theta_{t,s}(x) = F(t,\theta_{t,s}(x)), \ \theta_{s,s}(x)=x, \ 0\le t,s \le T.
$$
We will often refer to $\theta_{t,s}(y)$ as the transport of $y$ by the deterministic part of \eqref{EDS1}.
Recall $T$ is the deterministic time horizon. Fix $y\in \R^d$, a terminal point and $t \in [0,T]$, and $x \in \R^d$
initial time and position, we define the \textit{frozen process} $(\tilde{X}^{T,y}_s)_{s\in [t,T]}$ as:
\begin{eqnarray}\label{EDS_GEL}
\tilde{X}_s^{T,y} &=& x + \int_t^s F(u,\theta_{u,T}(y)) du + \int_t^s \sigma(u,\theta_{u,T}(y)) dZ_u.%\nonumber\\
% &=& x+\int_t^s F(u,\theta_{u,t}(y)) du+\sigma(\theta_{t,T}(y))(Z_s-Z_t).
\end{eqnarray}

We point out that the transport of the terminal point in the drift part comes for the unbounded character of the drift coefficient.
Also, in diffusion coefficient $\sigma$, the presence of the transport ensures the compatibility between the estimates on the frozen process and the parametrix kernel (see Propositions \ref{EST_FRZ_DENS_PROP} and \ref{EST_H_PROP}).
This technique of freezing along the curves of the deterministic system has first been introduced in Konakov Menozzi and Molchanov \cite{kona:meno:molc:10}, and has been formalised in Delarue and Menozzi \cite{dela:meno:10}.
In those works, the presence of the flow has another purpose, namely to account for the degenerate feature of their equations.
However, as a by-product, it allows to handle the unbounded feature of the drift in a non degenerate setting.
 %that the non local part of the generator of the frozen process stays similar to the corresponding part in the generator associated with \eqref{EDS1}.

 We could restrict ourselves to functions $F$ that are H\"older continuous. In this case, existence of the flow $\theta$ is given by the Cauchy Peano theorem. However, the lack of uniqueness poses the problem of the definition of $\theta_{t,s}$, so we decided to assume Lipschitz continuity instead.
Anyhow, in the case where $F$ is H\"older continuous, we expect some kind of regularization by the noise, as we recover weak uniqueness, see e.g. Bafico and Baldi \cite{bafi:bald:82}, or Delarue and Flandoli
\cite{dela:flan:14} for recent developments.

It is clear from the definition of $(\tilde{X}^{T,y}_s)_{s\in [t,T]}$ and assumptions \textbf{[H-1]} (domination of the L\'evy measure) and \textbf{[H-2]} (non degeneracy of the spectral measure) that $\tilde X^{T,y}$ has a density with respect to the Lebesgue measure.
We denote the latter: 
$$
\PP (\tilde{X}^{T,y}_s \in dz | \tilde{X}_t^{T,y} = x)= \tilde{p}^{T,y}(t,s,x,z)dz, \ s\in(t,T].
$$

To get an explicit representation for it, we proceed by a Fourier inversion.
Observe first that the Fourier transform of $\tilde{X}^{T,y}_s$ actually writes:
$$
\E(e^{i \langle p, \tilde{X}^{T,y}_s\rangle }) = e^{i \langle p, x + \int_t^s F(u,\theta_{u,T}(y))du \rangle}
\exp \left( \int_t^s \varphi_Z(\sigma(u,\theta_{u,T}(y))^*p )\right),
$$
where we denoted by $\sigma(u,\theta_{u,T}(y))^*$ the transpose of $\sigma(u,\theta_{u,T}(y))$, and 
$\varphi_Z$ is the L\'evy-Kintchine exponent of $Z$.
Due to assumptions \textbf{[H-1]} and \textbf{[H-2]}, this exponent is integrable and the frozen density actually writes:

\begin{eqnarray}\label{DENS_GEL_REP}
\tilde{p}^{T,y}(t,s,x,z) &=& \frac{1}{(2\pi)^d} \int_{\R^d}dp e^{-i \langle p, z- x - \int_t^s F(u,\theta_{u,T}(y))du \rangle} \nonumber\\
&&\times \exp \left( \int_t^s du\int_{\R^d} e^{ i\langle p, \sigma(u,\theta_{u,T}(y)) \xi \rangle } -1 - i\frac{ \langle p, \sigma(u,\theta_{u,T}(y)) \xi \rangle}{1+|\xi|^2} \nu(d\xi) \right).
\end{eqnarray}

%Recall that we have denoted by $p_Z(t,x)$ the density of the driving process $Z$. The frozen density relates to the density of $Z$ through the relation:
%\begin{equation}\label{DENS_GEL_REP}
%\tilde{p}^{T,y}(t,s,x,z) = \det \sigma(\theta_{t,T}(y))^{-1} p_Z\left(s-t,\sigma \big(\theta_{t,T}(y) \big)^{-1}\Big(z-\int_t^s F(\theta_{u,t}(y)) du-x \Big)\right).
%\end{equation}
We will often denote $\tilde{p}(t,T,x,y) =\tilde{p}^{T,y}(t,T,x,y)$, namely, we omit the superscript $T,y$ when the freezing parameters and the points where the density is considered are the same.
Observe that in this case, we have
\begin{equation}\label{FLOW_REVERSE_OK}
y-\int_t^T F(u,\theta_{u,T}(y)) du-x= \theta_{t,T}(y)-x.
\end{equation}

%%%%%%%%%%%%%%%%%%%%%PARAMETRIX VERSION SEMI GROUP

\begin{proposition}%\label{serie parametrix}
Assume that the semigroup $(P_{s,t})_{0 \le s,t \le T}$ generated by the solution $(X_s)_{s \in [t,T]}$ to \eqref{EDS1} is Feller.
%Suppose that there exists a unique weak solution $(X_s)_{s \in [t,T]}$ to \eqref{EDS1} which  has a Feller semigroup $(P_{s,t})_{0 \le s,t \le T}$. 
We have the following \textit{formal} representation. For all $t > 0,\ (x,y)\in (\R^{d})^2 $ and any bounded measurable $f:\R^{d}\rightarrow \R$:
\begin{equation}
\label{SG_PARAM}
P_{T,t}f(x)=\E[f(X_T)|X_t=x]=\int_{\R^{d}} \left(\sum_{r=0}^{+\infty} (\tilde{p} \otimes H^{(r)}) (t,T,x,y) \right)f(y)dy,
\end{equation}
where $H$ is the parametrix kernel:
\begin{equation}%\label{noyau}
\forall 0 \le t \le s \le T, \ (x,y)\in (\R^{d})^2,\ H(t,T,x,y) := (L_t(x,\nabla_x) - L_t(\theta_{t,T}(y),\nabla_x) )\tilde{p}^{T,y}(t,T,x,y).
\end{equation}
The notation $\otimes  $ stands for the time space convolution: 
$$
f \otimes g(t,T,x,y) = \int_t^T du\int_{\R^{d}} dz f(t,u,x,z)g(u,T,z,y).
$$
Besides, $\tilde{p} \otimes H^{(0)} = \tilde{p}$ and $\forall r \in \N, \ H^{(r)} (t,T,x,y) =H^{(r-1)} \otimes H(t,T,x,y)$.

Furthermore, when the above representation can be justified, it yields the existence as well as a representation for the density of the initial process. Namely $\PP[X_T\in dy|X_t=x]=p(t,T,x,y)dy $ where :
\begin{equation} \label{SERIE_PARAM_TEMP}
\forall t> 0,\ (x,y)\in (\R^{d})^2,\ p(t,T,x,y) = \sum_{r=0}^{+\infty} (\tilde{p} \otimes H^{(r)}) (t,T,x,y).
\end{equation}
\end{proposition}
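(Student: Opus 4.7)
The plan is to derive at the kernel level the Duhamel-type identity
\[ p(t,T,x,y) = \tilde p(t,T,x,y) + (p \otimes H)(t,T,x,y), \]
and then iterate it to obtain \eqref{SERIE_PARAM_TEMP}. First, under \textbf{[H-1]} and \textbf{[H-2]} the Fourier inversion \eqref{DENS_GEL_REP} makes $\tilde p^{T,y}(t,T,x,z)$ smooth in $(t,x)$, and by direct differentiation it solves the backward Kolmogorov equation associated with the frozen generator, namely
\[ \partial_t \tilde p^{T,y}(t,T,x,z) + L_t(\theta_{t,T}(y),\nabla_x)\tilde p^{T,y}(t,T,x,z) = 0, \quad t < T, \]
with terminal condition $\tilde p^{T,y}(T,T,\cdot,z) = \delta_z$.

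Next, I fix a test function $f \in \mathcal C^{\infty}_c(\R^d)$ and a freezing point $y\in\R^d$, and consider
\[ \Phi(u,x) := P_{u,t}\bigl(\tilde P^{T,y}_{T,u}f\bigr)(x), \quad u\in [t,T], \]
which interpolates between $\Phi(t,x) = \tilde P^{T,y}_{T,t}f(x)$ and $\Phi(T,x) = P_{T,t}f(x)$. Thanks to the Feller property of $(P_{s,u})$ and to the smoothness of $\tilde P^{T,y}_{T,u}f$ just established, differentiation combined with the Kolmogorov equation for $\tilde p^{T,y}$ gives
\[ \frac{d}{du}\Phi(u,x) = P_{u,t}\bigl((L_u(\cdot,\nabla) - L_u(\theta_{u,T}(y),\nabla))\tilde P^{T,y}_{T,u}f\bigr)(x). \]
Integrating on $[t,T]$ yields the semigroup-level identity
\[ P_{T,t}f(x) - \tilde P^{T,y}_{T,t}f(x) = \int_t^T P_{u,t}\bigl((L_u(\cdot,\nabla)-L_u(\theta_{u,T}(y),\nabla))\tilde P^{T,y}_{T,u}f\bigr)(x)\,du. \]
Reading this at the kernel level and fixing the freezing point $y$ equal to the evaluation point of the density, as is customary in the parametrix method (and compatible with \eqref{FLOW_REVERSE_OK}), one recovers the announced Duhamel identity $p = \tilde p + p \otimes H$.

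Iterating this identity $n$ times gives
\[ p(t,T,x,y) = \sum_{r=0}^{n-1}(\tilde p \otimes H^{(r)})(t,T,x,y) + (p \otimes H^{(n)})(t,T,x,y), \]
and once the remainder $p \otimes H^{(n)}$ vanishes as $n \to \infty$, the representation \eqref{SERIE_PARAM_TEMP} follows; \eqref{SG_PARAM} is then obtained by integration against $f$.

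The main obstacle, and the reason for the "formal" label in the statement, lies in the control of the parametrix kernel $H$ and of the iterated convolutions $H^{(r)}$. The kernel inherits a time singularity from the difference of the two generators acting on $\tilde p^{T,y}$, which must be tamed using the H\"older continuity of $\sigma$ (through \textbf{[H-3]} and \textbf{[H-5]}) and Sztonyk-type pointwise bounds on $\tilde p^{T,y}$. This smoothing, together with the estimate that guarantees the vanishing of the remainder, is precisely what the subsequent Subsections \ref{SECT_SMTH_H} and \ref{PROOF_LEMME_IT_KER} are devoted to; once they are in place, the formal series representation becomes rigorous, and meanwhile the density $p$ is defined via \eqref{SERIE_PARAM_TEMP} as the candidate for the transition density of \eqref{EDS1}.
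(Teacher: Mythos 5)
Your proof is essentially correct as a formal manipulation, but the route you take is genuinely different from the paper's and hides a subtle circularity that is worth isolating.

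On the one hand, you fix the freezing parameter $y$ once and for all and interpolate between $P_{T,t}f$ and $\tilde P^{T,y}_{T,t}f$ via $\Phi(u,x)=P_{u,t}\bigl(\tilde P^{T,y}_{T,u}f\bigr)(x)$. This $\tilde P^{T,y}$ (fixed $y$) is a \emph{genuine} two-parameter semigroup, so the endpoint $\Phi(T,x)=P_{T,t}f(x)$ comes directly from the terminal condition $\tilde p^{T,y}(T,T,\cdot,z)=\delta_z(\cdot)$ in \eqref{CHAP_KOL_BWD}. The paper instead introduces the $y$-integrated pseudo-operator $\tilde P_{t,s}f(x)=\int \tilde p^{T,y}(t,T,x,y)f(y)\,dy$, which it explicitly notes is \emph{not} a semigroup, so the Dirac boundary condition \eqref{LIM_SG_TILDE} is not automatic and is the content of Lemma \ref{convergence_dirac_temp}. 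Your route sidesteps that lemma, which is a genuine simplification at the formal level.

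On the other hand, once you have the semigroup-level Duhamel identity, you pass to kernels by taking $f$ to be a Dirac mass and then setting the freezing parameter equal to the evaluation point, writing $p=\tilde p + p\otimes H$ and iterating with remainder $p\otimes H^{(n)}$. This step presupposes that the transition kernel $P_{u,t}$ at \emph{intermediate} times $u\in(t,T)$ already has a density $p(t,u,x,z)$; but the existence of that density is precisely part of what the proposition is meant to produce. The paper avoids this circularity by never leaving the operator level: it iterates $P_{t,T}f = \tilde P_{t,T}f + \int_t^T P_{t,u}(\mathcal H_{u,T}f)\,du$ on the \emph{inner} $P_{t,u}$, and only the frozen densities $\tilde p^{T,y}$ (whose existence is already established via Fourier inversion) appear in the identification of the iterates with $\int f(y)\,\tilde p\otimes H^{(r)}(t,T,x,y)\,dy$. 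The end product $P_{T,t}f(x)=\int\bigl(\sum_r \tilde p\otimes H^{(r)}\bigr)(t,T,x,y)f(y)\,dy$ is then what, once the series is shown to converge, \emph{yields} both the existence of the density and its representation. Your derivation delivers only the consistency relation $p=\sum_r \tilde p\otimes H^{(r)}$ under the extra hypothesis that $p$ exists, and a separate argument would be needed to upgrade it to an existence statement. Since the statement's final clause ("when the above representation can be justified, it yields the existence as well as a representation") is precisely this upgrade, you should keep the iteration at the operator level, as the paper does, rather than at the kernel level.
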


\begin{proof}
Let us first emphasize that the density $\tilde{p}^{T,y}(t,s,x,z)$ of $\tilde X_{s}^{t,x,T,y} $ at point $z$ solves the Kolmogorov \textit{backward} equation:
\begin{equation}\label{CHAP_KOL_BWD}
\frac{\partial \tilde{p}^{T,y}}{\partial t}(t,s,x,z) = -L_t(\theta_{t,T}(y), \nabla_x) \tilde{p}^{T,y}(t,s,x,z), \mbox{ for all $t<s$, $(x,z) \in \R^{nd}\times \R^{nd}$, $\lim_{t\uparrow s}\tilde{p}^{T,y}(t,s,\cdot,z) = \delta_{z}(\cdot)$}.
\end{equation}
Here, the notations $\nabla_x$ highlights the fact that $L_t(\theta_{t,T}(y), \nabla_x)$ acts on the variable $x$. Let us now introduce the family of operators $(\tilde P_{t,s})_{0\le t\le s} $. For $0\le t\le s $ and any bounded measurable function $f:\R^{nd}\rightarrow \R$:
\begin{eqnarray}
\label{SG_TILDE}
\tilde P_{t,s}f(x):=\int_{\R^{nd}}\tilde p(t,T,x,y)f(y)dy:=\int_{\R^{nd}}\tilde p^{T,y}(t,T,x,y)f(y)dy.
\end{eqnarray}
Observe that the family $(\tilde P_{t,s})_{0\le t\le s}  $ is not a two-parameter semigroup, because of the integration with respect to the freezing parameter $y$. Anyhow, we can still establish, see Lemma \ref{convergence_dirac_temp}, that for a continuous $f$:
\begin{equation}
\label{LIM_SG_TILDE}
\lim_{s\rightarrow t}\tilde P_{s,t}f(x)=f(x).
\end{equation}
This convergence is not a direct consequence of the bounded convergence theorem since the freezing parameter is also the integration variable.
It rather follows from the estimates on the Frozen density $\tilde p^{T,y}(t,T,x,y)$.

The boundary condition \eqref{LIM_SG_TILDE} and the Feller property yield:
%Hence, due to the convergences to the Dirac masses for $p$ and $\tilde{p}_\alpha$, we can deduce the following equality:
$$
(P_{t,T}-\tilde P_{t,T})f(x) = \int_t^T du \frac{\partial}{\partial u}\biggl\{P_{t,u}(\tilde P_{u,T}f(x))\biggr\}.
$$
Computing the derivative under the integral leads to:
$$
(P_{t,T}-\tilde P_{t,T})f(x) = \int_t^T du \biggl\{\partial_u P_{t,u}(\tilde P_{u,T}f(x))+P_{t,u}(\partial_u(\tilde P_{u,T}f(x)))\biggr\}.
$$
Using the Kolmogorov equation \eqref{CHAP_KOL_BWD} and the Chapman-Kolmogorov relation $\partial_{u}P_{t,u}\varphi(x) =P_{t,u}(L_u(x,\nabla_x) \varphi(x)),\ \forall \varphi\in C_b^{2}(\R^{nd},\R) $  we get:
\begin{eqnarray*}
(P_{t,T}-\tilde P_{t,T})f(x) = \int_t^T du  P_{t,u}\Big(L_u(x,\nabla_x) \tilde{P}_{u,T}f\Big)(x) - P_{t,u}\left(\int_{\R^{nd}}f(y)  L_u(\theta_{u,T}(y),\nabla_x)\tilde p(u,T,\cdot,y) dy \right)(x).
\end{eqnarray*}
Define now the operator:
\begin{equation}
\label{H_CUR}
\mathcal{H}_{u,T}(\varphi)(z) := \int_{\R^{nd}}\varphi(y) \Big(L_u(x,\nabla_x)-L_u(\theta_{u,T}(y),\nabla_x) \Big)\tilde p(u,T,z,y)dy=\int_{\R^{nd}}\varphi(y) H(u,T,z,y)dy.
\end{equation}
We can thus rewrite:
$$
P_{t,T}f(x) = \tilde P_{t,T}f(x) + \int_t^T P_{t,u}\big(\mathcal{H}_{u,T}(f) \big)(x)du.
$$
The idea is now to reproduce this procedure for $P_{t,u} $ applied to $\mathcal{H}_{u,T}(f) $.
This recursively yields the formal representation:  
$$
P_{t,T}f(x) = \tilde P_{t,T}f(x)+ \sum_{r \ge 1} \int_t^T du_1 \int_t^{u_1} du_2 \dots \int_t^{u_{r-1}} du_r \tilde{P}_{t,u_r}
\big(\mathcal{H}_{u_r,u_{r-1}}\circ \dots \circ \mathcal{H}_{u_1,T}\big)(f)(x).
$$
Equation \eqref{SG_PARAM} then formally follows from the following identification. For all $r\in \N^*$:
 $$
 \int_t^T du_1 \int_t^{u_1} du_2 \dots \int_t^{u_{r-1}} du_r \tilde{P}_{t,u_r}
\big(\mathcal{H}_{u_r,u_{r-1}}\circ \dots \circ \mathcal{H}_{u_1,T}\big)(f)(x)du = \int_{\R^{nd}}f(y) \tilde p \otimes H^{(r)}(t,T,x,y)dy.
 $$
 
We can proceed by immediate induction:
 \begin{eqnarray}
 &&\int_t^T du_1 \int_t^{u_1} du_2 \dots \int_t^{u_{r-1}} du_r \tilde{P}_{t,u_r}
\big(\mathcal{H}_{u_r,u_{r-1}}\circ \dots \circ \mathcal{H}_{u_1,T}\big)(f)(x)du \nonumber\\
&=& 
\int_t^T du_1 \int_t^{u_1} du_2 \dots \int_t^{u_{r-1}} du_r \int_{\R^{nd}}dz\mathcal{H}_{u_r,u_{r-1}}\circ \dots \circ \mathcal{H}_{u_1,T}(f)(z) \tilde p(t,u_{r},x,z) \label{POUR_BOUCLER}\\
&\overset{\eqref{H_CUR}}{=}&\int_t^T du_1 \int_t^{u_1} du_2 \dots \int_t^{u_{r-1}} du_r \int_{\R^{nd}}dz\int_{\R^{nd}}dy \mathcal{H}_{u_{r-1},u_{r-2}}\circ \dots \circ \mathcal{H}_{u_1,T}(f)(y) H(u_r,u_{r-1},z,y)\tilde p(t,u_{r},x,z) \nonumber \\
&=&\int_t^T du_1 \int_t^{u_1} du_2 \dots \int_t^{u_{r-2}} du_{r-1}\int_{\R^{nd}}dy \mathcal{H}_{u_{r-1},u_{r-2}}\circ \dots \circ \mathcal{H}_{u_1,T}(f)(y) \tilde p_\alpha \otimes H (t,u_{r-1},x,y) .\nonumber
 \end{eqnarray}
 
Thus, we can iterate the procedure from \eqref{POUR_BOUCLER} with $\tilde p \otimes H$ instead of $\tilde p$.

\end{proof}

The existence of the density for the solution of \eqref{EDS1} will follow from the convergence of the parametrix series.
In the following, we will denote
\begin{equation}
\bar{p}(t,T,x,y) = \frac{(T-t)^{-d/\alpha}}{\left( 1+ \frac{|y-\theta_{T,t}(x)|}{(T-t)^{1/\alpha}}\right)^{\alpha+\gamma}} \bar{q}(|y-\theta_{T,t}(x)|).
\end{equation}
This is the upper bound on the Frozen density under \textbf{[H]} derived by Sztonyk \cite{szto:10}, adapted to our possible unbounded drift case.
We prove that this upper bound holds for the frozen density in Section \ref{Proof of the estimates}.

The following lemma proves the convergence of the series \eqref{SERIE_PARAM_TEMP}.

\begin{lemma}[\textbf{Control of the iterated kernels}]\label{LEMME_IT_KER_TEMP}
There exist $C_{\ref{LEMME_IT_KER_TEMP}}=C_{\ref{LEMME_IT_KER_TEMP}}(T) >0$, $\omega\in (0,1]$ s.t.
for all $t\in [0,T]$, $(x,y)\in (\R^{d})^2 $: 
\begin{eqnarray}
|\tilde p\otimes H(t,T,x,y)|&\le &\frac{C_{\ref{LEMME_IT_KER_TEMP}}}{\omega}\Big ( (T-t)^\omega \bar {p} (t,T,x,y)+ \rho(t,T,x,y) \Big), \label{EST_LEMME_IT_KER_1}\\
| \rho \otimes H(t,T,x,y)|&\le &\frac{C_{\ref{LEMME_IT_KER_TEMP}}}{\omega}  (T-t)^\omega  \bar {p}(t,T,x,y), \label{EST_LEMME_IT_KER_2}
\end{eqnarray}
where we denoted
$\rho(t,T,x,y) = \delta \wedge |x-\theta_{t,T}(y)|^{\eta(\alpha \wedge 1)} \bar {p}(t,T,x,y)$.
Now for all $k\ge 1$,
\begin{eqnarray}
|\tilde p\otimes H^{(2k)}(t,T,x,y)|\le (C_{\ref{LEMME_IT_KER_TEMP}})^{2k} \frac{(T-t)^{k\omega}}{k! \omega^{2k}} \Big( (T-t)^{k\omega} \bar {p}(t,T,x,y) + (\bar {p} + \rho)(t,T,x,y) \Big), \label{EST_LEMME_IT_KER_3} \\
|\tilde p\otimes H^{(2k+1)}(t,T,x,y)|\le (C_{\ref{LEMME_IT_KER_TEMP}})^{2k+1} \frac{(T-t)^{k\omega}}{(k+1)! \omega^{2k+1}} \Big( (T-t)^{(k+1)\omega} \bar {p}+ (T-t)^{\omega'}(\bar {p} +\rho)+ \rho \Big)(t,T,x,y), \label{EST_LEMME_IT_KER_4}
\end{eqnarray}
where we set $\omega' = \omega\ind_{T-t\le 1} + k\omega \ind_{T-t >1}>0$. 
\end{lemma}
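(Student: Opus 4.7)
The starting point is the upper bound on the parametrix kernel $H$ supplied by Proposition \ref{EST_H_PROP} (subsection \ref{SECT_SMTH_H}). I would use a bound of the form
$$
|H(u,T,z,y)| \le \frac{C}{T-u}\Big((T-u)^\omega \bar{p}(u,T,z,y) + \rho(u,T,z,y)\Big)
$$
for some $\omega\in(0,1]$: the $(T-u)^{-1}$ factor comes from the second order operator acting on the frozen density $\tilde p$ at the natural scale $(T-u)^{1/\alpha}$, while the $(T-u)^\omega$ gain and the additional $\rho$ term come from the Hölder regularity of $\sigma$ that governs the difference $L_u(\cdot,\nabla_x)-L_u(\theta_{u,T}(y),\nabla_x)$ in the kernel $H$.

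For \eqref{EST_LEMME_IT_KER_1}, I would substitute the bound $\tilde p \le C\bar p$ from Proposition \ref{EST_FRZ_DENS_PROP} and write
$$
|\tilde p\otimes H(t,T,x,y)| \le C\int_t^T \frac{du}{T-u}\int_{\R^d} dz\,\bar p(t,u,x,z)\big((T-u)^\omega \bar p(u,T,z,y) + \rho(u,T,z,y)\big).
$$
Two ingredients are needed. The first is a space Chapman--Kolmogorov type inequality $\int_{\R^d} \bar p(t,u,x,z)\bar p(u,T,z,y)\,dz \le C\,\bar p(t,T,x,y)$, which rests on the doubling property \eqref{DOUBLING_TEMP} of $\bar q$ and on the behaviour of the flow $\theta$. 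The second is a weighted variant: the Hölder factor $|z-\theta_{u,T}(y)|^{\eta(\alpha\wedge 1)}\wedge\delta$ is bounded by a macroscopic contribution $|x-\theta_{t,T}(y)|^{\eta(\alpha\wedge 1)}\wedge \delta$ plus a diffusive one $\asymp (u-t)^{\eta(\alpha\wedge 1)/\alpha}$, yielding after integration a term of order $\rho(t,T,x,y) + (u-t)^{\eta(\alpha\wedge 1)/\alpha}\bar p(t,T,x,y)$. The remaining time integration of $(T-u)^{-1+\omega}$ gives the $(T-t)^\omega/\omega$ factor in \eqref{EST_LEMME_IT_KER_1}. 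For \eqref{EST_LEMME_IT_KER_2}, the weight $\rho$ on the left already encodes the Hölder decay: convolving it with $H$ cannot regenerate a $\rho$ term, only $(T-t)^\omega\bar p$, so the same scheme produces \eqref{EST_LEMME_IT_KER_2}.

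Estimates \eqref{EST_LEMME_IT_KER_3} and \eqref{EST_LEMME_IT_KER_4} then follow by induction on $k$, writing $\tilde p\otimes H^{(r+1)} = (\tilde p\otimes H^{(r)})\otimes H$ and plugging the induction hypothesis into the base estimates \eqref{EST_LEMME_IT_KER_1}--\eqref{EST_LEMME_IT_KER_2} term by term. At each step the time integration is of Beta type: $\int_t^T (u-t)^{k\omega}(T-u)^{\omega-1}du = B(k\omega+1,\omega)(T-t)^{(k+1)\omega}$ with $B(k\omega+1,\omega)\asymp (k+1)^{-1}$, and after $r$ iterations one recovers the $1/k!$ weight. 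The alternation between $\bar p$ and $\rho$ terms in the iteration is what forces the even/odd dichotomy in the statement, and the definition of $\omega'$ simply tracks whether the $(T-t)^\omega$ factor from the freshest convolution can be absorbed into the already accumulated $(T-t)^{k\omega}$ (large time regime $T-t>1$) or must be kept separate (small time regime $T-t\le 1$).

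The principal technical obstacle is the spatial Chapman--Kolmogorov estimate $\bar p\star_z\bar p\le C\bar p$, together with its Hölder-weighted variant. It requires a case analysis on the relative sizes of $(u-t)^{1/\alpha}$, $(T-u)^{1/\alpha}$ and $|y-\theta_{t,T}(x)|$, careful handling of the flow transport $\theta$, and a non trivial control of the product of the tempering factors $\bar q$ via the doubling property \eqref{DOUBLING_TEMP} (or the Lebesgue density assumption of \textbf{[H-1a]}). Once these spatial estimates are in place the rest of the proof is a bookkeeping of Beta function identities.
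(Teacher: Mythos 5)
Your proposal matches the paper's own proof in essence: the argument rests on the kernel bound from Proposition \ref{EST_H_PROP}, the semigroup-type spatial convolution estimate for $\bar p$ (Lemma \ref{SEMI_GROUP_PROP}) together with its H\"older-weighted refinement (the preliminary estimates \eqref{EST_PREAL_1}--\eqref{EST_PREAL_2}, proved via Lemmas \ref{FIRST_STEP}, \ref{SECOND_STEP} and the smoothing Lemma \ref{SMTH_H}), followed by a Beta-function induction producing the $1/k!$ decay. One small imprecision worth noting: the H\"older weight $\delta\wedge|z-\theta_{u,T}(y)|^{\eta(\alpha\wedge 1)}$ is not pointwise bounded by a macroscopic term plus a $(u-t)^{\eta(\alpha\wedge1)/\alpha}$ term --- the triangle inequality yields the macroscopic term plus $\delta\wedge|\theta_{t,u}(z)-x|^{\eta(\alpha\wedge 1)}$, and the $(u-t)^\omega$ gain only materializes after integrating that factor against $\bar p(t,u,x,\cdot)$, exactly as in Lemma \ref{SMTH_H}.
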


The above controls allow to derive under the sole assumption $\H$ the convergence of the parametrix series (thus, existence of the density for the solution of \eqref{EDS1}), and the upper bound \eqref{UB_DENS_SDE} for the sum of the parametrix series \eqref{SERIE_PARAM_TEMP}. % in small time.
%We prove the lower bound in Subsection \ref{proof_lower_bound} .
%To extend the result to any arbitrary (but finite) time, we use the semi-group property satisfied by $\bar{p}(t,T,x,y)$  (see Lemma \ref{SEMI_GROUP_PROP}).
%We point out that this procedure yields exponential dependencies in time in the constants.
%It is possible however to obtain the convergence of the series \eqref{SERIE_PARAM_TEMP} for any time from Lemma \ref{LEMME_IT_KER_TEMP}, by estimating separately the two integrals (in time and space) in the time space convolution $\otimes$.
%This more technical procedure, yielding better, yet still exponentially explosive constants, is developed in Kolokolstov \cite{kolo:00}.

\begin{proof}
Let us denote by
$$\bar{H}(t,T,x,y) = \frac{\delta\wedge |x-\theta_{t,T}(y)|^{\eta(\alpha \wedge 1)} }{T-t} \bar{p}(t,T,x,y).$$

We prove in Section \ref{SECT_SMTH_H} that $\bar{H}$ is actually the upper bound for the kernel $H$.
The proof of Lemma \ref{LEMME_IT_KER_TEMP} relies on the two following estimates:
\begin{eqnarray}
&&\int_{\R^d} \bar{p}(t,u,x,z) \bar{H}(u,T,z,y) dz 
\le C_0 \Big( (T-u)^{\omega-1} + (u-t)^{\omega-1}+ \frac{\delta\wedge |x-\theta_{t,T}(y)|^{\eta(\alpha \wedge 1)} }{T-t}\Big) \bar{p}(t,T,x,y) \label{EST_PREAL_1}\nonumber\\
\\
&&\int_{\R^d} \rho(t,u,x,z)  \bar{H}(u,T,z,y) dz
\le C_0\Big( (T-u)^{\omega-1} + (u-t)^{\omega-1}\Big) \bar{p}(t,T,x,y) \label{EST_PREAL_2}.
\end{eqnarray}

Integrating these estimates in time yields \eqref{EST_LEMME_IT_KER_1} and  \eqref{EST_LEMME_IT_KER_2}, setting $C_{\ref{LEMME_IT_KER_TEMP}} = 2C_0$.
We postpone the proof of these important estimates in Section \ref{Proof of the estimates}, as it is technical and relies on sharp estimates on the frozen density and on the parametrix kernel (see Lemmas \ref{FIRST_STEP} and \ref{SECOND_STEP}).
Assuming estimates \eqref{EST_PREAL_1} and  \eqref{EST_PREAL_2}, 
we prove estimates  \eqref{EST_LEMME_IT_KER_3} and  \eqref{EST_LEMME_IT_KER_4} by induction.
%The bounds may not be very precise, as we will sometimes bound $t^{k\omega} \le 1$, but they are sufficient to prove the convergence of the Parametrix series \eqref{SERIE_PARAM_TEMP}.

\textbf{Initialization:}\\
Since $(T-t)^{\omega'}(\bar p + \rho)\geq 0$, we clearly have:
$$
|\tilde p\otimes H(t,T,x,y) | \leq C_{\ref{LEMME_IT_KER_TEMP}} \Big((T-t)^\omega \bar {p}+\rho + (T-t)^\omega(\bar {p}+ \rho) \Big)(t,T,x,y).
$$
We now turn to the estimate for $|\tilde{p} \otimes H^{(2)}|$.
Starting from \eqref{EST_LEMME_IT_KER_1}, we use  equations \eqref{EST_PREAL_1} and \eqref{EST_PREAL_2}, we have to write:

\begin{eqnarray*}
|\tilde{p} \otimes H^{(2)}(t,T,x,y)| &\leq& 
\frac{C_{\ref{LEMME_IT_KER_TEMP}}}{\omega} \int_t^T du (u-t)^{\omega} \int_{\R^d} \bar{p}(t,u,x,z) \frac{\delta\wedge |x-\theta_{u,T}(y)|^{\eta(\alpha \wedge 1)}}{T-u} \bar{p}(u,T,z,y) dz\\
&&+\frac{C_{\ref{LEMME_IT_KER_TEMP}}}{\omega}  \int_t^T du \int_{\R^d}\rho(t,u,x,z)\frac{\delta\wedge |x-\theta_{u,T}(y)|^{\eta(\alpha \wedge 1)}}{T-u} \bar{p}(u,T,z,y) dz\\
&\le& \frac{C_{\ref{LEMME_IT_KER_TEMP}}}{\omega}  C_0 \int_t^T du (u-t)^{\omega} \Big( (T-u)^{\omega-1} + (u-t)^{\omega-1}+ \frac{\delta\wedge |x-\theta_{t,T}(y)|^{\eta(\alpha \wedge 1)} }{T-t}\Big) \bar{p}(t,T,x,y)\\
&&+\frac{C_{\ref{LEMME_IT_KER_TEMP}}}{\omega}   C_0 \left(\int_t^T du (T-u)^{\omega-1} + (u-t)^{\omega-1} \right) \bar{p}(t,T,x,y)\\ 
&\le& \frac{C_{\ref{LEMME_IT_KER_TEMP}}}{\omega}C_0 \biggl[ \Big(B(\omega+1,\omega) + \frac{1}{2\omega} \Big) (T-t)^{2\omega} \bar{p}(t,T,x,y) + \frac{(T-t)^\omega}{\omega+1} \rho(t,T,x,y) + 2\frac{(T-t)^\omega}{\omega} \bar{p}(t,T,x,y) \biggr]\\
&\le& \frac{(C_{\ref{LEMME_IT_KER_TEMP}})^2}{\omega^2}(T-t)^\omega \Big( (T-t)^\omega\bar{p} + (\bar{p}+\rho)  \Big)(t,T,x,y).
% C_{\ref{LEMME_IT_KER_TEMP}} \Big((T-t)^\omega| \bar {p}\otimes H| + |\rho  \otimes H| \Big)(t,T,x,y)\\
%&\le& C_{\ref{LEMME_IT_KER_TEMP}} \Big(C_{\ref{LEMME_IT_KER_TEMP}}(T-t)^{2\omega}\bar {p} + C_{\ref{LEMME_IT_KER_TEMP}}(T-t)^\omega \rho + C_{\ref{LEMME_IT_KER_TEMP}}(T-t)^\omega \bar {p}  \Big)(t,T,x,y)\\
%&\le& (2C_{\ref{LEMME_IT_KER_TEMP}})^2(T-t)^\omega  \Big((T-t)^{\omega}\bar {p}+ (\bar {p} + \rho) \Big)(t,T,x,y).
\end{eqnarray*}

To get to the last estimate, observe that the biggest contribution amongst the constant multipliers is $\frac{2}{\omega}$.
We also recall that  $C_{\ref{LEMME_IT_KER_TEMP}}= 2C_0$.

%\begin{eqnarray*}
%|\tilde{p} \otimes H^{(2)}(t,T,x,y)| &\leq&  C_{\ref{LEMME_IT_KER_TEMP}} \Big((T-t)^\omega| \bar {p}\otimes H| + |\rho  \otimes H| \Big)(t,T,x,y)\\
%&\le& C_{\ref{LEMME_IT_KER_TEMP}} \Big(C_{\ref{LEMME_IT_KER_TEMP}}(T-t)^{2\omega}\bar {p} + C_{\ref{LEMME_IT_KER_TEMP}}(T-t)^\omega \rho + C_{\ref{LEMME_IT_KER_TEMP}}(T-t)^\omega \bar {p}  \Big)(t,T,x,y)\\
%&\le& (2C_{\ref{LEMME_IT_KER_TEMP}})^2(T-t)^\omega  \Big((T-t)^{\omega}\bar {p}+ (\bar {p} + \rho) \Big)(t,T,x,y).
%\end{eqnarray*}

\textbf{Induction}:\\
Suppose that the estimate for $2k$ holds. Let us prove the estimate for $2k+1$.
Recalling that $\bar{p}$ and $\bar{H}$ are the upper bounds for $\tilde{p}$ and  $H$ respectively, we write:
\begin{eqnarray*}
&&|\tilde{p}\otimes H^{(2k+1)}(t,T,x,y)|\\
&\le& \frac{(C_{\ref{LEMME_IT_KER_TEMP}})^{2k}}{k! \omega^{2k}} \left( \int_t^T du (u-t)^{2k\omega+1} \int_{\R^d} \bar{p}(t,u,x,z) \bar{H}(u,T,z,y) dz
+\int_t^T du (u-t)^{k\omega} \int_{\R^d} (\bar{p}+ \rho)(t,u,x,z)\bar{H}(u,T,z,y) dz \right)\\
&\le&  \frac{(C_{\ref{LEMME_IT_KER_TEMP}})^{2k}}{k! \omega^{2k+1} }  
\int_t^T du (u-t)^{2k\omega} C_0 \Big( (T-u)^{\omega-1} + (u-t)^{\omega-1}+ \frac{\delta\wedge |x-\theta_{t,T}(y)|^{\eta(\alpha \wedge 1)} }{T-t}\Big) \bar{p}(t,T,x,y) \\
&&+\frac{(C_{\ref{LEMME_IT_KER_TEMP}})^{2k}}{k! \omega^{2k+1} }  \int_t^T du (u-t)^{k\omega}C_0 \Big(  2((T-u)^{\omega-1} + (u-t)^{\omega-1})+ \frac{\delta\wedge |x-\theta_{t,T}(y)|^{\eta(\alpha \wedge 1)} }{T-t}\Big) \bar{p}(t,T,x,y)\\
&\le& \frac{(C_{\ref{LEMME_IT_KER_TEMP}})^{2k}}{k! \omega^{2k} } C_0\left[ \Big( B(2k\omega+1,\omega) +\frac{1}{(2k+1)\omega} \Big) (T-t)^{(2k+1)\omega} \bar{p}(t,T,x,y) + \frac{(T-t)^{2k\omega}}{2k\omega+1}\rho(t,T,x,y)\right]\\
&&+\frac{(C_{\ref{LEMME_IT_KER_TEMP}})^{2k}}{k! \omega^{2k} } C_0 \left[ 2\Big( B(k\omega+1,\omega) +\frac{1}{(k+1)\omega} \Big) (T-t)^{(k+1)\omega} \bar{p}(t,T,x,y) + \frac{(T-t)^{k\omega}}{k\omega+1}\rho(t,T,x,y)\right]\\
&\le&  \frac{(C_{\ref{LEMME_IT_KER_TEMP}})^{2k+1}}{(k+1)! \omega^{2k+1} } (T-t)^{k\omega} \Big( (T-t)^{(k+1)\omega} \bar{p} + (T-t)^{\omega'}(\bar{p} + \rho) + \rho\Big)(t,T,x,y).
\end{eqnarray*}

To get the last inequality, observe that from all the constant multipliers, the biggest one is $\frac{1}{(k+1)\omega}$.
%Also, to regroup the terms, we bounded $T-t\le C_T$ since the time horizon is fixed, and set $C_{\ref{LEMME_IT_KER_TEMP}} = 2C_0 C_T$.
%This causes the dependency in time of $C_{\ref{LEMME_IT_KER_TEMP}}$.

Assume now that the estimate for $2k+1$ holds. We prove the estimate for $2k+2$.
\begin{eqnarray*}
&&|\tilde{p}\otimes H^{(2k+2)}(t,T,x,y)|\\
&\le& \frac{(C_{\ref{LEMME_IT_KER_TEMP}})^{2k+1}}{k! \omega^{2k+1}} \biggl( \int_t^T du (u-t)^{(2k+1)\omega} \int_{\R^d} \bar{p}(t,u,x,z) \bar{H}(u,T,z,y) dz\\
&&\quad+\int_t^T du (u-t)^{k\omega+ \omega'} \int_{\R^d} (\bar{p}+ \rho)(t,u,x,z)\bar{H}(u,T,z,y) dz \\
&&\quad+\int_t^T du (u-t)^{k\omega} \int_{\R^d} \rho(t,u,x,z)\bar{H}(u,T,z,y) dz\biggr)\\
&\le& \frac{(C_{\ref{LEMME_IT_KER_TEMP}})^{2k+1}}{k! \omega^{2k+1}} \int_t^T du (u-t)^{(2k+1)\omega} C_0 \Big( (T-u)^{\omega-1} + (u-t)^{\omega-1}+ \frac{\delta\wedge |x-\theta_{t,T}(y)|^{\eta(\alpha \wedge 1)} }{T-t}\Big) \bar{p}(t,T,x,y)
\\
&&+\frac{(C_{\ref{LEMME_IT_KER_TEMP}})^{2k+1}}{k! \omega^{2k+1} }  \int_t^T du (u-t)^{k\omega+\omega'}C_0 \Big(  2((T-u)^{\omega-1} + (u-t)^{\omega-1})+ \frac{\delta\wedge |x-\theta_{t,T}(y)|^{\eta(\alpha \wedge 1)} }{T-t}\Big) \bar{p}(t,T,x,y)\\
&&+\frac{(C_{\ref{LEMME_IT_KER_TEMP}})^{2k+1}}{k! \omega^{2k+1} }  \int_t^T du (u-t)^{k\omega}C_0 \Big(  (T-u)^{\omega-1} + (u-t)^{\omega-1}\Big) \bar{p}(t,T,x,y)\\
&\le& \frac{(C_{\ref{LEMME_IT_KER_TEMP}})^{2k+1}}{k! \omega^{2k+1}}C_0 \left(
 \Big( B((2k+1)\omega+1,\omega) + \frac{1}{(2k+2)\omega}\Big)(T-t)^{(2k+2)\omega} \bar{p}(t,T,x,y) +\frac{(T-t)^{(2k+1)\omega}}{(2k+1)\omega+1} \rho(t,T,x,y) \right)\\
&&+\frac{(C_{\ref{LEMME_IT_KER_TEMP}})^{2k+1}}{k! \omega^{2k+1} } C_0
\left( 
\Big( 2B(k\omega+ \omega'+1,\omega) + \frac{2}{(k+1)\omega+\omega'} \Big) (T-t)^{(k+1)\omega+\omega'}\bar{p}(t,T,x,y)
\frac{(T-t)^{k\omega+\omega'}}{k\omega+\omega'+1} \rho(t,T,x,y)
\right)\\
&&+\frac{(C_{\ref{LEMME_IT_KER_TEMP}})^{2k+1}}{k! \omega^{2k+1} }C_0 
\left(  B(k\omega+1,\omega) + \frac{1}{(k+1)\omega}\right)
 (T-t)^{(k+1)\omega}\bar{p}(t,T,x,y)\\
 &\le&  \frac{(C_{\ref{LEMME_IT_KER_TEMP}})^{2k+2}}{(k+1)! \omega^{2k+2} }(T-t)^{(k+1)\omega} \left( (T-t)^{(k+1)\omega} \bar{p}(t,T,x,y) + (\bar{p}+\rho)(t,T,x,y) \right).
\end{eqnarray*}

To obtain the last inequality, and the estimate for $\tilde{p}\otimes H^{2(k+1)}$, we factorised by  $(T-t)^{(k+1)\omega}$.
Also, the biggest constant multiplier is $\frac{1}{(k+1)\omega}$.

\end{proof}

\section{Proof of the estimates.}
\label{Proof of the estimates}

In order for the parametrix technique to be successful, we must obtain some sharp estimates on the quantities involved in the parametrix expansion \eqref{SERIE_PARAM_TEMP}.
This is usually done in two parts, first, we give two sided estimates on the density of the frozen process, as well as a similar upper bound on the parametrix kernel $H$, up to a time singularity.
Then, we prove that those bounds yield a smoothing effect in time for the time space convolution
$\tilde p\otimes H $ appearing in \eqref{SERIE_PARAM_TEMP}.

%allow to compensate the singularities induced by $H$ in the convolution $\otimes$ appearing in \eqref{SERIE_PARAM_TEMP}.

\subsection{Estimates on the Frozen Density}
\label{EST_FRZN_DENS_SECTION}
We first give the estimates on the frozen density.

\begin{proposition}\label{EST_FRZ_DENS_PROP}
Assume $\H$ is in force. There exists $C>1$ s.t. for all $t\in[0,T]$, $(x,y)\in (\R^{d})^2 $:
\begin{equation}\label{UB_FROZEN_DENS}
\tilde{p}^{T,y}(t,s,x,z) \le C \frac{(s-t)^{-d/\alpha}}{ \left(1+\frac{|z-x-\int_t^s F(u,\theta_{u,T}(y))du |}{(s-t)^{1/\alpha}} \right)^{\alpha+\gamma}} \bar{q}\left(C^{-1}\left|z-x-\int_t^s F(u,\theta_{u,T}(y))du\right|\right).
\end{equation}
Moreover, assume $\textbf{[H-LB]}$ holds.
%We define 
%$A_{low}^{\mathcal S} = \{z\in \R^d;  \forall v\in[0,1], \sigma_v^{-1}z \in  A_{low} \}$.
Then, for all $z-x-\int_t^s F(u,\theta_{u,T}(y))du\in A_{low}$, the lower bound holds:
\begin{equation}
\frac{C^{-1}(s-t)^{-d/\alpha}}{ \left(1+\frac{|z-x-\int_t^s F(u,\theta_{u,T}(y))du |}{(s-t)^{1/\alpha}} \right)^{\alpha+\gamma}} \underline{q}\left(C\left|z-x-\int_t^s F(u,\theta_{u,T}(y))du \right|\right) \le
\tilde{p}^{T,y}(t,s,x,z).
\end{equation}

\end{proposition}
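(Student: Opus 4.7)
The plan is to reduce both estimates to the tempered stable heat-kernel bounds of Sztonyk \cite{szto:10}. First, decompose
\[
\tilde X_s^{T,y} = m_{t,s}^y + M_s^{t,y},\qquad m_{t,s}^y := x + \int_t^s F(u,\theta_{u,T}(y))du,\quad M_s^{t,y}:=\int_t^s \sigma(u,\theta_{u,T}(y))dZ_u.
\]
By translation invariance, $\tilde p^{T,y}(t,s,x,z)$ equals the density of $M_s^{t,y}$ evaluated at the centred point $\tilde z := z - m_{t,s}^y$, so both \eqref{UB_FROZEN_DENS} and its lower-bound counterpart reduce to tempered stable heat-kernel estimates on the law of $M_s^{t,y}$ at $\tilde z$.

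Next I would identify the characteristics of $M_s^{t,y}$, which is an inhomogeneous additive process with Lévy-Khintchine exponent $\Psi^{t,s,y}(p)=\int_t^s \varphi_Z(\sigma(u,\theta_{u,T}(y))^*p)du$ and integrated Lévy measure $\nu^{t,s,y}(A)=\int_t^s \nu_u(\theta_{u,T}(y),A)du$. The ellipticity \textbf{[H-4]} combined with \textbf{[H-2]} yields
\[
\left|\E e^{i\langle p,M_s^{t,y}\rangle}\right| \le \exp\bigl(-c(s-t)|p|^\alpha\bigr),\qquad |p|\ge c^{-1},
\]
for some $c>0$ depending only on $K$ and $\kappa$; this guarantees integrability of the characteristic function and legitimises the Fourier representation \eqref{DENS_GEL_REP}. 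Using the pushforward structure $\nu_u(x,\cdot) = \nu\circ\sigma(u,x)^{-1}$ together with \textbf{[H-1]} and the two-sided bounds on $\sigma$ from \textbf{[H-4]}, a polar change of variables transfers the tempered stable domination from $\nu$ to each $\nu_u(\theta_{u,T}(y),\cdot)$, hence to $\nu^{t,s,y}$; moreover, under \textbf{[H-1b]} the concentration index $\gamma$ is preserved, since $\mu$ only undergoes a bi-Lipschitz reparametrisation of $S^{d-1}$ via $\theta\mapsto \sigma\theta/|\sigma\theta|$.

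With these characteristics in hand, the upper bound \eqref{UB_FROZEN_DENS} follows from Sztonyk's upper estimate in \cite{szto:10} applied after the parabolic rescaling $p\mapsto(s-t)^{1/\alpha}p$ in Fourier space that normalises the time horizon to unity. For the lower bound, Sztonyk's companion estimate requires a non-degenerate cone of jump directions of $\nu^{t,s,y}$ pointing towards $\tilde z$, carrying mass of order $\underline q(|\tilde z|)$ on balls of radius $\sim|\tilde z|$ in the tempered-stable scale. This is exactly the role of \textbf{[H-LB]}: the compatibility $\sigma(u,\theta_{u,T}(y))A_{low}\subset A_{low}$ transfers the lower estimate on $\nu$-balls from $\nu$ to each $\nu_u(\theta_{u,T}(y),\cdot)$, hence to the integrated measure $\nu^{t,s,y}$, so that Sztonyk's lower bound applies whenever $\tilde z\in A_{low}$. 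The main technical obstacle is ensuring that the pushforward by $\sigma$ preserves the spectral concentration index $\gamma$ and the tempering $\bar q$ uniformly in $(t,s,y)$; this is precisely why the invariance condition on $A_{low}$ is imposed in addition to pure ellipticity, and once it is in place the statement reduces to a direct invocation of \cite{szto:10}.
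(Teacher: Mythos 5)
Your plan has the same skeleton as the paper's proof: strip off the deterministic drift part, transfer the hypotheses \textbf{[H-1]}, \textbf{[H-2]} (and \textbf{[H-LB]} for the lower bound) from $\nu$ to the pushforward measures using ellipticity and the invariance of $A_{low}$, and then invoke Sztonyk's two-sided heat-kernel bounds. But the reduction step you describe would not go through as written. Sztonyk's estimates in \cite{szto:10} concern a genuine (time-homogeneous) L\'evy process; you treat $M_s^{t,y}$ as an ``inhomogeneous additive process'' with integrated characteristics $\Psi^{t,s,y}$ and $\nu^{t,s,y}$, and then appeal to ``the parabolic rescaling $p\mapsto(s-t)^{1/\alpha}p$ in Fourier space that normalises the time horizon to unity.'' Neither halves of this: (i) there is nothing to rescale because $M_s^{t,y}$ is not self-similar once $\bar q\not\equiv 1$, so the rescaling does \emph{not} reset the time horizon to $1$; (ii) even if it did, you would be left with a time-inhomogeneous marginal, not the object Sztonyk estimates. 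The mechanism that actually closes the gap is the change of variables $u=t+(s-t)v$ in the time integral of the L\'evy--Khintchine exponent, which reveals that $\Psi^{t,s,y}(p)=(s-t)\int_0^1\varphi_Z(\sigma_v^*p)\,dv$ and hence that $M_s^{t,y}$ has the \emph{same law} as $\mathcal S_{s-t}$ for a time-homogeneous L\'evy process $\mathcal S$ with L\'evy measure $\nu_{\mathcal S}(A)=\int_0^1\int\ind_{\{\sigma_v\xi\in A\}}\nu(d\xi)\,dv$. One then verifies \textbf{[H-1]}, \textbf{[H-2]}, \textbf{[H-LB]} for $\nu_{\mathcal S}$ with constants uniform in $(t,s,y)$ and applies Sztonyk at the single time $s-t$; no Fourier rescaling is involved.

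Two smaller points. When you say ``the pushforward ... preserves the spectral concentration index $\gamma$ and the tempering $\bar q$ ... this is precisely why the invariance condition on $A_{low}$ is imposed in addition to pure ellipticity,'' you are conflating the roles of the two hypotheses: the uniform preservation of $\gamma$ and of $\bar q$ for the \emph{upper} bound comes from ellipticity alone together with the doubling property \eqref{DOUBLING_TEMP}, which you should not forget to invoke (it is what lets you absorb the Jacobian factor $\bar q(\rho/|\sigma_v\varsigma|)\le C\bar q(\rho)$); the invariance $\sigma A_{low}\subset A_{low}$ is used only for the \emph{lower} bound, to ensure that the argument of $\nu$ in \textbf{[H-LB]} remains in $A_{low}$ after conjugating by $\sigma_v^{-1}$. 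With those corrections your plan is substantively the paper's proof.
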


\begin{proof}
We prove these estimates in the lines of Sztonyk \cite{szto:10}.
The idea consist in splitting large jumps and small jumps at the characteristic time scale and exploit the L\'evy-It\^o decomposition.
Fix $t,T\in \R_+$ and $y\in \R^d$.
Observe that since the drift part in the frozen process is deterministic, it suffices to prove the estimates or:
\begin{eqnarray*}
\Lambda_s= \int_t^s \sigma(u,\theta_{u,T}(y)) dZ_u. %\nonumber\\
% &=& x+\int_t^s F(u,\theta_{u,t}(y)) du+\sigma(\theta_{t,T}(y))(Z_s-Z_t).
\end{eqnarray*}
We point out that $\Lambda_s = \Lambda_s(t,T,y)$, where $t,T,y$ are fixed.
The Fourier transform of $\Lambda_s$ writes:
$$
\E(e^{i\langle \zeta , \Lambda_s\rangle})=
\exp \left( \int_t^s du \int_{\R^d} e^{ i\langle \zeta, \sigma(u,\theta_{u,T}(y)) \xi \rangle } -1 - i \langle \zeta, \sigma(u,\theta_{u,T}(y)) \xi \rangle  \ind_{\{|\xi| \le t^{1/\alpha}\}} \nu(d\xi) \right).
$$
Changing variables in the time integral to $v\in [0,1]$ and setting $\sigma_v =\sigma((s-t)v+t,\theta_{(s-t)v+t,T}(y))$, we obtain:
$$
\E(e^{i\langle \zeta , \Lambda_s\rangle})=
\exp \left( (s-t)\int_0^1 du \int_{\R^d} e^{ i\langle \zeta, \sigma_v \xi \rangle } -1 - i \langle \zeta, \sigma_v \xi \rangle  \ind_{\{|\xi| \le t^{1/\alpha}\}} \nu(d\xi) \right).
$$
Now, defining $\nu_\mathcal{S}$ to be the image measure of $dv \nu(d\xi)$ by the application 
$(v,\xi) \mapsto \sigma_v \xi$, we obtain:
$$
\E(e^{i\langle \zeta , \Lambda_s\rangle})=
\exp \left( (s-t) \int_{\R^d} e^{ i\langle \zeta, \eta \rangle } -1 - i \langle \zeta,\eta \rangle  \ind_{\{|\eta| \le t^{1/\alpha}\}} \nu_\mathcal{S}(d\eta) \right),
$$
which is the Fourier transform of some L\'evy process $(\mathcal{S}_u)_{u\ge 0}$, with L\'evy measure $\nu_{\mathcal{S}}$, at time $s-t$.
In other words, the marginals of $(\Lambda_s)_{s \in [t,T]}$ corresponds to the marginals of $(\mathcal{S}_{s-t})_{s \in [t,T]}$:
\begin{equation}\label{ID_LOI_MARG}
 \forall s \in [t,T], \ \Lambda_s \overset{ {\rm (law)}}{=} \mathcal{S}_{s-t}.
\end{equation}

The idea is now to work with the process $(\mathcal{S}_u)_{u\ge 0}$, and prove that it satisfies the assumptions \textbf{[H]}.
Specifically, we prove that \textbf{[H-1]} and \textbf{[H-2]} holds for $\nu_\mathcal{S}$, and that when 
\textbf{[H-LB]} holds for $\nu$, it holds as well for $\nu_\mathcal{S}$.

%From the assumptions \textbf{[H]}, we can actually prove that $\nu_{\mathcal{S}}$ satisfies the following:
%$$
%\forall A \in \mathcal{B}(\R^d), \ \nu_{\mathcal S}(A)\le .
%$$

Let $A \in \mathcal{B}(\R^d)$.
By definition of $\nu_{\mathcal{S}}$, we have:
$$
 \nu_{\mathcal S}(A) = \int_0^1 \int_{\R^{d}} \ind_{\{ \sigma_v \xi \in A\}} \nu(d\xi) dv.
$$
From the tempered stable domination, we deduce:
$$
 \nu_{\mathcal S}(A) \le \int_0^1\int_{S^{d-1}} \int_0^{+\infty}  \ind_{\{ \sigma_v s \varsigma \in A\}} 
 \frac{\bar{q}(s)ds}{s^{1+\alpha}} \mu(d\varsigma) dv.
$$
For fixed $v,\varsigma$, we change the variables in the integral in $ds$ to $\rho = s | \sigma_v \varsigma|$.
Observe that from the uniform ellipticity of $\sigma$ and the doubling property of $\bar{q}$, we have $\bar{q}\left(\frac{\rho}{|\sigma_v \varsigma|}\right)\le C\bar{q}(\rho)$.
It yields:
\begin{eqnarray*}
 \nu_{\mathcal S}(A) %= \int_0^1\int_{S^{d-1}} \int_0^{+\infty}  \ind_{\{ \rho \frac{ \sigma_v \varsigma}{|\sigma_v \varsigma|} \in A\}} 
% \frac{\bar{q}\left(\frac{\rho}{|\sigma_v \varsigma|}\right)d\rho}{\rho^{1+\alpha}} |\sigma_v \varsigma|^\alpha\mu(d\varsigma) dv
 \le  \int_0^1\int_{S^{d-1}} \int_0^{+\infty}  \ind_{\{ \rho \frac{ \sigma_v \varsigma}{|\sigma_v \varsigma|} \in A\}} 
 \frac{\bar{q}(\rho)d\rho}{\rho^{1+\alpha}} |\sigma_v \varsigma|^\alpha\mu(d\varsigma) dv.
\end{eqnarray*}

We now define $ \mu_{\mathcal{S}}(d\varsigma)$ to be the image measure of $|\sigma_v \varsigma|^\alpha \mu(d\varsigma) dv$ (measure on $[0,1]\times S^{d-1}$) by the application 
$(v,\varsigma) \mapsto  \frac{ \sigma_v \varsigma}{|\sigma_v \varsigma|}$.
We thus obtain:
\begin{eqnarray*}
 \nu_{\mathcal S}(A)  \le  \int_{S^{d-1}} \int_0^{+\infty}  \ind_{\{ \rho \zeta \in A\}} 
 \frac{\bar{q}(\rho)d\rho}{\rho^{1+\alpha}} \mu_{\mathcal S}(d \zeta).
\end{eqnarray*}
Consequently, \textbf{[H-1]} holds for $\nu_{\mathcal{S}}$.
Observe that by construction, we have that \textbf{[H-2]} holds for $\mu_\mathcal{S}$.
Therefore, from Stzonyk \cite{szto:10}, denoting $p_{\mathcal S}(u,\cdot)$ the density of 
$\mathcal{S}_u$, the following upper bound holds:
$$
p_{\mathcal S}(u,z) \le C\frac{u^{-d/\alpha} }{\left(1+ \frac{|z|}{u^{1/\alpha}}\right)^{\alpha +\gamma}} \bar{q}(|z|).
$$
We deduce the estimate for $\Lambda_s$, %nd then for $\tilde{X}^{T,y}_s$, 
and the upper bound on $\tilde{p}^{T,y}(t,s,x,z)$ then follows.
To see that the constant $C$ above does not depends on $t,T,y$, we have to see that 
$\mu_{\mathcal S} \big( B(x,r)\cap S^{d-1} \big) \le C r^{\gamma-1}$,  where $C$ does not depends on $t,T,y$.
By definition of $\mu_{\mathcal S}$, we have:
$$
\mu_{\mathcal S} \big( B(x,r)\cap S^{d-1} \big) = \int_0^1 \ind_{ \left\{   \frac{ \sigma_v \varsigma}{|\sigma_v \varsigma|} \in B(x,r)\cap S^{d-1} \right\} }|\sigma_v \varsigma|^\alpha \mu(d\varsigma) dv.
$$
Now, observe that $\frac{ \sigma_v \varsigma}{|\sigma_v \varsigma|} \in B(x,r) \Rightarrow
\varsigma \in B( C\sigma_v^{-1}x, Cr)$,
where $C$ is the uniform ellipticity constant of $\sigma(t,x)$.
Consequently, 
$$
\mu_{\mathcal S} \big( B(x,r)\cap S^{d-1} \big) \le \int_0^1 \mu \big( B( C\sigma_v^{-1}x, Cr)\cap S^{d-1} \big) dv \le Cr^{\gamma-1},
$$
uniformly in the parameters  $t,T$ and $y$.
To get a lower bound on $\tilde{p}^{T,y}(t,s,x,z)$, we investigate a lower bound for $p_{\mathcal S}(u,\cdot)$.
To that aim, we prove that when \textbf{[H-LB]} holds for $\nu$, it does for $\nu_{\mathcal S}$.
Specifically, assume \textbf{[H-LB]} holds for $\nu$.
By definition of $\mu_{\mathcal S}$, for all $x\in \R^d$, $r>0$, we have:
$$
\nu_{\mathcal S}\Big( B(x,r)\Big)= \int_0^1 \int_{\R^d} \ind_{\{|x- \sigma_v \varsigma | \le r \}} dv \nu(d\varsigma).
$$
Now, from uniform ellipticity of $\sigma$, 
$$
\{|x- \sigma_v \varsigma | \le r \} \supset \{| \sigma_v^{-1}x- \varsigma | \le Cr \}.
$$

Now, since by assumptions, $\sigma_v^{-1}x \in A_{low}$ for all $v \in [0,1]$, 
%Consequently, for $x \in A_{low}^{\mathcal S} = \{z\in \R^d;  \forall v\in[0,1], \sigma_v^{-1}z \in  A_{low} \}$, 
we have:
\begin{eqnarray*}
\nu_{\mathcal S}\Big( B(x,r)\Big)= \int_0^1 \nu \Big( \sigma_v^{-1}x, Cr \Big) dv 
\ge \int_0^1 Cr^\gamma \frac{\underline{q}(|\sigma_v^{-1}x|)}{|\sigma_v^{-1}x|^{\alpha +\gamma}}dv \ge Cr^\gamma \frac{\underline{q}(|x|)}{|x|^{\alpha +\gamma}},
\end{eqnarray*}

where to get the last inequality, we exploited the uniform ellipticity of $\sigma$.
Besides, for $r \in (0,1)$, we write using the ellipticity of $\sigma$:
$$
\nu_{\mathcal S}\Big( B(0,r)^c\Big)= \int_0^1 \int_{\R^d} \ind_{\{|\sigma_v \varsigma | \ge r \}} dv \nu(d\varsigma) \le \int_0^1 \int_{\R^d} \ind_{\{| \varsigma | \ge Cr \}} dv \nu(d\varsigma) \le C\frac{1}{r^\alpha}.
$$
Thus, we recovered \textbf{[H-LB]} for $\nu_{\mathcal S}$ and the lower bound holds for $p_{\mathcal S}(u,\cdot)$. Thus, the one for $\tilde{p}^{T,y}(t,s,x,z)$ follows.

\end{proof}

\begin{remark}
The idea of the proof was to identify the density of the frozen process to the density of some L\'evy process and exploit the L\'evy structure to derive bounds on the density.
The procedure described above require the uniform ellipticity of $\sigma$ in order to prove that the assumptions \textbf{[H]} holds for the new L\'evy  process $(\mathcal{S}_u)_{u \ge 0}$.
%The philosophy is that 
Intuitively, we can say that a uniform elliptic coefficient does not alter much the nature of the noise in the system.
From the  identity in law \eqref{ID_LOI_MARG} that holds for fixed $s \in [t,T]$ and equation \eqref{FLOW_REVERSE_OK}, we deduce that: 
\begin{equation}\label{REP_DENS_S}
\tilde{p}^{T,y}(t,T,x,y) = p_{\mathcal S}(T-t, \theta_{t,T}(y)-x).
\end{equation}
This identity will be useful when investigating the parametrix kernel $H$.
\end{remark}

Now, we state a Dirac convergence Lemma for the frozen process when the freezing parameter changes.
This convergence will be used in the proof of the well posedness of the martingale problem.
The difficulty comes from the fact that when integrating with respect to the freezing parameter (as it is the case in a parametrix procedure), the Dirac convergence does not follow from the Chapman-Kolmogorov equations.
However, since we have good estimates on the frozen density, we manage to prove the following lemma:

\begin{lemma}\label{convergence_dirac_temp}
For all bounded continuous function $f:\R^{d}\rightarrow \R, x\in \R^{d}$, % and a given $t_0 \in [0,T]$:

\begin{equation}
\left| \int_{\R^{d}} f(y) \tilde{p}^{T,y}(t,T,x,y)dy -f(x) \right| \underset{T\downarrow t} {\longrightarrow} 0,
\end{equation}
that is, for all $(x,y) \in \R^{d} \times \R^{d}$,  $\tilde p^{T,y}(t,T,x,y)dy \Rightarrow \delta_x(dy)$ weakly when  
$T\downarrow t$.
\end{lemma}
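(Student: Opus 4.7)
The plan is to decompose
\[
\int_{\R^d} f(y)\tilde p^{T,y}(t,T,x,y)\,dy - f(x) = R(T) + f(x)\big(m(T) - 1\big),
\]
where $R(T) := \int_{\R^d}(f(y)-f(x))\tilde p^{T,y}(t,T,x,y)\,dy$ and $m(T) := \int_{\R^d}\tilde p^{T,y}(t,T,x,y)\,dy$, and to show separately that $R(T)\to 0$ and $m(T)\to 1$ as $T\downarrow t$. All arguments rely on the pointwise upper bound from Proposition \ref{EST_FRZ_DENS_PROP}, on the identity \eqref{REP_DENS_S} ($\tilde p^{T,y}(t,T,x,y) = p_{\mathcal S^{T,y}}(T-t,\theta_{t,T}(y)-x)$), and on the change of variables $z=\theta_{t,T}(y)-x$, which is a diffeomorphism of $\R^d$ since $F$ is Lipschitz, with Jacobian $J_T$ tending uniformly on compacts to $1$ as $T\downarrow t$.

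To handle $R(T)$, I fix $\varepsilon>0$ and use continuity of $f$ at $x$ to pick $\eta>0$ with $|f(y)-f(x)|<\varepsilon$ on $\{|y-x|\le\eta\}$. There $|R(T)|\le\varepsilon\int\tilde p^{T,y}(t,T,x,y)\,dy$, and this integral is uniformly bounded via Proposition \ref{EST_FRZ_DENS_PROP} and the rescaling $u=z/(T-t)^{1/\alpha}$, since $\int(1+|u|)^{-(\alpha+\gamma)}\,du<+\infty$ by $\alpha+\gamma>d$ (or $\gamma=d$). On the complementary region $\{|y-x|>\eta\}$, I split further into $\{\eta<|y-x|\le M\}$ and $\{|y|>M\}$. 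Gronwall's lemma gives $|\theta_{t,T}(y)-y|\le C(T-t)(1+|y|)$, so that $|\theta_{t,T}(y)-x|\ge\eta/2$ on the first subregion (for $T-t$ small depending on $M,\eta$) and $|\theta_{t,T}(y)-x|\ge|y|/3$ on the second (for $M$ large and $T-t$ small). The upper bound then dominates both pieces by $C\int_{|u|>c_{M,\eta}/(T-t)^{1/\alpha}}(1+|u|)^{-(\alpha+\gamma)}\,du$, which vanishes as $T\downarrow t$.

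For the mass $m(T)$, I exploit that $\tilde p^{T,x}(t,T,x,\cdot)$ is a genuine probability density on $\R^d$ (the freezing parameter is held fixed at $x$), so $\int\tilde p^{T,x}(t,T,x,y)\,dy=1$ and
\[
m(T)-1 = \int_{\R^d}\big[\tilde p^{T,y}(t,T,x,y)-\tilde p^{T,x}(t,T,x,y)\big]\,dy.
\]
After the same change of variables $z=\theta_{t,T}(y)-x$, this rewrites as
\[
\int_{\R^d} \big[p_{\mathcal S^{T,\theta_{T,t}(x+z)}}(T-t,z)J_T(z) - p_{\mathcal S^{T,x}}(T-t,z)\big]\,dz.
\]
I split the domain into $\{|z|\le\eta\}$ and $\{|z|>\eta\}$: the latter contribution is negligible by the tail estimates of the previous paragraph applied to both densities; on the former, $\theta_{T,t}(x+z)$ stays close to $x$, and a H\"older-in-freezing-parameter bound on $p_{\mathcal S^{T,\cdot}}(T-t,z)$ obtained from \textbf{[H-5]} (via the Fourier representation \eqref{DENS_GEL_REP}, exactly as will be used later for the parametrix kernel $H$) together with $J_T\to 1$ yields a contribution that vanishes as $\eta\downarrow 0$ and $T\downarrow t$.

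The main obstacle is precisely this mass convergence: the Chapman--Kolmogorov identity $\int\tilde p^{T,y}(t,T,x,y)\,dy=1$ would hold for a fixed freezing parameter, but fails here since the parameter coincides with the integration variable. The substitution by the constant freezing $y\equiv x$, whose error is controlled by the H\"older continuity of the noise with respect to the freezing parameter (assumption \textbf{[H-5]}), is what bypasses this obstruction and reduces the problem to the standard concentration of a probability density at the origin.
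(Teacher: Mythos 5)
Your decomposition into $R(T)+f(x)\bigl(m(T)-1\bigr)$ is a genuinely different (and somewhat more modular) organization than the paper's: the paper subtracts a fixed--freezing--parameter density $\tilde p^{T,\theta_{T,t}(x)}(t,T,x,y)$ inside the $f$-weighted integral and treats the result as a Dirac term plus a comparison term $\Delta$, whereas you extract the mass $m(T)$ and a continuity-controlled remainder $R(T)$. Your treatment of $R(T)$ is clean --- it needs only the uniform upper bound of Proposition \ref{EST_FRZ_DENS_PROP} and a tail estimate with a fixed threshold $\eta$, and no comparison between densities. The comparison is isolated to $m(T)-1$, which is exactly where the paper spends its Fourier/H\"older effort as well, so the two routes meet at the same essential estimate.

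The gap is in the diagonal piece of $m(T)-1$. With the split $\{|z|\le\eta\}\cup\{|z|>\eta\}$ for a \emph{fixed} $\eta$, the H\"older-in-freezing-parameter estimate from \textbf{[H-5]} and the Fourier representation (the paper's $\Gamma_2$ computation) gives a pointwise bound of the order
\[
\bigl| p_{\mathcal S^{T,\theta_{T,t}(x+z)}}(T-t,z) - p_{\mathcal S^{T,x}}(T-t,z) \bigr| \;\lesssim\; (T-t)^{-d/\alpha}\bigl(|z| + (T-t)\bigr)^{\eta(\alpha\wedge1)},
\]
so that $\int_{|z|\le\eta}|\cdots|\,dz \lesssim (T-t)^{-d/\alpha}\,\eta^{\,d+\eta(\alpha\wedge1)}$, which \emph{diverges} as $T\downarrow t$ when $\eta$ is held fixed. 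Your statement that this piece ``vanishes as $\eta\downarrow 0$ and $T\downarrow t$'' therefore fails as written; the two limits cannot be decoupled. The fix --- and it is what the paper's set $D_1$ implements --- is to take $\eta \asymp (T-t)^{1/\alpha-\beta}$ with $0<\beta<\frac{\eta(1\wedge 1/\alpha)}{d+\eta(\alpha\wedge 1)}$ (a nonempty interval). Then the near contribution is $O\bigl((T-t)^{\eta(1\wedge1/\alpha)-\beta(d+\eta(\alpha\wedge1))}\bigr)\to 0$, while the far contribution, after rescaling by $(T-t)^{1/\alpha}$, is $\int_{|u|>(T-t)^{-\beta}}(1+|u|)^{-(\alpha+\gamma)}\,du\to 0$. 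Without coupling $\eta$ to $T-t$ at this intermediate rate, the argument for $m(T)\to1$ does not close. (Your choice of reference $\tilde p^{T,x}$ instead of the paper's $\tilde p^{T,\theta_{T,t}(x)}$ is also fine and makes no essential difference, since both are genuine probability densities concentrating at a point that tends to $x$.)
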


\begin{proof}
We prove this convergence in the lines of \cite{huan:meno:14}.
Let us write:
\begin{eqnarray*}
 \int_{\R^{d}} f(y) \tilde{p}^{T,y}(t,T,x,y)dy -f(x) 
  &=&  \int_{\R^{d}} f(y) \Big(\tilde{p}^{T,y}(t,T,x,y) -  \tilde{p}^{T,\theta_{T,t}(x)}(t,T,x,y)\Big)dy \\
  &&+  \int_{\R^{d}} f(y) \Big(\tilde{p}^{T,\theta_{T,t}(x)}(t,T,x,y) \Big)dy-f(x).
 \end{eqnarray*}
From the usual Dirac convergence in the Kolmogorov equation \eqref{CHAP_KOL_BWD}, the second term tends to zero when $T\rightarrow t$.
We focus on the first term. Define:
\begin{equation}\label{conv_temp}
\Delta = \int_{\R^{d}} f(y) \Big(\tilde{p}^{T,y}(t,T,x,y) -  \tilde{p}^{T,\theta_{T,t}(x)}(t,T,x,y)\Big)dy.
\end{equation}
For a given threshold $K>0$ and a certain (small) $\beta >0$ to be specified, we split $\R^{d}$ into $D_1\cup D_2$ where:
$$
D_1=\left\{ y \in \R^{d}; \frac{|\theta_{t,T}(y)-x|}{(T-t)^{1/\alpha}} \leq K (T-t)^{-\beta} \right\},\ 
D_2=\left\{ y \in \R^{d}; \frac{|\theta_{t,T}(y)-x|}{t^{1/\alpha}} > K (T-t)^{-\beta} \right\}.
$$

A direct application of Proposition \ref{EST_FRZ_DENS_PROP} yields:
$$
\tilde{p}^{T,y}(t,T,x,y) \le C\frac{(T-t)^{-d/\alpha}}{ \left(1+\frac{|\theta_{t,T}(y)-x|}{(T-t)^{1/\alpha}} \right)^{\alpha+\gamma}} \bar{q}(|\theta_{t,T}(y)-x|).
$$

Now, observe that 
$$
y-x-\int_t^TF(u,\theta_{u,T} (\theta_{T,t}(x)))du=y-x-\int_t^TF(u,\theta_{u,t}(x))du=y-\theta_{T,t}(x).
$$ 
Also, from the Lipschitz property of the flow, we have $|\theta_{t,T}(y)-x| \asymp |y-\theta_{T,t}(x)|$.
Consequently, we obtain:
$$
\tilde{p}^{t,\theta_{T,t}(x)}(t,T,x,y) \le C\frac{(T-t)^{-d/\alpha}}{ \left(1+\frac{|\theta_{t,T}(y)-x|}{(T-t)^{1/\alpha}} \right)^{\alpha+\gamma}} \bar{q}(|\theta_{t,T}(y)-x|),
$$
and we have the same upper bound for the two densities in \eqref{conv_temp}.
The idea is that on $D_2$, we use the tail estimate, and on $D_1$, we will explicitly exploit the compatibility between the spectral measures and the Fourier transform in the Fourier representation of the densities.
Set for  $i\in \{1,2\}$: 
$$
\Delta_{D_i}:=\int_{D_i} f(y) \Big(\tilde{p}^{T,y}(t,T,x,y) -  \tilde{p}^{T,\theta_{T,t}(x)}(t,T,x,y)\Big)dy.
$$ 
For $D_2$, we bound the two densities as we described above:
\begin{eqnarray*}
|I_{D_2}|&\leq & C|f|_\infty \int_{D_2} \frac{(T-t)^{-d/\alpha}}{ \left(1+\frac{|\theta_{t,T}(y)-x| }{(T-t)^{1/\alpha}} \right)^{\alpha+\gamma}} \bar{q}(C^{-1}|\theta_{t,T}(y)-x| ) dy \\
&\le& C|f|_\infty \int_{K (T-t)^{-\beta}}^{+\infty} \frac{r^{d-1}}{1+ r^{\alpha+\gamma}}\bar{q}(r)dr\\ 
&\leq &C (T-t)^{\beta(\gamma+\alpha-d)}.
\end{eqnarray*}
Thus, for $\beta>0$, $I_{D_2}\underset{T\downarrow t}{\longrightarrow}0 $.
On $D_1$, we will start from the inverse Fourier representation of $\tilde{p}^{T,z}(t,x,y)$, $z=\theta_{T,t}(x),y$.
Recall we denoted by $\varphi_Z$ the L\'evy Khintchine exponent of $Z$, that is 
%$e^{t\varphi_{Z}(\sigma(z)^*p)}=\E(e^{i\langle p, \sigma(\theta_{0,t}(z)) Z_t \rangle})$, 
$e^{t\varphi_{Z}(p)}=\E(e^{i\langle p,  Z_t \rangle})$, denoting $\sigma^*$ the transpose of $\sigma$,
we have:
\begin{eqnarray*}
\tilde{p}^{T,z}(t,T,x,y)=\frac{1}{(2\pi)^{d}} \int_{\R^{d}} d\zeta e^{-i \langle \zeta ,y-\int_t^T F(u,\theta_{u,T}(z))du-x \rangle} \exp \Big( \int_t^T\varphi_{Z}(\sigma(u,\theta_{u,T}(z))^*\zeta)du\Big).
\end{eqnarray*}

Consequently, we have:
\begin{eqnarray*}
&&\tilde{p}^{T,y}(t,T,x,y) -  \tilde{p}^{T,\theta_{T,t}(x)}(t,T,x,y)\\
 &=& \frac{1}{(2\pi)^d}\int_{\R^d} e^{-i \langle \zeta, y-\int_t^T F(u,\theta_{u,T}(y))du-x \rangle} e^{ \int_t^T \varphi_{Z}(\sigma(u,\theta_{u,T}(y))^*\zeta) du} \\
 &&- e^{-i \langle \zeta, y-\int_t^T F(u,\theta_{u,t}(x))du-x \rangle} e^{ \int_t^T \varphi_{Z}(\sigma(u,\theta_{u,t}(x))^*\zeta)du}d\zeta\\
&=&\frac{1}{(2\pi)^d}\int_{\R^d} \left(e^{-i \langle \zeta, y-\int_t^T F(u,\theta_{u,T}(y))du-x \rangle} - e^{-i \langle \zeta, y-\int_t^T F(u,\theta_{u,t}(x))du-x \rangle}\right) e^{\int_t^T \varphi_{Z}(\sigma(u,\theta_{u,T}(y))^*\zeta) du}d\zeta\\
&&+\frac{1}{(2\pi)^d}\int_{\R^d} e^{-i \langle \zeta, y-\int_t^T F(u,\theta_{u,t}(x))du-x \rangle}\left( e^{ \int_t^T\varphi_{Z}(\sigma(u,\theta_{u,T}(y))^*\zeta)du } -e^{ \int_t^T \varphi_{Z}(\sigma(u,\theta_{u,t}(x))^*\zeta) du}\right) d\zeta \\
&=&\Gamma_1(t,T,x,y)+\Gamma_2(t,T,x,y).
\end{eqnarray*}

We split accordingly:
$$\int_{D_1} f(y) \Big(\tilde{p}^{T,y}(t,T,x,y) -  \tilde{p}^{T,\theta_{T,t}(x)}(t,T,x,y)\Big) dy = \int_{D_1} f(y) \Gamma_1(t,T,x,y) dy +\int_{D_1} f(y) \Gamma_2(t,T,x,y) dy.$$

Note first that when $\alpha \le 1$, we assumed $F=0$, so that the term $\Gamma_1(t,T,x,y) =0$ in that case.
We now treat this term, with $\alpha > 1$.
Using the mean value theorem, we write:
\begin{eqnarray*}
&&\Gamma_1(t,T,x,y)\\
&=&\frac{1}{(2\pi)^d}\int_{\R^d} 
\int_0^1 d\lambda i \langle \zeta, (I-\theta_{T,t})(\theta_{t,T}(y)-x) \rangle e^{-i \langle \zeta, 
[\lambda I +(1-\lambda)\theta_{T,t}](\theta_{t,T}(y)-x) \rangle}
e^{ \int_t^T \varphi_{Z}(\sigma(u, \theta_{u,T}(y))^*\zeta) du} d\zeta,
\end{eqnarray*}
where we denoted by $I$ the identity map of $\R^d$.
Recall that from the Lipschitz property of the flow and Gronwall's Lemma, there exists $C>0$ such that
for all $t \le T$, $z\in \R^d$, $|(I-\theta_{T,t})(z)| \le C(T-t)  (1+|z|)$.
Thus, since $y \in D_1$, we have for $\beta \le 1/\alpha$,
 \begin{eqnarray*}
 |\Gamma_1(t,T,x,y)|&\le& C(T- t) \int_{\R^d}|\zeta| e^{-K(T-t)|\zeta|^\alpha}d\zeta \le C (T-t)^{1-\frac{1}{\alpha}- \frac{d}{\alpha}}.
\end{eqnarray*}%\le C t^{1-d/\alpha}
Integrating on $D_1$, we obtain:
\begin{eqnarray*}
\left|\int_{D_1} f(y) \Gamma_1(t,T,x,y) dy\right| \le C|f|_\infty (T-t)^{1-\frac{1}{\alpha} -\beta d}
%C|f|_\infty  |\theta_{0,t}(y)-x| \int_{\R^d}|p| | I-\theta_{t,0}|_{\infty} e^{-t|p|^\alpha}dp t^{d/\alpha-d\beta}
\underset{T \rightarrow t}{\longrightarrow} 0,
\end{eqnarray*}
when $1/d(1-1/\alpha) >\beta$.
%We denoted $Leb(\cdot)$ the usual Lebesgue measure on $\R^d$.
For $\Gamma_2$, we write:

\begin{eqnarray*}
\Gamma_2 (t,T,x,y) &=&
\frac{1}{(2\pi)^d}\int_{\R^d} d\zeta  e^{-i \langle \zeta, y-\theta_{T,t}(x) \rangle} \int_0^1 d\lambda\  
e^{ \int_t^T \lambda\varphi_Z(\sigma(u,\theta_{u,T}(y))^* \zeta) + (1-\lambda)  \varphi_Z(\sigma(u, \theta_{u,t}(x))^* \zeta) du} \\
&&\times \int_t^T( \varphi_Z(\sigma(u,\theta_{u,T}(y))^* \zeta)- \varphi_Z(\sigma(\theta_{u,t}(x))^* \zeta)) du.
\end{eqnarray*}
We know from assumption \textbf{[H-2]} that the L\'evy-Khintchine exponent is bounded by $-K(T-t)|\zeta|^\alpha$, thus, we obtain independently of $\lambda \in (0,1)$:
$$
e^{ \int_t^T \lambda \varphi_Z(\sigma(u,\theta_{u,T}(y))^* \zeta) + (1-\lambda) \varphi_Z(\sigma(u,\theta_{u,t}(x))^* \zeta)du}\le e^{-K(T-t)|\zeta|^\alpha}.
$$
On the other hand, using the bound on the L\'evy-Khintchine exponent and assumption \textbf{[H-5]}, we can rewrite the increment:
\begin{eqnarray*}
&&\left|\int_t^T  \varphi_Z(\sigma(u,\theta_{u,T}(y))^* \zeta)- \varphi_Z(\sigma(u,\theta_{u,t}(x))^* \zeta) du \right|
\\
&=& \left| \int_t^T \int_{\R^{d}} \cos(\langle  \sigma(u,\theta_{u,T}(y))^*\zeta,\xi \rangle)-\cos(\langle\sigma(u,\theta_{u,t}(x))^* \zeta, \xi\rangle) \nu(dz) du \right|\\
&\le& K  |\zeta|^\alpha \int_t^T|\theta_{u,t}(x)-\theta_{u,T}(y)|^{\eta(\alpha\wedge 1)} \le K (T-t)  |\zeta|^\alpha |x-\theta_{t,T}(y)|^{\eta(\alpha\wedge 1)}.
\end{eqnarray*}
To summarize, we obtained:
\begin{eqnarray*}
\int_{D_1} f(y) \Gamma_2(t,T,x,y) &\leq& |f|_\infty\int_{D_1}dy \left| \Gamma_2(t,T,x,y)\right| \\
&\leq& C  \int_{D_1}dy \int_{\R^{d}}    (T-t) |\zeta|^\alpha |x-\theta_{t,T}(y)|^{\eta(\alpha\wedge 1)}
e^{-K(T-t)|\zeta|^\alpha}d\zeta.
\end{eqnarray*}
Changing variables, and integrating over $\zeta$ yields
\begin{eqnarray*}
\int_{D_1} f(y) \Gamma_2(t,T,x,y) &\leq& \frac{C}{t^{d/\alpha}}|f|_\infty
 \int_{D_1}dy |\theta_{t,T}(y)-x|^{\eta(\alpha\wedge 1)} \\
 &=& \frac{C}{(T-t)^{d/\alpha}}|f|_\infty \int_0^{(T-t)^{-\beta}}dr \  r^{\eta(\alpha \wedge 1)+d-1} (T-t)^{d/\alpha+ \eta(1\wedge1/\alpha)}.
\end{eqnarray*}
Choosing now $\frac{\eta(1/\alpha \wedge 1)}{d+\eta(\alpha \wedge 1)} > \beta>0 $ gives that $|I_{D_1}|\underset{T\downarrow t}{\longrightarrow} 0$, which concludes the proof.
\end{proof}

\subsection{The Smoothing Properties of $H(t,T,x,y)$.}
\label{SECT_SMTH_H}

First, we investigate an upper bound for the parametrix kernel. 
Then, we use it to prove a smoothing effect in time, in the sense that the singularity in time is relaxed after an integration in space.
Recall that:
$$
\forall t\ge 0, \ (x,y)\in (\R^{d})^2,\ H(t,T,x,y) := \Big(L(x,\nabla_x) - L(\theta_{t,T}(y),\nabla_x) \Big)\tilde{p}^{T,y}(t,T,x,y).
$$
Heuristically, the difference of the generators should yield a singularity $(T-t)^{-1}$, just as in the Gaussian case.
This singularity is not integrable in time, however, we will prove that after an integration in space (which naturally occurs in the time-space convolution in the parametrix series), it will be.

\begin{proposition}\label{EST_H_PROP}
Assume $\H$ is in force. There exists $C>0$ s.t. for all $t\in (0,T]$, $(x,y)\in (\R^{d})^2 $:
$$
|H(t,T,x,y)| \le  C\left( (T-t)^{-1/\alpha}\ind_{\{\alpha >1\}}+\frac{\delta \wedge |x-\theta_{t,T}(y)|^{\eta(\alpha \wedge 1)}}{T-t} \right) \bar{p}(t,T,x,y)= \bar{H}(t,T,x,y),$$
where
\begin{itemize}

\item when the drift $F$ is bounded, $\theta$ is the identity map: $\theta_{t,T}(y)=y$, and
\begin{itemize}
\item under $\textbf{[H-1a]}$, $\gamma=d$ and for all $s>0$, $Q(s) =\bar{q}(s)$, 
\item under $\textbf{[H-1b]}$, for all $s>0$, $Q(s) =\min(1,s^{\gamma-1})\bar{q}(s)$,
\end{itemize}

\item when the drift $F$ is Lipschitz continuous, $\theta_{s,t}(x)$ denotes the solution to the ordinary differential equation:
$$
\frac{d}{ds} \theta_{s,T}(y) = F(\theta_{s,T}(y)), \ \theta_{T,T}(y) =y, \ \forall 0 \le s \le T,
$$
and 
\begin{itemize}
\item under $\textbf{[H-1a]}$, $\gamma=d$ and for all $s>0$, $Q(s) =\min(1,s)\bar{q}(s)$,
\item under $\textbf{[H-1a]}$, for all $s>0$, $Q(s) =\min(1,s,s^{\gamma-1})\bar{q}(s)$.
\end{itemize}

\end{itemize}
\end{proposition}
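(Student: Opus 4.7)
The plan is to decompose $H(t,T,x,y)=H_F(t,T,x,y)+H_J(t,T,x,y)$ into drift and jump contributions of the difference $L_t(x,\nabla_x)-L_t(\theta_{t,T}(y),\nabla_x)$ acting on $\tilde p^{T,y}(t,T,\cdot,y)$ at $x$, and to control each using the pointwise gradient and Hessian estimates
$$
|\nabla_x \tilde p^{T,y}(t,T,x,y)|\le C(T-t)^{-1/\alpha}\bar p(t,T,x,y),\quad |D_x^2 \tilde p^{T,y}(t,T,x,y)|\le C(T-t)^{-2/\alpha}\bar p(t,T,x,y),
$$
which follow by differentiating the Fourier inversion formula \eqref{DENS_GEL_REP} and replaying the scaling argument of Proposition~\ref{EST_FRZ_DENS_PROP}. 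When $F$ is bounded the proxy carries no drift, so $H_F=\langle F(t,x),\nabla_x \tilde p^{T,y}\rangle$, yielding $|H_F|\le C(T-t)^{-1/\alpha}\bar p$ and producing the $\ind_{\{\alpha>1\}}(T-t)^{-1/\alpha}$ summand (recall $F\equiv 0$ when $\alpha\le 1$ by \textbf{[H-3]}). When $F$ is Lipschitz, $H_F=\langle F(t,x)-F(t,\theta_{t,T}(y)),\nabla_x \tilde p^{T,y}\rangle$ is dominated by $C|x-\theta_{t,T}(y)|(T-t)^{-1/\alpha}\bar p$, and the surplus factor $|x-\theta_{t,T}(y)|/(T-t)^{1/\alpha}$ gets absorbed into the tail of $\bar p$ at the cost of decreasing its concentration exponent by one, which accounts for the extra $\min(1,s)$ factor appearing in $Q$ in the Lipschitz-drift case.

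For $H_J$, I will exploit the symmetry of $\nu$ (inherited by $\nu_t(\xi,\cdot)$ through the linear map $\sigma(t,\xi)$) to symmetrize the integrand and make the principal value compensator $\langle\sigma z,\nabla\rangle/(1+|z|^2)$ vanish by oddness, thereby rewriting
$$
H_J(t,T,x,y)=\frac{1}{2}\int_{\R^d}\Bigl[\tilde p^{T,y}(t,T,x+w,y)+\tilde p^{T,y}(t,T,x-w,y)-2\tilde p^{T,y}(t,T,x,y)\Bigr]\,\bigl(\nu_t(x,dw)-\nu_t(\theta_{t,T}(y),dw)\bigr).
$$
Applying \textbf{[H-5]}, the signed measure $\nu_t(x,\cdot)-\nu_t(\theta_{t,T}(y),\cdot)$ is dominated, as a measure, by $C\,(\delta\wedge|x-\theta_{t,T}(y)|^{\eta(\alpha\wedge 1)})$ times the tempered stable dominator $\bar q(s)s^{-1-\alpha}\,ds\,\mu(d\theta)$. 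It thus suffices to bound the symmetric second difference of $\tilde p^{T,y}$ integrated against this dominator by $C(T-t)^{-1}\bar p(t,T,x,y)$. I will split the integration region at the characteristic scale $|w|=(T-t)^{1/\alpha}$: on small jumps a second-order Taylor expansion at $x$ together with the Hessian bound yields $(T-t)^{-2/\alpha}\bar p(t,T,x,y)$ times $\int_{S^{d-1}}\int_0^{(T-t)^{1/\alpha}}s^2\bar q(s)s^{-1-\alpha}\,ds\,\mu(d\theta)\le C(T-t)^{2/\alpha-1}$, hence a contribution of order $(T-t)^{-1}\bar p$; on large jumps, the $2\tilde p^{T,y}(t,T,x,y)$ term gives $C(T-t)^{-1}\bar p$ directly from the tempered stable tail of the dominator, while the shifted densities $\tilde p^{T,y}(t,T,x\pm w,y)$ are estimated via Proposition~\ref{EST_FRZ_DENS_PROP}.

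The main obstacle is this last large-jump piece: integrating the shifted density bound of Proposition~\ref{EST_FRZ_DENS_PROP} against the anisotropic dominator $\bar q(s)s^{-1-\alpha}\,ds\,\mu(d\theta)$. In case \textbf{[H-1a]}, $\mu$ has a bounded density on the sphere and one can take $\gamma=d$, so the convolution stays homogeneous and $Q$ reduces to $\bar q$ (up to the Lipschitz-drift correction). In case \textbf{[H-1b]}, however, $\mu$ may concentrate on a set of fractal dimension $\gamma-1<d-1$, and the interaction between the off-diagonal peak of $\tilde p^{T,y}$ at $\theta_{t,T}(y)-x$ and the singular support of $\mu$ forces a case distinction between the diagonal regime $|x-\theta_{t,T}(y)|\le (T-t)^{1/\alpha}$ and the off-diagonal one. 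In the off-diagonal regime one cannot control the shifted tail pointwise by $\bar p$, and a direct computation exploiting the doubling property \eqref{DOUBLING_TEMP} of $\bar q$ and the dimensional constraint $\gamma+\alpha>d$ produces precisely the $\min(1,|x-\theta_{t,T}(y)|^{\gamma-1})$ loss that converts $\bar q$ into $Q$. Summing the drift and jump estimates, and noting that any surplus power $|x-\theta_{t,T}(y)|^k$ is always reabsorbed via the truncation $\delta\wedge|\cdot|^{\eta(\alpha\wedge 1)}$ at the cost of concentration, yields the announced bound on $|H(t,T,x,y)|$.
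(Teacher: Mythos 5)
Your plan matches the paper's overall strategy closely — same drift/jump split, same reliance on \textbf{[H-5]} to extract the factor $\delta\wedge|x-\theta_{t,T}(y)|^{\eta(\alpha\wedge 1)}$, same truncation at the scale $(T-t)^{1/\alpha}$, and the same three-regime analysis of the shifted density against the anisotropic dominator for large jumps, leading to the $Q$ deterioration. The one genuinely different ingredient is your treatment of the \emph{small-jump} contribution: you symmetrize to a second difference and invoke a pointwise Hessian bound $|D_x^2\tilde p^{T,y}(t,T,x,y)|\le C(T-t)^{-2/\alpha}\bar p(t,T,x,y)$, then Taylor-expand. The paper never states or uses a Hessian bound; instead it writes the small-jump difference of generators, applied to the Schwartz density $p_M$ of the compensated martingale part from the L\'evy--It\^o split, in Fourier form with the symbol $l_t^M(x,\zeta)-l_t^M(\theta,\zeta)$, and uses the symbol estimate $|l_t^M(x,\zeta)-l_t^M(\theta,\zeta)|\le C(\delta\wedge|x-\theta|^{\eta(\alpha\wedge1)})|\zeta|^\alpha$ together with Schwartz-class estimates on $\hat g_{T-t}$ to get the bound with the correct tail decay directly. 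Your route is more elementary in form, but the Hessian bound you invoke is itself not a free consequence of differentiating the Fourier inversion formula \eqref{DENS_GEL_REP}: to get the $\alpha+\gamma$ tail decay (and the tempering $\bar q$) for $D^2_x\tilde p$ you need exactly the same L\'evy--It\^o + reconvolution argument as Proposition~\ref{EST_FRZ_DENS_PROP}, with both derivatives falling on $p_M$. You gesture at this ("replaying the scaling argument"), and the claim is true, but you have in effect shifted the paper's symbol estimate into an unproven Hessian estimate; it should be stated as a lemma and justified.

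One inaccuracy worth correcting: in the Lipschitz-drift case you say the surplus $|x-\theta_{t,T}(y)|/(T-t)^{1/\alpha}$ is absorbed into the tail "at the cost of decreasing its concentration exponent by one." That is \emph{not} what the paper does, and it would be a bad idea: the proof of the parametrix iteration (Lemmas~\ref{SEMI_GROUP_PROP}, \ref{FIRST_STEP}, \ref{SECOND_STEP}) relies on $\bar H$ and $\bar p$ carrying the \emph{same} polynomial tail exponent $\alpha+\gamma$ so that the convolution reproduces $\bar p$. Lowering the exponent to $\alpha+\gamma-1$ would break the semigroup-type bound of Lemma~\ref{SEMI_GROUP_PROP} and hence the convergence argument. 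The paper keeps the exponent fixed and transfers the surplus $|x-\theta_{t,T}(y)|$ entirely into the temperation, $|s|\bar q(s)\le Q(s)$, which is what produces the extra factor in $Q$ in the Lipschitz case. This is the mechanism you should adopt; it also removes the apparent tension in your phrasing between "decreasing the exponent" and "the extra factor in $Q$."
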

Thus, the upper bound on the Kernel $H$ is the same as the upper bound on the Frozen density $\tilde p^{T,y}(t,T,x,y)$ up to the additional multiplier $ (T-t)^{-1/\alpha} \ind_{\alpha >1}+ \big(\delta \wedge |x-\theta_{t,T}(y)|^{\eta(\alpha \wedge 1)} \big)(T-t)^{-1}$, that can be seen as the singularity induced by the difference $L(x,\nabla_x) - L(\theta_{t,T}(y),\nabla_x)$ applied to the frozen density.
The proof proceeds following the lines of Sztonyk \cite{szto:10}, splitting the large jumps and the small jumps.
The small jumps are dealt using Fourier analysis techniques, whereas the big jumps are dealt more directly.
\begin{proof}

Recall that from \eqref{ID_LOI_MARG}, the density of $\tilde{X}^{T,y}_s$ can be linked to the density of the L\'evy process 
$(\mathcal{S}_u)_{u \ge 0}$ considered at time $s-t$.
We now exploit the L\'evy structure of $(\mathcal{S}_u)_{u \ge 0}$ to obtain an upper bound on $H(t,T,x,y)$.
Specifically, let us introduce the L\'evy-It\^o decomposition of $(\mathcal{S}_u)_{u \ge 0}$:
$$
\mathcal{S}_u= M_u+N_u,
$$
where $(M_u)_{u \ge 0}$ is a martingale and $(N_u)_{u\ge 0}$ is a compound Poisson process.
We choose to place the cut-off at the characteristic time-scale, namely $(T-t)^{1/\alpha}$.
Therefore, the Fourier transform of $M_u$ writes:

$$
\E(e^{i\langle \zeta, M_u \rangle}) = \exp \left( u \int_{\R^d} (e^{i \langle \zeta, \eta \rangle} - 1 - i\langle \zeta ,\eta \rangle) 
\ind_{\{ |\eta|\le (T-t)^{1/\alpha}\}} \nu_\mathcal{S}(d\eta) \right).
$$ 
This expression is integrable and regular in the variable $\zeta$ (see Section 2 in Sztonyk \cite{szto:10} and the references therein).
Thus, the density $p_M(u,\cdot)$ of $M_u$ exists and is the  Schwartz's class.
Thus, we can say that this term produces the density in the L\'evy-It\^o decomposition.
Also, denoting by $\bar{\nu}_{\mathcal S}(dz) = \ind_{\{ |z| \ge (T-t)^{1/\alpha}\}} \nu_{\mathcal S}(dz)$, we have the following decomposition for the law of the compound Poisson process $N_u$:
$$
P_{N_u}(dz) = e^{-u\bar{\nu}_{\mathcal{S}}(\R^d)} \sum_{k=0}^{+\infty} \frac{u^k \bar{\nu}_{\mathcal{S}}^{*k}(dz)}{k!}. % \ \bar{\nu}(dz) = \ind_{\{|z|\ge (T-t)^{1/\alpha}\}}\nu(dz),
$$

Now, by independence of $(M_u)_{u \ge 0}$ and $(N_u)_{u\ge 0}$ and exploiting equation \eqref{REP_DENS_S}, we get:
\begin{equation}\label{DESINTEGRATION_DENS_LEVY_ITO}
\tilde{p}^{T,y}(t,T,x,y) =p_\mathcal{S}(T-t, \theta_{t,T}(y) - x)= \int_{\R^d} p_M(T-t, \theta_{t,T}(y) - x - \xi ) P_{N_{T-t}}(d\xi).
\end{equation}

From the definition of the generators, the operator naturally splits into three parts,
for a test function $\varphi$:
\begin{eqnarray*}
\Big(L_t(x,\nabla_x) - L_t(\theta_{t,T}(y),\nabla_x) \Big)\varphi(x) = \langle \nabla_x \varphi(x), F(t,x)-F(t,\theta_{t,T}(y)) \rangle\\
+\int_{\R^d} \Big(\varphi(x+z) - \varphi(x) - \langle \nabla \varphi(x),z\rangle \Big) \ind_{\{|z|\le (T-t)^{1/\alpha} \}}(\nu_t(x,dz)-\nu_t(\theta_{t,T}(y),dz))\\
+\int_{\R^d} \Big( \varphi(x+z) - \varphi(x) \Big)\ind_{\{|z|\ge (T-t)^{1/\alpha} \}}(\nu_t(x,dz)-\nu(\theta_{t,T}(y),dz)).
\end{eqnarray*}
Recall that we defined $\nu_t(\xi,A)= \nu\{z\in \R^d; \sigma(t,\xi)z \in A \}.$
Also, observe that by symmetry of $\nu$, we changed the cut-off function to exhibit the intrinsic time-scale.
Note that the first order term in the operator is present only in the case $\alpha > 1$.
Otherwise, we assumed that $F=0$.

Thus, a derivative along $x$ of $\tilde{p}(t,T,x,y)$ acts in fact on the density of the martingale, and we have to control  $\nabla_x p_M(T-t, \theta_{t,T}(y) - x - \xi )$.
Borrowing the notations of the proof of Lemma 2 in \cite{szto:10}, we have:
$$
p_M(T-t, \theta_{t,T}(y) - x - \xi) = (T-t)^{-d/\alpha} g_{T-t}( (T-t)^{-1/\alpha} \theta_{t,T}(y) - x - \xi).
$$
Formally, since we chose to split at the characteristic time-scale $(T-t)^{1/\alpha}$, the density of the martingale presents a time space separation, and defining $g_{T-t}$ as above allows to have estimates independent of $T-t$.
Thus, uniformly for all $T-t>0$, $g_{T-t}(\cdot)$ is in Schwartz's class.
Therefore, we have for all $m\ge 1$:
$$
|\nabla_x p_M(T-t, \theta_{t,T}(y) - x - \xi)| \le  \frac{1}{(T-t)^{1/\alpha}} C_m (T-t)^{-d/\alpha} \left( 1+\frac{|\theta_{t,T}(y) - x - \xi|}{(T-t)^{1/\alpha}}\right)^{-m},
$$
and we recovered Lemma 2 in \cite{szto:10}, up to the singularity $1/(T-t)$.
Consequently, integrating this estimate against the law of the large jumps, we obtain the following estimate on the gradient of the density:
$$
\nabla_x \tilde{p}(t,T,x,y) \le \frac{C}{T-t}  \frac{(T-t)^{-d/\alpha}}{ \left(1+\frac{|\theta_{t,T}(y)-x|}{(T-t)^{1/\alpha}} \right)^{\alpha+\gamma}}  \bar{q}(|\theta_{t,T}(y)-x|) 
$$

Thus, a derivative on the density yields a singularity in $t^{-1/\alpha}$ which is integrable when $\alpha >1$.
Specifically, when the drift $F$ is bounded and $\alpha>1$, we write:

\begin{eqnarray*}
|\langle \nabla_x p(t,T,x,y), F(x)-F(\theta_{t,T}(y)) \rangle| \le  C2|F|_{\infty} |\nabla_x p(t,T,x,y)| \\
\le C (T-t)^{-1/\alpha} \frac{(T-t)^{-d/\alpha}}{ \left(1+\frac{|\theta_{t,T}(y)-x|}{(T-t)^{1/\alpha}} \right)^{\alpha+\gamma}}  \bar{q}(|\theta_{t,T}(y)-x|) .
\end{eqnarray*}

On the other hand, when $F$ is unbounded, we have to deteriorate the tempering function, exploiting the Lipschitz property of $F$:

\begin{eqnarray*}
|\langle \nabla_x p(t,T,x,y), F(x)-F(\theta_{t,T}(y)) \rangle| &\le&  C|x-\theta_{t,T}(y)| |\nabla_x p(t,T,x,y)| \\
&\le& C (T-t)^{-1/\alpha} \frac{(T-t)^{-d/\alpha}}{ \left(1+\frac{|\theta_{t,T}(y)-x|}{(T-t)^{1/\alpha}} \right)^{\alpha+\gamma}} |x-\theta_{t,T}(y)| \bar{q}(|\theta_{t,T}(y)-x|) \\
&\le& C (T-t)^{-1/\alpha} \frac{(T-t)^{-d/\alpha}}{ \left(1+\frac{|\theta_{t,T}(y)-x|}{(T-t)^{1/\alpha}} \right)^{\alpha+\gamma}}  Q(|\theta_{t,T}(y)-x|).
\end{eqnarray*}

\begin{remark}
Let us mention here the works of Knopova and Kulik \cite{knop:kuli:14}, where the authors prove that the gradient of the density of a rotationally invariant stable process actually have the estimate:
$$
p_Z(1,x)\le \frac{1}{|x|^{d+\alpha+1}}.
$$
In other words, the exponent is actually bigger in this case. This fact allows to compensate the singularity of the gradient part when $\alpha \le 1$.
This very important result is very specific to the case of the rotationally  invariant stable process, as it is the particular form of the L\'evy measure that allows the growth of the exponent.
In our general setting, we have been unable to prove a similar estimate, and had to restrict ourselves to remove the drift when $\alpha \le 1$.
Observe nonetheless that this condition is relatively classic in the literature dealing with general stable process (see e.g. Kolokolstov \cite{kolo:00}).
\end{remark}

Consider now the integro-differential part of the kernel.
For the small jumps part, once again, we observe that the operator acts on the variable $x$, and thus can be put on the density of the martingale.
We use the representation in terms of symbols, denoting by $\phi_t(x,p)$ the symbol of an integro-differential operator $\Phi_t(x,\nabla_x)$:
\begin{eqnarray*}
&&\Phi_t(x,\nabla_x) p_M\Big(T-t, \sigma(\theta_{t,T}(y))^{-1}(\theta_{t,T}(y)-x)-\xi \Big)\\
& = &
\frac{1}{(2\pi)^d} \int_{\R^{d}} e^{-i \langle \zeta, \sigma(\theta_{t,T}(y))^{-1}(\theta_{t,T}(y)-x) - \xi \rangle} \phi_t(x, -(\sigma(\theta_{t,T}(y))^{-1})^* \zeta(T-t)^{-1/\alpha}) \hat{g}_{T-t}(\zeta) d\zeta.
\end{eqnarray*}

Now, when $\Phi_t(x,\nabla_x)= L_t^M(x,\nabla_x) - L_t^M(\theta_{t,T}(y),\nabla_x)$, the small jump part of the difference of the generators, that is:
\begin{eqnarray*}
&&\Big(L_t^M(x,\nabla_x) - L_t^M(\theta_{t,T}(y),\nabla_x)\Big)\varphi(x)\\
&=&\int_{\R^d} \Big(\varphi(x+z) - \varphi(x) - \langle \nabla \varphi(x),z\rangle \Big) \ind_{\{|z|\le (T-t)^{1/\alpha} \}}(\nu(x,dz)-\nu(\theta_{t,T}(y),dz)),
\end{eqnarray*}
 denoting by $l_t^M(x,\zeta) - l_t^M(\theta_{t,T}(y),\zeta)$ the corresponding symbol, we have that:
$$
|l_t^M(x,\zeta) - l_t^M(\theta_{t,T}(y),\zeta)| \le C \delta \wedge |\theta_{t,T}(y)-x|^{\eta(\alpha\wedge1)} |\zeta|^\alpha.
$$
Moreover, this quantity $\phi_t(x, -(\sigma(\theta_{t,T}(y))^{-1})^* \zeta(T-t)^{-1/\alpha}) \hat{g}_{T-t}(\zeta)$ is smooth (in its $\zeta$ argument) because of the truncation (see Theorem 3.7.13 in Jacob \cite{jacob1}).
Consequently, 
$$
\frac{T-t}{\delta\wedge |\theta_{t,T}(y)-x|^{\eta(\alpha\wedge1)}}\phi_t(x, -(\sigma(\theta_{t,T}(y))^{-1})^* \zeta(T-t)^{-1/\alpha}) \hat{g}_{T-t}(\zeta)
$$
is infinitely differentiable as a function of $\zeta$ and uniformly bounded with all its derivatives.
Therefore, it is in Schwartz's space as well as its Fourier inverse. We have $\forall m>1$:
\begin{eqnarray*}
&&\left|(L_t^M(x,\nabla_x) - L_t^M(\theta_{t,T}(y),\nabla_x))p_M\Big(T-t, \sigma(\theta_{t,T}(y))^{-1}(\theta_{t,T}(y)-x)-\xi \Big)\right|\\
 &&\le C \frac{\delta\wedge |\theta_{t,T}(y)-x|^{\eta(\alpha\wedge1)}}{T-t} (T-t)^{-d/\alpha} \left(1 + \frac{|\sigma(\theta_{t,T}(y))^{-1}(\theta_{t,T}(y)-x)-\xi|}{(T-t)^{1/\alpha}} \right)^{-m}.
\end{eqnarray*}

Consequently, we recovered Lemma 2 in \cite{szto:10} for the parametrix kernel, up to the additional multiplicative term
$\big(\delta\wedge |\theta_{t,T}(y)-x|^{\eta(\alpha\wedge1)} \big)(T-t)^{-1}$, which is the expected singularity for the kernel (see Kolokoltsov \cite{kolo:00}).
The upper bound follows from this upper bound and the control of the measure of the balls for $P_{N_{T-t}}$  similarly to the derivation of the upper bound for the density, see Corollary 6 in \cite{szto:10} and the proof of Theorem 1 in  \cite{szto:10}. 
The upper bound for the small jumps part of the kernel follows.

Finally, for large jumps, we see %see from equation $(8)$ in \cite{szto:10} 
that the measure $\ind_{\{|\xi|\ge (T-t)^{1/\alpha} \}}(\nu(x,d\xi)-\nu(\theta_{t,T}(y),d\xi))$ is no more singular.
Thus, we can write:
\begin{eqnarray*}
&&\left|\int_{\R^d} \Big( \tilde{p}(t,T,x+\xi,y) -\tilde{p}(t,T,x,y) \Big)\ind_{\{|\xi|\ge (T-t)^{1/\alpha} \}}(\nu(x,d\xi)-\nu(\theta_{t,T}(y),d\xi))\right|\\
&&\le \int_{\R^d} \Big| \tilde{p}(t,T,x+\xi,y) -\tilde{p}(t,T,x,y) \Big|\ind_{\{|\xi|\ge (T-t)^{1/\alpha} \}}|\nu(x,d\xi)-\nu(\theta_{t,T}(y),d\xi)|\\
&&\le  
\delta\wedge|x-\theta_{t,T}(y)|^{\eta(\alpha \wedge 1)} \bigg(\int_{S^{d-1}}\int_0^{+\infty}  \tilde{p}(t,T,x+s\varsigma,y) \ind_{\{s\ge (T-t)^{1/\alpha} \}} \frac{\bar{q}(s)}{s^{1+\alpha}}ds \mu(d\varsigma) \\
&& \qquad+ \frac{1}{T-t}\tilde{p}(t,T,x,y) \bigg).
\end{eqnarray*}
For the last inequality, we exploited \textbf{[H-5]}.
%To get the last inequality, we used the fact that $\sigma$ is $\eta$-H\"older continuous.
%We focus on the remaining integral term above, and write from the temperation hypotheses on $\nu$:
%$$
%\int_{\R^d}  \tilde{p}(t,x+\xi,y) \ind_{\{|\xi|\ge t^{1/\alpha} \}}\nu(d\xi) 
%\le\int_0^{+\infty} \int_{S^{d-1}}  \tilde{p}(t,x+s\varsigma,y) \ind_{\{s\ge t^{1/\alpha} \}} \frac{\bar{q}(s)}{s^{1+\alpha}}ds \mu(d\varsigma)
%$$
We focus on the remaining integral term above.
When the diagonal regime holds, the estimate is straightforward, as we can directly bound $ \tilde{p}(t,T,x+s\varsigma,y)\le C(T-t)^{-d/\alpha}\le C \bar{p}(t,T,x,y)$. The integral then yields the singularity $(T-t)^{-1}$.
Therefore, we assume that $|\theta_{t,T}(y)-x| \ge (T-t)^{1/\alpha}$.
The regime of $\tilde{p}(t,T,x+s\varsigma,y) $ is given by $|\theta_{t,T}(y)-x-s\varsigma|$.
Thus, thanks to the triangle inequality, when $|\theta_{t,T}(y)-x|\le 1/2 s$, or when $s \le 1/2|\theta_{t,T}(y)-x|$,
the density $\tilde{p}(t,T,x+s\varsigma,y) $ is off-diagonal with $\tilde{p}(t,T,x+s\varsigma,y) \le C \bar p(t,x,y)$.

Consequently, the problematic case is when $s\asymp |\theta_{t,T}(y)-x|$. 
Indeed, in this case, $\tilde{p}(t,T,x+s\varsigma,y) $ can be in diagonal regime, whereas $\tilde{p}(t,T,x,y)$ is still in the off-diagonal regime.

Assume first that \textbf{[H-1-a]} holds, and let us simply denote $\frac{d\mu}{d\varsigma}(\varsigma)$ the density of $\mu$ on the sphere.
Then, we have:
\begin{eqnarray*}
&&\int_{1/2|\theta_{t,T}(y)-x|}^{3/2|\theta_{t,T}(y)-x|}  \int_{S^{d-1}}  \tilde{p}(t,T,x+s\varsigma,y) \ind_{\{s\ge (T-t)^{1/\alpha} \}} \frac{\bar{q}(s)}{s^{1+\alpha}}ds \frac{d\mu}{d\varsigma}(\varsigma)d\varsigma\\
&=&\int_{1/2|\theta_{t,T}(y)-x|}^{3/2|\theta_{t,T}(y)-x|}  \int_{S^{d-1}}  \tilde{p}(t,T,x+s\varsigma,y) \ind_{\{s\ge (T-t)^{1/\alpha} \}} \frac{\bar{q}(s)}{s^{\alpha+d}} \frac{d\mu}{d\varsigma}(\varsigma) s^{d-1}dsd\varsigma.
\end{eqnarray*}
Now, since $s\asymp |\theta_{t,T}(y)-x|$, we can take $\frac{\bar{q}(s)}{s^{\alpha+d}}$ out of the integral.
Also, the density $\frac{d\mu}{d\varsigma}(\varsigma)$ is bounded, so that we obtain:

\begin{eqnarray*}
&&\int_{1/2|\theta_{t,T}(y)-x|}^{3/2|\theta_{t,T}(y)-x|}  \int_{S^{d-1}}  \tilde{p}(t,T,x+s\varsigma,y) \ind_{\{s\ge (T-t)^{1/\alpha} \}} \frac{\bar{q}(s)}{s^{1+\alpha}}ds \frac{d\mu}{d\varsigma}(\varsigma)d\varsigma\\
&\le& C\frac{\bar{q}(|\theta_{t,T}(y)-x|)}{|\theta_{t,T}(y)-x|^{\alpha+d}}\int_0^{+\infty} \int_{S^{d-1}}  \tilde{p}(t,T,x+s\varsigma,z) \ind_{\{s\ge (T-t)^{1/\alpha} \}} s^{d-1}dsd\varsigma.
\end{eqnarray*}
Finally, the remaining integral can be bounded by some constant as the integral of the density .
Consequently, we obtained:

\begin{eqnarray*}
 \int_{1/2|\theta_{t,T}(y)-x|}^{3/2|\theta_{t,T}(y)-x|}  \int_{S^{d-1}}  \tilde{p}(t,T,x+s\varsigma,y) \ind_{\{s\ge (T-t)^{1/\alpha} \}} \frac{\bar{q}(s)}{s^{1+\alpha}}ds \frac{d\mu}{d\varsigma}(\varsigma)d\varsigma \\
 \le C \frac{\bar{q}(|\theta_{t,T}(y)-x|)}{|\theta_{t,T}(y)-x|^{\alpha+d}} = C\frac{1}{T-t} \frac{T-t}{|\theta_{t,T}(y)-x|^{\alpha+d}} \bar{q}(|\theta_{t,T}(y)-x|),
\end{eqnarray*}
which is the off diagonal estimate for $\bar{p}$ when \textbf{[H-1a]} holds, up to the singularity $1/(T-t)$.

Now, assume that \textbf{[H-1-b]} holds.
In this case, we can take out $\frac{\bar{q}(s)}{s^{1+\alpha}}$ and integrate a density to get:
\begin{eqnarray*}
\int_{1/2|\theta_{t,T}(y)-x|}^{3/2|\theta_{t,T}(y)-x|}  \int_{S^{d-1}}  \tilde{p}(t,T,x+s\varsigma,y) \ind_{\{s\ge (T-t)^{1/\alpha} \}} \frac{\bar{q}(s)}{s^{1+\alpha}}ds \mu(d\varsigma)\\
%
%\int_{\R^d}  \tilde{p}^y(t,x+\xi,z) \ind_{\{|z|\ge t^{1/\alpha} \}}\nu(d\xi) \\
\le \frac{\bar{q}(|\theta_{t,T}(y)-x|)}{|\theta_{t,T}(y)-x|^{1+\alpha}}\int_0^{+\infty} \int_{S^{d-1}}  \tilde{p}^y(t,T,x+s\varsigma,z) \ind_{\{s\ge (T-t)^{1/\alpha} \}} ds \mu(d\varsigma) \\
\le C \frac{\bar{q}(|\theta_{t,T}(y)-x|)}{|\theta_{t,T}(y)-x|^{1+\alpha}}.
\end{eqnarray*}

Rewriting the right hand side to make the time dependencies  appear : 
\begin{eqnarray*}
\frac{\bar{q}(|\theta_{t,T}(y)-x|)}{|\theta_{t,T}(y)-x|^{1+\alpha}}&=&\frac{1}{T-t}\frac{(T-t)^{1+\frac{\gamma-d}{\alpha}}}{|\theta_{t,T}(y)-x|^{1+\alpha}}\bar{q}(|\theta_{t,T}(y)-x|) \times (T-t)^{\frac{d-\gamma}{\alpha}}\\
 &\le& C \frac{1}{T-t}\frac{(T-t)^{1+\frac{\gamma-d}{\alpha}}}{|\theta_{t,T}(y)-x|^{1+\alpha}}\bar{q}(|\theta_{t,T}(y)-x|).
\end{eqnarray*}
In the last inequality, we recall that $\gamma \le d$, so that $(T-t)^{\frac{d-\gamma}{\alpha}} \le 1$.
Now, we write:
$$
\frac{(T-t)^{1+\frac{\gamma-d}{\alpha}}}{|\theta_{t,T}(y)-x|^{1+\alpha}}\bar{q}(|\theta_{t,T}(y)-x|)
=\frac{(T-t)^{1+\frac{\gamma-d}{\alpha}}}{|\theta_{t,T}(y)-x|^{\alpha+\gamma}} \times |\theta_{t,T}(y)-x|^{\gamma-1}\bar{q}(|\theta_{t,T}(y)-x|).
$$
Recalling we denoted by $Q$:
$$Q(|\theta_{t,T}(y)-x|) = \max(1, |\theta_{t,T}(y)-x|,|\theta_{t,T}(y)-x|^{\gamma-1})\bar{q}(|\theta_{t,T}(y)-x|),$$

we finally obtain:
$$
C \frac{1}{T-t}\frac{(T-t)^{1+\frac{\gamma-d}{\alpha}}}{|\theta_{t,T}(y)-x|^{1+\alpha}}\bar{q}(|\theta_{t,T}(y)-x|) \le C \frac{1}{T-t}\frac{(T-t)^{1+\frac{\gamma-d}{\alpha}}}{|\theta_{t,T}(y)-x|^{\alpha+\gamma}}Q(|\theta_{t,T}(y)-x|).
$$

%
%The estimate on the kernel follows from the fact that thanks to equation \eqref{CORRECTION_PUISSANCE} in assumption \textbf{[H-1]}, we have:
%$$
%\frac{\bar{q}(|\theta_{0,t}(y)-x|)}{|\theta_{0,t}(y)-x|^{1+\alpha}} \le C \frac{\bar{q}(C|\theta_{0,t}(y)-x|)}{|\theta_{0,t}(y)-x|^{\alpha+\gamma}}.
%$$
In other words, we can correct the wrong decay by deteriorating the temperation.
Consequently, the global upper bound for the kernel is the one announced.

To sum up, we deteriorated the tempering function in the following cases:
\begin{itemize}
\item when the drift is bounded and \textbf{[H-1b]} holds.
In this case, we replaced $\bar{q}$ by $Q(s) = \max(1,s^{\gamma-1})\bar{q}(s)$.
\item when the drift is unbounded and \textbf{[H-1a]} holds.
In this case, we replaced $\bar{q}$ by $Q(s) = \max(1,s)\bar{q}(s)$.
\item when the drift is unbounded and \textbf{[H-1b]} holds.
In this case, we replaced $\bar{q}$ by $Q(s) = \max(1,s,s^{\gamma-1})\bar{q}(s)$.
\end{itemize}
Note that when the drift is bounded and \textbf{[H-1a]} holds, we do not need to deteriorate the tempering function.

\end{proof}

\begin{remark}\label{ROT_INV_STABLE_OK}

In the above proof, the temperation only serves to compensate the bad concentration in the generator.
Also, we see that when the spectral measure $\mu$ dominating the L\'evy measure $\nu$ has a density on the sphere, then, the large jump part of the difference of the generators becomes:
$$
\int_{\R^d}  \tilde{p}(t,T,x+\xi,y) \ind_{\{|\xi|\ge (T-t)^{1/\alpha} \}}\nu(d\xi) 
\le%\int_0^{+\infty} \int_{S^{d-1}}  
C\int_{\R^d}\tilde{p}(t,T,x+\xi,y) \ind_{\{|\xi|\ge (T-t)^{1/\alpha} \}} \frac{\bar{q}(|\xi|)}{|\xi|^{d+\alpha}}d\xi.
$$
Thus, when $s\asymp |\theta_{t,T}(y)-x|$, as in the last case discussed above, we have directly the good concentration index and the temperation is not needed.
In particular, when $\bar{q}=1$, we recovered results in Kolokolstov \cite{kolo:00}.
\end{remark}

We have obtained the same type of estimate on the kernel and on the frozen density.
Let us observe that the upper bound satisfies a "semi group" property in the following sense.

\begin{lemma}\label{SEMI_GROUP_PROP}
Fix $\tau\in [t,T]$. Let us denote 
$$
\bar{p}_C(t,T,x,y) = \frac{(T-t)^{-d/\alpha}}{\left( 1+ \frac{|\theta_{t,T}(y)-x|}{(T-t)^{1/\alpha}}\right)^{\alpha+\gamma}} Q(C|\theta_{t,T}(y)-x|).
$$

Let $C_1,C_2>0$. For all $\tau \in [t,T]$, there exists $C_3>0$:
$$
\int_{\R^d} \bar{p}_{C_1}(t,\tau,x,z) \bar{p}_{C_2}(\tau,T ,z,y) dz \le C \bar{p}_{C_3}(t,T,x,y).
$$
\end{lemma}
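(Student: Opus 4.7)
The goal is essentially a Chapman–Kolmogorov–type bound for the majorant $\bar p_C$, adapted to the deformed geometry induced by the flow $\theta$. My plan is to reduce the flow transport to a translation by Lipschitz control, split the spatial integration into two regions via the triangle inequality, and handle the multiplicative temperation $Q$ separately using its monotonicity.

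The first step is a flow reduction. Since $F$ is Lipschitz, Gronwall's lemma applied to $\theta_{s,\tau}$ gives that $\theta_{t,\tau}$ and its inverse $\theta_{\tau,t}$ are bi-Lipschitz with constants bounded by $e^{LT}$. Using the cocycle identity $\theta_{t,T}(y)=\theta_{t,\tau}(\theta_{\tau,T}(y))$ and $\theta_{t,\tau}(\theta_{\tau,t}(x))=x$, this yields
\begin{equation*}
|\theta_{t,T}(y)-x|\;\asymp\;|\theta_{\tau,T}(y)-\theta_{\tau,t}(x)|,
\end{equation*}
uniformly in $\tau\in[t,T]$. Thus the spatial decay we want on the right–hand side is governed by $r:=|\theta_{\tau,T}(y)-\theta_{\tau,t}(x)|$, and both factors in the integrand are naturally written in these variables: $\bar p_{C_1}(t,\tau,x,z)$ involves the distance $|z-\theta_{\tau,t}(x)|$, and $\bar p_{C_2}(\tau,T,z,y)$ involves $|\theta_{\tau,T}(y)-z|$.

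Now the main step: split the integral over $\R^d$ into $D_1 = \{z:|z-\theta_{\tau,t}(x)|\ge r/2\}$ and $D_2=\{z:|\theta_{\tau,T}(y)-z|\ge r/2\}$, which covers $\R^d$ by the triangle inequality. By symmetry, I treat only $D_2$. On $D_2$, monotonicity of $Q$ gives $Q(C_2|\theta_{\tau,T}(y)-z|)\le Q(C_3|\theta_{t,T}(y)-x|)$ with $C_3=C_2/(2K)$, where $K$ is the Lipschitz constant of the flow, while the remaining tempering factor $Q(C_1|z-\theta_{\tau,t}(x)|)$ is bounded by $Q(0^+)<+\infty$. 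Moreover the spatial denominator of $\bar p_{C_2}$ on $D_2$ obeys
\begin{equation*}
\Bigl(1+\tfrac{|\theta_{\tau,T}(y)-z|}{(T-\tau)^{1/\alpha}}\Bigr)^{\alpha+\gamma}\;\ge\;c\Bigl(1+\tfrac{r}{(T-\tau)^{1/\alpha}}\Bigr)^{\alpha+\gamma}.
\end{equation*}

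What is left is a convolution of two untempered stable–like kernels, whose bound is a classical parametrix computation. Concretely, after pulling out the $Q$-factors one is reduced to showing
\begin{equation*}
\int_{\R^d}\frac{(\tau-t)^{-d/\alpha}}{\bigl(1+\tfrac{|z-\theta_{\tau,t}(x)|}{(\tau-t)^{1/\alpha}}\bigr)^{\alpha+\gamma}}\;\frac{(T-\tau)^{-d/\alpha}}{\bigl(1+\tfrac{|\theta_{\tau,T}(y)-z|}{(T-\tau)^{1/\alpha}}\bigr)^{\alpha+\gamma}}\,dz\;\le\;C\,\frac{(T-t)^{-d/\alpha}}{\bigl(1+\tfrac{r}{(T-t)^{1/\alpha}}\bigr)^{\alpha+\gamma}}.
\end{equation*}
On $D_2$ one uses the spatial decay of $\bar p_{C_2}$ (just derived) to extract a factor $C(T-\tau)^{(\alpha+\gamma-d)/\alpha}r^{-(\alpha+\gamma)}$ in the off-diagonal regime $r\gtrsim (T-t)^{1/\alpha}$, or simply $C(T-\tau)^{-d/\alpha}$ in the diagonal regime $r\lesssim (T-t)^{1/\alpha}$; the remaining integral $\int \bar p_{C_1}(t,\tau,x,z)\,dz$ is finite since $\alpha+\gamma>d$ (guaranteed under \textbf{[H-1a]} trivially with $\gamma=d$, and imposed under \textbf{[H-1b]}). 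An analogous estimate on $D_1$ integrates $\bar p_{C_2}$ and exploits the decay of $\bar p_{C_1}$.

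The main obstacle is the asymmetry of the time scales $\tau-t$ and $T-\tau$ relative to $T-t$. Pointwise bounds of the form $(T-\tau)^{-d/\alpha}\le C(T-t)^{-d/\alpha}$ fail when $\tau$ is close to $T$. This is resolved by always pairing the kernel whose time scale can blow up with the \emph{integration} (since $\int\bar p\,\le C$ independently of the time argument), and keeping the other kernel pointwise. Since $\max(\tau-t,T-\tau)\ge (T-t)/2$, the pointwise factor always comes with an exponent comparable to $(T-t)^{-d/\alpha}$; combined with the off-diagonal spatial bound on $D_1\cup D_2$ this produces exactly $\bar p_{C_3}(t,T,x,y)$ up to constants. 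The integrability constraint $\alpha+\gamma>d$ is the only non-trivial hypothesis invoked, and is precisely the one built into $\textbf{[H-1]}$.
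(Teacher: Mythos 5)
Your proposal takes essentially the same route as the paper: reduce the flow transport to a bi-Lipschitz change of coordinates, split $\R^d$ by the triangle inequality into pieces where one of the two spatial arguments is comparable to $|\theta_{t,T}(y)-x|$, and on each piece pull the corresponding kernel out of the integral pointwise while integrating the other to a constant. You in fact go further than the paper in flagging a genuine subtlety: the paper justifies the pointwise comparison purely by monotonicity in the spatial argument, which by itself does not let one replace a time parameter $T-\tau$ (or $\tau-t$) by $T-t$; as you note, $(T-\tau)^{-d/\alpha}\le C(T-t)^{-d/\alpha}$ fails when $\tau$ is close to $T$.

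The resolution you propose, however, does not quite mesh with the spatial split as you have set it up. You say to always pull out pointwise the kernel whose time scale is at least $(T-t)/2$. But $D_1,D_2$ are determined by the position of $z$, not by $\tau$. If $\tau$ is close to $T$ and $z\in D_2\setminus D_1$, the kernel with controlled time scale is $\bar p_{C_1}$, yet on $D_2\setminus D_1$ one only has $|z-\theta_{\tau,t}(x)|<r/2$, so $\bar p_{C_1}$ carries no off-diagonal spatial decay there, and its pointwise bound $(\tau-t)^{-d/\alpha}\asymp(T-t)^{-d/\alpha}$ overshoots $\bar p_{C_3}(t,T,x,y)$ whenever $|\theta_{t,T}(y)-x|$ is much larger than $(T-t)^{1/\alpha}$. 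The clean fix is a dichotomy on the target regime rather than on $\tau$. If $|\theta_{t,T}(y)-x|\ge c(T-t)^{1/\alpha}$, keep your original pairing: on $D_2$ the pulled-out kernel $\bar p_{C_2}$ satisfies $|\theta_{\tau,T}(y)-z|\ge c(T-t)^{1/\alpha}\ge c(T-\tau)^{1/\alpha}$ and is thus in off-diagonal form $(T-\tau)^{(\gamma+\alpha-d)/\alpha}|\theta_{\tau,T}(y)-z|^{-\alpha-\gamma}Q(\cdot)$; since $\gamma+\alpha>d$ the prefactor is \emph{increasing} in $T-\tau$, so replacing $T-\tau$ by $T-t$ is legitimate for every $\tau$, and symmetrically on $D_1$. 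If instead $|\theta_{t,T}(y)-x|\le c(T-t)^{1/\alpha}$, no spatial split is needed: then $\bar p_{C_3}(t,T,x,y)\asymp(T-t)^{-d/\alpha}$, and one bounds the kernel with the larger time scale pointwise by $\big(\max(\tau-t,T-\tau)\big)^{-d/\alpha}\le C(T-t)^{-d/\alpha}$ and integrates the other to a constant. With this dichotomy in place your argument closes, and it is in fact more explicit on this point than the paper's own proof, which leaves the time-scale comparison implicit.
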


\begin{proof}
From the triangle inequality, we can write:
\begin{eqnarray*}
|\theta_{t,T}(y)-x| &\le& |\theta_{t,T}(y)-\theta_{t,\tau}(z)| + |\theta_{t,\tau}(z)-x|\\
&\le&| \theta_{t,\tau}(\theta_{\tau,T}(y))-\theta_{t,\tau}(z) )| + |\theta_{t,\tau}(z)-x|\\
&\le& | \theta_{\tau,T}(y)- z| + |\theta_{t,\tau}(z)-x|.
\end{eqnarray*}
The last inequality follows from the Lipschitz property of the flow.
This means that we have either $| \theta_{\tau,T}(y)- z| \ge C |\theta_{t,T}(y)-x|$ or $|\theta_{t,\tau}(z)-x|\ge C |\theta_{t,T}(y)-x|$.
Now, since for all $t,T\in \R_+$, $u\mapsto \frac{(T-t)^{-d/\alpha}}{\left( 1+ \frac{u}{(T-t)^{1/\alpha}}\right)^{\alpha+\gamma}} Q(Cu)$ is decreasing, we have either:
$$
\bar{p}_{C_1}(t,\tau,x,z) \le C  \bar{p}_{C_3}(t,T,x,y) \mbox{ or }  \bar{p}_{C_2}(\tau,T ,z,y) \le C  \bar{p}_{C_3}(t,T,x,y).
$$

Taking the corresponding density out of the integral, we can integrate the remaining density to one.
The conclusion of the Lemma follows.

\end{proof}

We exhibit here some smoothing properties in time of the parametrix kernel.
These properties will become crucial when investigating the convergence of the series \eqref{SERIE_PARAM_TEMP} on the one hand and the lower bound of Theorem \ref{MTHM_TEMP} on the other.
The following lemma is a regularizing effect in time of the parametrix kernel.
\begin{lemma}\label{SMTH_H}
There exists $C>1$, $\omega>0$ s.t. for all $\tau\in (t,T)$, $(x,y)\in (\R^{d})^2 $:
\begin{eqnarray*}
\int_{\R^d} \delta \wedge |x-\theta_{\tau,t}(z)|^{\eta(\alpha\wedge 1)} \bar{p}(t,\tau,x,z) dz 
&\le& C (\tau-t)^{\omega}, \\ \int_{\R^d} \delta \wedge |\theta_{\tau,T}(y)-z|^{\eta(\alpha\wedge 1)} \bar{p}(\tau,T,z,y) dz 
&\le& C (T-\tau)^{\omega}.
\end{eqnarray*}
As a corollary, we get that 
$$
\int_{t}^T d\tau\int_{\R^d} |H(\tau,T,z,y)|dz \le C[(T-t)^{\omega} +(T-t)^{1-1/\alpha}\ind_{\alpha>1} ].
$$
Thus, when integrated in time, the parametrix Kernel yields has a smoothing property in time.
%Without loss of generality, we write $\omega$ when we actually means $\omega $
\end{lemma}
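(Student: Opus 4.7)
The plan is to bound each of the two smoothing integrals by a positive power of the corresponding time increment via a diagonal/off-diagonal decomposition at the characteristic scale. I focus on the first integral; the second is symmetric upon swapping initial and terminal times. Using the Lipschitz continuity of the flow (Gronwall's lemma applied to the ODE defining $\theta$), the H\"older factor $\delta\wedge |x - \theta_{\tau,t}(z)|^{\eta(\alpha\wedge 1)}$ is bi-Lipschitz comparable to $\delta\wedge |z - \theta_{\tau,t}(x)|^{\eta(\alpha\wedge 1)}$, so after the change of variable $u = z - \theta_{\tau,t}(x)$ I split $\R^d$ into the diagonal region $D = \{|u|\le (\tau-t)^{1/\alpha}\}$ and its complement.

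On $D$, the H\"older factor is bounded by $C(\tau-t)^{\eta(\alpha\wedge 1)/\alpha}$, and $\int_{\R^d} \bar p(t,\tau,x,z)\,dz \le C$ uniformly in $\tau-t$ (which uses $\alpha+\gamma>d$ and the boundedness of the tempering), so this piece contributes $C(\tau-t)^{\eta(\alpha\wedge 1)/\alpha}$. On the complementary region, the tail bound $\bar p(t,\tau,x,z) \le C(\tau-t)^{(\alpha+\gamma-d)/\alpha}|u|^{-(\alpha+\gamma)}Q(|u|)$ reduces the computation to the radial integral $\int_{(\tau-t)^{1/\alpha}}^{\infty}(\delta\wedge r^{\eta(\alpha\wedge 1)}) Q(r) r^{d-1-\alpha-\gamma}\,dr$. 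Splitting at $r=\delta^{1/(\eta(\alpha\wedge 1))}$: the tail beyond this threshold converges (using $\alpha+\gamma>d$ and the decay of $Q$), while the head contributes at most $C\max\bigl(1,(\tau-t)^{(\eta(\alpha\wedge 1)-(\alpha+\gamma-d))/\alpha}\bigr)$ depending on the sign of $\eta(\alpha\wedge 1)-(\alpha+\gamma-d)$. Multiplying by the prefactor $(\tau-t)^{(\alpha+\gamma-d)/\alpha}$ and combining both regimes yields the bound with $\omega = \min(\eta(\alpha\wedge 1),\alpha+\gamma-d)/\alpha > 0$.

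For the corollary, Proposition \ref{EST_H_PROP} gives $|H(\tau,T,z,y)| \le \bar H(\tau,T,z,y) = C\bigl[(T-\tau)^{-1/\alpha}\ind_{\alpha>1} + (\delta\wedge|z-\theta_{\tau,T}(y)|^{\eta(\alpha\wedge 1)})(T-\tau)^{-1}\bigr]\bar p(\tau,T,z,y)$. Integrating in $z$, the first term contributes $C(T-\tau)^{-1/\alpha}\ind_{\alpha>1}$ (since $\int \bar p\,dz \le C$), while the second contributes $C(T-\tau)^{\omega-1}$ by the second smoothing estimate just established. Integration in $\tau \in [t,T]$ then produces $C[(T-t)^{1-1/\alpha}\ind_{\alpha>1}+(T-t)^{\omega}]$, as announced.

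The main obstacle is the off-diagonal regime of the smoothing estimate, where one must carefully split the radial integral according to the sign of $\eta(\alpha\wedge 1)-(\alpha+\gamma-d)$ and verify that after multiplication by the $(\tau-t)^{(\alpha+\gamma-d)/\alpha}$ prefactor the result is a strictly positive power of $\tau-t$. The non-degeneracy condition $\alpha+\gamma>d$ (postulated in \textbf{[H-1b]}, automatic under \textbf{[H-1a]} where $\gamma=d$) plays a dual role here: it ensures integrability of the tempered-stable profile at infinity and it produces a positive time prefactor in the tail bound on $\bar p$.
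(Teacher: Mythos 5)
Your argument reproduces the paper's own proof almost exactly: split at the characteristic scale $|u|\asymp(\tau-t)^{1/\alpha}$, bound the diagonal contribution by $(\tau-t)^{\eta(\alpha\wedge 1)/\alpha}$, pass to the radial integral on the off-diagonal region, and discriminate by the sign of $\eta(\alpha\wedge 1)-(\alpha+\gamma-d)$; the value $\omega=\min(\eta(\alpha\wedge 1),\alpha+\gamma-d)/\alpha$ you extract is the one implicit in the paper, and the derivation of the corollary from Proposition~\ref{EST_H_PROP} is the same. One small inaccuracy: in the borderline case $\eta(\alpha\wedge 1)=\alpha+\gamma-d$ the "head" of your radial integral is $\int_{(\tau-t)^{1/\alpha}}^{C}r^{-1}\,dr\asymp|\log(\tau-t)|$, which is not $O(\max(1,(\tau-t)^{0}))$ as your estimate asserts; after multiplying by the prefactor one gets $(\tau-t)^{(\alpha+\gamma-d)/\alpha}|\log(\tau-t)|$, forcing $\omega$ strictly below $\min(\cdot)/\alpha$ in that single case. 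The paper flags this logarithmic case explicitly; you should too, though the conclusion survives unchanged. (Also a notational slip: the tempering factor appearing inside $\bar p$ is $\bar q$, not $Q$; since it is bounded and only boundedness is used, this is harmless.)
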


\begin{proof}
The two estimates are similar, we shall only prove one.
Let us denote the quantity of interest:
$$
I= \int_{\R^d}dz \ \delta \wedge |x-\theta_{t,\tau}(z)|^{\eta(\alpha\wedge 1)} \frac{(\tau-t)^{-d/\alpha}}{ \left(1+\frac{|x-\theta_{t,\tau}(z)|}{(\tau-t)^{1/\alpha}} \right)^{\alpha+\gamma}} \bar{q}(|x-\theta_{t,\tau}(z)|).
$$
We split $\R^d=D_1 \cup D_2$, with
\begin{eqnarray*}
D_1&=&\{z \in \R^d ; |x-\theta_{t,\tau}(z)|\le C (\tau-t)^{1/\alpha}\} \\
D_2&=& \{z \in \R^d ; |x-\theta_{t,\tau}(z)|> C (\tau-t)^{1/\alpha} \}.
\end{eqnarray*}
We write $I_{D_i}$ for the integral over $z\in D_i$. For $z\in D_1$ we have:% recalling that $b$ is bounded,
$$
 \frac{(\tau-t)^{-d/\alpha}}{ \left(1+\frac{|x-\theta_{t,\tau}(z)|}{(\tau-t)^{1/\alpha}} \right)^{\alpha+\gamma}} \bar{q}(|x-\theta_{t,\tau}(z)|) \le (\tau-t)^{-d/\alpha}, \ |x-\theta_{t,\tau}(z)|^{\eta(\alpha\wedge 1)}\le (\tau-t)^{\eta(1\wedge 1/\alpha)}.
$$
Also, $D_1$ is a compact and its Lebesgue measure is exactly $(\tau-t)^{d/\alpha}$, thus, we obtain
$I_{D_1}\le (\tau-t)^{\eta(1\wedge 1/\alpha)}$.

When $z\in D_2$, we have:
\begin{eqnarray*}
I_{D_2} &\le&  \int_{D_2}dz \delta \wedge |x-\theta_{t,\tau}(z)|^{\eta(\alpha\wedge 1)} \frac{(\tau-t)^{1+\frac{\gamma-d}{\alpha}}}{|x-\theta_{t,\tau}(z)|^{\alpha+\gamma}} \\
%&=& \tau^{1+\frac{\gamma-d}{\alpha}} \int_{|x-\theta_{0,\tau}(z)|>\tau^{1/\alpha}} dz\frac{\delta \wedge |x-\theta_{0,\tau}(z)|^{\eta(\alpha\wedge 1)} }{|x-\theta_{0,\tau}(z)|^{\alpha+\gamma}}  \\
&\le&(\tau-t)^{1+\frac{\gamma-d}{\alpha}} \int_{|z- \theta_{\tau,t}(x)|> C (\tau-t)^{1/\alpha}} dz\frac{\delta \wedge |z- \theta_{\tau,t}(x)|^{\eta(\alpha\wedge 1)} }{|z- \theta_{\tau,t}(x)|^{\alpha+\gamma}}.
\end{eqnarray*}
Observe that we used the Lipschitz property of the flow to switch from $x-\theta_{t,\tau}(z)$ to $z- \theta_{\tau,t}(x)$.
This allows us to change variables and set $X=(z- \theta_{\tau,t}(x))/(\tau-t)^{1/\alpha}$, we get:
$$
I_{D_2}\le(\tau-t)^{1+\frac{\gamma-d}{\alpha}} \int_{|X|>1} \frac{(\tau-t)^{\eta(1\wedge \frac{1}{\alpha})}|X|^{\eta(\alpha\wedge 1)}}{|X|^{\alpha+\gamma}} dX .
$$
Thus, the result follows when $\alpha+\gamma-d > \eta(\alpha\wedge 1)$.
When it is not the case, we split again:
\begin{eqnarray*}
 \int_{|z- \theta_{\tau,t}(x)|>(\tau-t)^{1/\alpha}} \frac{\delta \wedge |z- \theta_{\tau,t}(x)|^{\eta(\alpha\wedge 1)} }{|z- \theta_{\tau,t}(x)|^{\alpha+\gamma}} dz
 = \int_{1 \ge |z- \theta_{\tau,t}(x)|>(\tau-t)^{1/\alpha}} \frac{\delta \wedge |z- \theta_{\tau,t}(x)|^{\eta(\alpha\wedge 1)} }{|z- \theta_{\tau,t}(x)|^{\alpha+\gamma}}dz \\
 + \int_{|z- \theta_{\tau,t}(x)|>1} \frac{\delta \wedge |z- \theta_{\tau,t}(x)|^{\eta(\alpha\wedge 1)} }{|z- \theta_{\tau,t}(x)|^{\alpha+\gamma}}dz .
\end{eqnarray*}
The second part of the right hand side is clearly a constant, bounding $\delta \wedge |z- \theta_{\tau,t}(x)|^{\eta(\alpha\wedge 1)}\le \delta$, since $\alpha+\gamma>d$.
For the first part, we change variable again to $Y=(z- \theta_{\tau,t}(x))$, which yields when $\alpha+\gamma-d < \eta(\alpha\wedge 1)$:
$$
\int_{1>|Y|>(\tau-t)^{1/\alpha}} \frac{|Y|^{\eta(\alpha\wedge 1)} }{|Y|^{\alpha+\gamma}} dY \le C.
$$

On the other hand, when $\alpha+\gamma-d = \eta(\alpha\wedge 1)$
$$
\int_{1>|Y|>(\tau-t)^{1/\alpha}} \frac{1}{|Y|^{d}} dY = [\log (|Y|)]_{(\tau-t)^{1/\alpha}}^{1} \le \frac{1}{\alpha}|\log(\tau-t)|.
$$
Thus the proof is complete.

\end{proof}

\subsection{Proof of Theorem \ref{MART_PB_THM_TEMP}: Uniqueness to the Martingale Problem}
\label{PROOF_MART_PB}
We are now in position to prove the uniqueness to the martingale problem.
Our approach relies on the smoothing properties of the parametrix kernel $H$.
This method for proving the uniqueness to the martingale problem has initially been developed by Bass and Perkins \cite{bass:perk:09}, and adapted to the parametrix setting by Menozzi \cite{meno:11}.
For the sake of consistency, we choose to keep the notations in Menozzi \cite{meno:11}, as the proof is essentially the same.

\begin{proof}
We focus on uniqueness.
The existence can be derived from standard compactness arguments (see e.g. Chapter 6 in Stroock and Varadhan \cite{stro:vara:79}, or Stroock \cite{stro:75}).
Suppose we are given two solutions $\mathbb \PP^1$ and $\mathbb \PP^2$ of the martingale problem associated with $L(\cdot, \nabla_\cdot)$, 
starting in $x$ at time $0$. We can assume w.l.o.g. that $t\le T$, the fixed time horizon.
Define for a bounded Borel function $f:[0,T]\times \R^{d}\rightarrow \R$, 
$$S^i f = \E^i \left( \int_t^T f(s,X_s) ds\right),\ i\in \{1,2\},$$
where $(X_t)_{t\ge 0}$ stands for the canonical process associated with $(\PP^i)_{i\in \{ 1,2\}} $. Let us specify that $S^if$ is \textit{a priori} only a linear functional and not a function since $\PP^i$ does not need to come from a Markov process. We denote: 
$$S^\Delta f = S^1f-S^2f,$$ 
and the aim of this section is to prove that $S^\Delta f=0$ for $f$ in a suitable class of test functions.
Since this functional characterises the law of the process, we can conclude to the uniqueness.

If $f \in \mathcal C^{1,2}_0([0,T)\times \R^{d},\R)$, since $(\PP^i)_{i\in \{1,2\}}$ both solve the martingale problem, we have:
\begin{equation}
%\label{REL_MART_P}
f(t,x) + \E^i\left(\int_t^T (\partial_s+L_s(x,\nabla_x) f(s,X_s) ds \right) =0,\ i\in\{ 1,2\}.
\end{equation}
As a consequence we thus have that for all $f\in C^{1,2}_0([0,T]\times \R^{nd},\R)$, 

\begin{eqnarray} \label{+-GENE GEL}
S^\Delta \Big( (\partial_s+L_s(x,\nabla_x)) f \Big)=0.
%&=& S^\Delta \Big( (\partial_s+L(\theta_{0,t}(y),\nabla_x)) f \Big) +S^\Delta \Big( (L(x,\nabla_x)-L(\theta_{0,t}(y),\nabla_x)) f \Big). 
\end{eqnarray}

We now want to apply \eqref{+-GENE GEL} to a suitable function $f$.
For a fixed point $y\in \R^{d}$ and a given $\varepsilon\ge 0$, introduce for all $ f\in \mathcal C^{1,2}_0([0,T)\times \R^{d},\R)$ the operator:
$$
\forall (t,x) \in [0,T)\times\R^{d},  G^{\varepsilon,y}f(t,x) = 
\int_t^T ds \int_{\R^{d}} dz \tilde{p}^{s+\varepsilon,y}(t,s, x,z) f(s,z).
$$
%We recall here that $\tilde p^{y}(s-t,x,z) $ stands for the density at time $s-t$ and point $z$ of the process $\tilde X^{y} $ defined in Section \ref{parametrix_setting} starting from $x$. One now easily checks that:
%\begin{equation}\label{relation differentielle}
%\forall (t,x,z) \in [0,s) \times (\R^{d})^2,  \Big(\partial_t + L(y,\partial_x)  \Big) \tilde{p}^{y} (s+\varepsilon,x,z) = 0, 
%\lim_{s+\varepsilon \downarrow 0} \tilde{p}^{y}(s+\varepsilon-t,x, \cdot) = \delta_x(.).
%\end{equation}`
%Formally, $G^{\varepsilon,y}$ should be interpreted as a "Green kernel". 
We define for all $f\in C^{1,2}_0([0,T)\times \R^{d},\R) $:
$$
%L(\theta_{0,t}(y),\nabla_x) G^{s+\varepsilon,y} 
M_{t,x}^{\varepsilon,y}f(t,x)= \int_t^Tds \int_{\R^{d}} dz L_s(\theta_{t,s+\varepsilon}(y),\nabla_x) \tilde{p}^{s+\varepsilon, y}(t,s,x,z)f(s,z).
$$
%Introducing for all $f\in C^{1,1}_0([0,T)\times \R^{nd},\R) $ the quantity
%\begin{equation}
%M^{\varepsilon,y}_{t,x} f(t,x) =\int_t^Tds \int_{\R^{nd}} dz \tilde{L}_t^{s+\varepsilon,y} \tilde{p}_\alpha^{s+\varepsilon,y}(t,s,x,z)f(s,z),
%\end{equation}
We derive from the backward Kolmogorov equation for the frozen density %from \eqref{relation differentielle} 
%and the definition of $G^{y} $ 
that the following equality holds:
\begin{equation}\label{rel:diff:G+M}
\partial_t G^{\varepsilon,y} f(t,x)+ M_{t,x}^{\varepsilon,y}f(t,x) = -f(t,x),\ \forall (t,x)\in [0,T)\times \R^{d}.
\end{equation}

Now, let  $h \in C^{1,2}_0([0,T)\times \R^{nd},\R)$ be an arbitrary function and define for all $(t,x)\in [0,T)\times \R^{nd} $:
\begin{eqnarray*}
\phi^{\varepsilon,y} (t,x) := \tilde{p}^{t+\varepsilon,y}(t,t+\varepsilon,x,y)h(t,y), %\\
\Psi_\varepsilon(t,x):= \int_{\R^{d}} dy G^{\varepsilon,y}(\phi^{\varepsilon,y})(t,x).
\end{eqnarray*}
Then, by semigroup property, we have:
\begin{eqnarray*}
\Psi_\varepsilon(t,x) &=& \int_{\R^{d}} dy \int_t^T ds \int_{\R^{d}}dz \tilde{p}^{s+\varepsilon,y}(t,s,x,z) \tilde{p}^{s+\varepsilon,y}(s,s+\varepsilon,z,y)h(s,y) \\
&=& \int_{\R^{d}}dy \int_t^T ds \tilde{p}^{s+\varepsilon,y}(t,s+\varepsilon,x,y) h(s,y).
\end{eqnarray*}

Hence, we can write:
\begin{eqnarray*}
\partial_t \Psi_\varepsilon(t,x)+ L_t(x,\nabla_x) \Psi_\varepsilon(t,x) 
= \int_{\R^d} dy \Big( \partial_t G^{\varepsilon,y} \phi^{\varepsilon,y}(t,x) + M_{t,x}^{\varepsilon,y}  \phi^{\varepsilon,y}(t,x)  \Big)\\
+\int_{\R^d}dy \Big(L_t(x,\nabla_x) G^{y}  \phi^{\varepsilon,y}(t,x) - M_{t,x}^{\varepsilon,y} \phi^{\varepsilon,y}(t,x)\Big)   \\
:= I_1^\varepsilon(t,x) +I_2^\varepsilon(t,x).
%&=& \int_{\R^d}dy\phi^{\varepsilon,y}(t,x) = \int_{\R^d}\tilde{p}^{y}(\varepsilon,x,y)h(t,y)dy .
\end{eqnarray*}

Observe that  from \eqref{rel:diff:G+M}, we have:
$$
I_1^\varepsilon(t,x)=- \int_{\R^d}\tilde{p}^{t+\varepsilon,y}(t,t+\varepsilon,x,y)h(t,y)dy.
$$

Now, from Lemma \ref{convergence_dirac_temp}, when $\varepsilon \rightarrow 0$ we have the convergence:
$$
\int_{\R^d}\tilde{p}^{t+\varepsilon, y}(t,t+\varepsilon,x,y)h(t,y)dy \underset{\varepsilon \rightarrow 0}{\longrightarrow} h(t,x).
$$
%We point out that this is not a consequence of the dirac convergence, as the freezing parameter is integrated.
Consequently, $I_1^\varepsilon(t,x)$ allows us to recover the test function $h(t,x)$ when $\varepsilon$ tends to zero, that is:
$$
\lim_{\varepsilon \rightarrow 0} \left|S^\Delta (I_1^\varepsilon)\right| = |S^\Delta h|.
$$

On the other hand, 
\begin{eqnarray*}
I_2^\varepsilon(t,x) = \int_{\R^d}dy \Big(L_t(x,\nabla_x) G^{y}  \phi^{\varepsilon,y}(t,x) - M_{t,x}^{\varepsilon,y} \phi^{\varepsilon,y}(t,x) \Big)\\
=\int_{\R^{d}}dy \int_t^T ds\Big(L_t(x,\nabla_x) - L_t(\theta_{t,s+\varepsilon}(y),\nabla_x) \Big)  \tilde{p}_\alpha^{s+\varepsilon,y}(t,s+\varepsilon,x,y) h(s,y)\\
= \int_{\R^{d}}dy \int_t^T dsH(t,s+\varepsilon,x,y) h(s,y) .
\end{eqnarray*}

From the controls of Subsection \ref{SECT_SMTH_H}, specifically, Lemma \ref{EST_H_PROP}, we have for all $(t,x)\in [0,T]\times \R^d$:
\begin{eqnarray*}
|I_2^\varepsilon (t,x)|%\left|\Big(L(x,\partial_x) - L(\theta_{0,t}(y),\partial_x) \Big)  \Psi_\varepsilon(t,x)\right| \\
\le 
|h|_\infty  \int_{\R^{d}}dy \int_t^T ds |H(t,s+\varepsilon,x,y)| \le C(T+\varepsilon-t)^\omega |h|_\infty.
\end{eqnarray*}
Thus, denoting by $||S^\Delta||:=\sup_{|f|_\infty\le 1}|S^\Delta f|$, we have:
\begin{eqnarray*}
\lim_{\varepsilon \rightarrow 0} \left| S^\Delta (I_2^\varepsilon) \right| 
\le ||S^\Delta || 
\liminf_{\varepsilon \rightarrow 0} \left| I_2^\varepsilon \right|_{\infty} \le C ||S^\Delta || (T-t)^\omega |h|_\infty.
\end{eqnarray*}

%Hence:
%$$
%\lim_{\varepsilon \rightarrow 0} \left| S^\Delta (I_2^\varepsilon)\right| \le ||S^\Delta || (T-t)^\omega |h|_\infty,
%$$

Now, from \eqref{+-GENE GEL} with $f(t,x) = \Psi_\varepsilon(t,x)$, we have
$$
S^\Delta \Big( (\partial_{\cdot} + L(\cdot, \nabla_\cdot)) \Psi_\varepsilon \Big)=0 \Rightarrow |S^\Delta(I_1^\varepsilon)|=|S^\Delta(I_2^\varepsilon)|.
$$

%$$
%\left|S^\Delta \Big( (\partial_s+L(\theta_{0,t}(y),\partial_x)) \Psi_\varepsilon(t,x) \Big)\right| =  \left|S^\Delta \Big( (L(x,\partial_x)-L(\theta_{0,t}(y),\partial_x)) \Psi_\varepsilon(t,x) \Big)\right|,
%$$
Thus, for $T-t$ small enough,
\begin{eqnarray*}
|S^\Delta h| = \lim_{\varepsilon \rightarrow 0} \left|S^\Delta (I_1^\varepsilon)\right|
 = \lim_{\varepsilon \rightarrow 0}\left|S^\Delta I_2^\varepsilon \right| \leq1/2 \|S^\Delta\| |h|_{\infty}.
\end{eqnarray*}
By a monotone class argument, the previous inequality still holds for bounded Borel functions 
 $h$ compactly supported in $[0,T)\times \R^{d} $.
Taking the supremum over $|h|_\infty\le 1$ leads to $\|S^\Delta\| \leq 1/2 \|S^\Delta\|$. Since  $\|S^\Delta\| \leq T-t$, we deduce that $\|S^\Delta\|=0$ which proves the result on $[0,T] $. Regular conditional probabilities allow to extend the result on $\R^+ $, see e.g. Theorem 4, Chapter II, paragraph 7, in \cite{shir:96}.%, see also Chapter 6 in \cite{stro:vara:79} and \cite{stro:75}. 
\end{proof}

\subsection{Proof of Lemma \ref{LEMME_IT_KER_TEMP}.}
\label{PROOF_LEMME_IT_KER}

In Subsection \ref{EST_FRZN_DENS_SECTION}, we have obtained estimates for both the frozen density and the parametrix kernel.
In this section, we expose how these estimates are used to deduce the convergence of the parametrix series through the controls of Lemma \ref{LEMME_IT_KER_TEMP}.

\begin{lemma}\label{FIRST_STEP}
Fix $t\le \tau \le T$.
There exists $C>1$, $\omega>0$ such that for all $(x,y)\in (\R^{d})^2 $:
$$
\int_\R \bar{p}(t,\tau,x,z) \bar{H}(\tau,T,z,y) dz \le C \left( (T-\tau)^{\omega-1}+ (\tau-t)^{\omega-1}+\frac{\delta \wedge |\theta_{t,T}(y)-x|^{\eta(\alpha \wedge 1)}}{T-t} \right)\bar{p}(t,T,x,y).
$$
Consequently, when integrated in time, we have:
$$|\tilde{p}\otimes H(t,T,x,y) | \le C \Big( (T-t)^\omega\bar{p}(t,T,x,y) + \rho(t,T,x,y) \Big),$$
where we recall the notation $ \rho(t,T,x,y) = \delta \wedge |\theta_{t,T}(y)-x|^{\eta(\alpha \wedge 1)}\bar{p}(t,T,x,y)$.

\end{lemma}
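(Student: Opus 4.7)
The plan is to adapt the argument of Lemma \ref{SEMI_GROUP_PROP} to the setting weighted by the Hölder factor $\delta\wedge |z-\theta_{\tau,T}(y)|^{\eta(\alpha\wedge 1)}$ present in $\bar H$, using the two smoothing estimates of Lemma \ref{SMTH_H} to absorb the $(T-\tau)^{-1}$ singularity. The key geometric input is that, by the Lipschitz property of the flow (a consequence of Gronwall), $|\theta_{\tau,t}(x)-\theta_{\tau,T}(y)|\asymp |x-\theta_{t,T}(y)|$ uniformly in $\tau\in [t,T]$, so the triangle inequality $|\theta_{\tau,t}(x)-\theta_{\tau,T}(y)|\le |z-\theta_{\tau,t}(x)|+|z-\theta_{\tau,T}(y)|$ ensures that at every $z$ one of the two summands on the right is comparable to $|x-\theta_{t,T}(y)|$.

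I would decompose $\R^d=\Omega_1\cup \Omega_2$, with $\Omega_1=\{|z-\theta_{\tau,t}(x)|\ge |z-\theta_{\tau,T}(y)|\}$ and $\Omega_2$ its complement. On $\Omega_1$, the monotonicity argument used in the proof of Lemma \ref{SEMI_GROUP_PROP} yields $\bar p(t,\tau,x,z)\le C\bar p(t,T,x,y)$; factoring $\bar p(t,T,x,y)$ out, what remains is bounded by $(T-\tau)^{-1}\int(\delta\wedge|z-\theta_{\tau,T}(y)|^{\eta(\alpha\wedge 1)})\bar p(\tau,T,z,y)\,dz$, which by the second estimate of Lemma \ref{SMTH_H} is $\le C(T-\tau)^{\omega-1}$. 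On $\Omega_2$, the same monotonicity argument gives $\bar p(\tau,T,z,y)\le C\bar p(t,T,x,y)$; I then apply the sub-additivity
$$
\delta\wedge|z-\theta_{\tau,T}(y)|^{\eta(\alpha\wedge 1)}\le C\bigl(\delta\wedge|z-\theta_{\tau,t}(x)|^{\eta(\alpha\wedge 1)}+\delta\wedge|x-\theta_{t,T}(y)|^{\eta(\alpha\wedge 1)}\bigr),
$$
so that the first piece, paired with $\bar p(t,\tau,x,z)$, produces $(\tau-t)^{\omega-1}$ via the first estimate of Lemma \ref{SMTH_H}, while the second piece is $z$-independent and factors out.

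The main obstacle is the residual contribution $(T-\tau)^{-1}\bigl(\delta\wedge|x-\theta_{t,T}(y)|^{\eta(\alpha\wedge 1)}\bigr)\bar p(t,T,x,y)$, which must be reshaped to an $(T-t)^{-1}$ form in order to produce a $\tau$-integrable bound after time integration. I would split the analysis according to whether $T-\tau\ge (T-t)/2$, in which case $(T-\tau)^{-1}\le 2(T-t)^{-1}$ directly, or $\tau-t\ge (T-t)/2$; in the latter subregime one instead attaches the Hölder factor to the $\bar p(t,\tau,x,\cdot)$ side via the same triangle split, picks up an extra $(\tau-t)^\omega\asymp (T-t)^\omega$ from the first estimate of Lemma \ref{SMTH_H}, and absorbs $(\tau-t)^\omega(T-\tau)^{-1}$ into $(T-\tau)^{\omega-1}$ using $\tau-t\ge T-\tau$. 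The extra $(T-\tau)^{-1/\alpha}\ind_{\alpha>1}$ summand present in the pointwise bound on $|H|$ from Proposition \ref{EST_H_PROP} (and not captured by $\bar H$) is handled by Lemma \ref{SEMI_GROUP_PROP} alone, yielding $(T-\tau)^{-1/\alpha}\bar p(t,T,x,y)$, which is $\tau$-integrable when $\alpha>1$.

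The corollary $|\tilde p\otimes H(t,T,x,y)|\le C((T-t)^\omega\bar p+\rho)(t,T,x,y)$ then follows by integrating the pointwise estimate in $\tau\in[t,T]$: the singular terms $(T-\tau)^{\omega-1}+(\tau-t)^{\omega-1}$ integrate to $2\omega^{-1}(T-t)^\omega$, producing the $(T-t)^\omega\bar p(t,T,x,y)$ contribution, while the $\tau$-independent term $(T-t)^{-1}\bigl(\delta\wedge|x-\theta_{t,T}(y)|^{\eta(\alpha\wedge 1)}\bigr)\bar p(t,T,x,y)$ integrates to exactly $\rho(t,T,x,y)$.
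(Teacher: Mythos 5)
Your proposal uses the right ingredients — the $\Omega_1/\Omega_2$ space decomposition (which coincides with the paper's $D_1/D_2$), Lemma \ref{SEMI_GROUP_PROP}-type monotonicity, Lemma \ref{SMTH_H}, and the sub-additivity of $u\mapsto\delta\wedge u^{\eta(\alpha\wedge 1)}$ — but the order in which they are applied creates a genuine gap. The paper first splits on whether $\bar p(t,T,x,y)$ is in the diagonal or off-diagonal regime, and only in the off-diagonal regime performs the spatial decomposition; in the diagonal regime it instead splits \emph{in time}, according to $\tau\lessgtr (T+t)/2$. That ordering is not cosmetic. If you try to run the monotonicity argument on $\Omega_1$ uniformly, it fails precisely in the diagonal regime: when $|\theta_{t,T}(y)-x|\lesssim (T-t)^{1/\alpha}$ and $\tau-t\ll T-t$, the set $\Omega_1$ contains points $z$ with $|\theta_{t,\tau}(z)-x|\lesssim(\tau-t)^{1/\alpha}$ (for instance $z$ near $\theta_{\tau,t}(x)$ if $|\theta_{t,T}(y)-x|$ is small), and there $\bar p(t,\tau,x,z)\asymp(\tau-t)^{-d/\alpha}\gg(T-t)^{-d/\alpha}\asymp\bar p(t,T,x,y)$. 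The inequality $\bar p(t,\tau,x,z)\le C\bar p(t,T,x,y)$ from Lemma \ref{SEMI_GROUP_PROP} requires either that $\bar p(t,T,x,y)$ be off-diagonal, so that $|\theta_{t,\tau}(z)-x|\gtrsim|\theta_{t,T}(y)-x|\ge(T-t)^{1/\alpha}$ forces the off-diagonal estimate for $\bar p(t,\tau,x,z)$ and the exponent $1+(\gamma-d)/\alpha>0$ saves the day, or that $\tau-t\asymp T-t$ so that the time scales match. The paper's diagonal/off-diagonal split supplies exactly one of these two situations in each branch; your global spatial split does not.

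The second issue is arithmetic. On $\Omega_2$ the first piece of the sub-additivity split, fed into the first estimate of Lemma \ref{SMTH_H}, produces $\frac{(\tau-t)^\omega}{T-\tau}\bar p(t,T,x,y)$, not $(\tau-t)^{\omega-1}\bar p(t,T,x,y)$ as you write; these coincide only when $T-\tau\asymp T-t$. Your proposed repair — splitting on $\tau-t\gtrless(T-t)/2$ and, in the regime $\tau-t\ge T-\tau$, "absorbing $(\tau-t)^\omega(T-\tau)^{-1}$ into $(T-\tau)^{\omega-1}$" — runs in the wrong direction: with $\tau-t\ge T-\tau$ and $\omega>0$ one has $(\tau-t)^\omega\ge(T-\tau)^\omega$, hence $(\tau-t)^\omega(T-\tau)^{-1}\ge(T-\tau)^{\omega-1}$, which is the reverse of what you need. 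In the paper this $\frac{1}{T-\tau}$ never detaches from the rest: in the diagonal branch with $\tau\le(T+t)/2$ one has $T-\tau\asymp T-t$, so $\frac{1}{T-\tau}\bar p(\tau,T,z,y)\le C(T-t)^{-1}\bar p(t,T,x,y)$ and then $\frac{(\tau-t)^\omega}{T-t}\le(\tau-t)^{\omega-1}$; in the off-diagonal branch the factor $(T-\tau)^{(\gamma-d)/\alpha}$ from the off-diagonal estimate of $\bar p(\tau,T,z,y)$ is what is traded against the rest. Your high-level outline and corollary step (integrating $(T-\tau)^{\omega-1}+(\tau-t)^{\omega-1}$ to $(T-t)^\omega$ and the residual to $\rho$) are correct, but the pointwise estimate in the middle needs the paper's two-level case split rather than a universal spatial one followed by an ad hoc time patch.
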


\begin{proof}
Let us recall that $\bar{p}$ and $\bar{H}$ are the upper bounds for $\tilde p$ and $H$ respectively, so that:
$$
|\tilde{p}\otimes H(t,T,x,y)| \le \int_t^T d\tau \int_{\R^d} \bar{p}(t,\tau,x,z) \bar{H}(\tau,T,z,y) dz.
$$
We investigate the integration in time. We have:
\begin{eqnarray*}
\int_\R \bar{p}(t,\tau,x,z) \bar{H}(\tau,T,z,y) dz
&=& \int_{\R^d} \frac{(\tau-t)^{-d/\alpha}}{ \left(1+\frac{|x-\theta_{t,\tau}(z)|}{(\tau-t)^{1/\alpha}} \right)^{\alpha+\gamma}} \bar{q}(|x-\theta_{t,\tau}(z)|)\\ 
&&\times\frac{\delta \wedge |z-\theta_{\tau,T}(y)|^{\eta(\alpha\wedge1)}}{T-\tau}\frac{(T-\tau)^{-d/\alpha}}{ \left(1+\frac{|z-\theta_{\tau,T}(y)|}{(T-\tau)^{1/\alpha}} \right)^{\alpha+\gamma}} \bar{q}(|z-\theta_{\tau,T}(y)|) dz.
\end{eqnarray*}

Assume first that $|\theta_{t,T}(y)-x| \le C (T-t)^{1/\alpha}$.
The arguments differs according to the position of $\tau$ in $[t,T]$.
First, assume that $\tau \in [\frac{T+t}{2},T]$. We have that $\tau-t \asymp T-t$, so that 
%
%Then, we split the time integral in $\int_t^{\frac{T+t}{2}}d\tau + \int_{\frac{T+t}{2}}^Td\tau$, and we use the fact that the diagonal estimate is global.
%In the integral over $[\frac{T+t}{2},T]$ we have that $\tau-t \asymp T-t$, so that 
$$
\bar{p}(t,\tau,x,z) \le (\tau-t)^{-d/\alpha} \asymp (T-t)^{-d/\alpha} \asymp \bar{p}(t,T,x,y).
$$

Consequently, we take $\bar{p}(\tau,x,z)$ out of the integral and use the smoothing property of Lemma \ref{SMTH_H}:
\begin{eqnarray*}
\bar{p}(t,T,x,y) \int_{\R^d} 
\frac{\delta \wedge |z-\theta_{\tau,T}(y)|^{\eta(\alpha\wedge1)}}{T-\tau}\frac{(T-\tau)^{-d/\alpha}}{ \left(1+\frac{|z-\theta_{\tau,T}(y)|}{(T-\tau)^{1/\alpha}} \right)^{\alpha+\gamma}} \bar{q}(|z-\theta_{\tau,T}(y)|) dz \\
\le C (T-\tau)^{\omega-1} \bar{p}(t,T,x,y).
\end{eqnarray*}

When, $\tau\in [0,\frac{T+t}{2}]$ we have $T-\tau \asymp T-t$, 
and we have 
\begin{eqnarray*}
\frac{1}{T-\tau}\bar{p}(\tau,T,z,y) \le C\frac{(T-\tau)^{-d/\alpha}}{T-\tau} \le C\frac{(T-t)^{-d/\alpha}}{T-t} \le C\frac{1}{T-t} \bar{p}(t,T,x,y).
\end{eqnarray*}
Next, we can bound 
$$\delta \wedge |z-\theta_{\tau,T}(y)|^{\eta(\alpha\wedge1)} \le C_T (\delta \wedge |\theta_{t,\tau}z-x|^{\eta(\alpha\wedge1)}+ \delta \wedge |x-\theta_{t,T}(y)|^{\eta(\alpha\wedge1)}).$$
Thus, we finally obtain:
\begin{eqnarray*}
\frac{1}{T-t}\bar{p}(t,T,x,y) \int_{\R^d} 
\frac{(\tau-t)^{-d/\alpha}}{ \left(1+\frac{|\theta_{t,\tau}(z)-x|}{(\tau-t)^{1/\alpha}} \right)^{\alpha+\gamma}} \bar{q}(C|\theta_{t,\tau}(z)-x|) \\
\times\Big(\delta \wedge |\theta_{t,\tau}(z)-x|^{\eta(\alpha\wedge1)}+ \delta \wedge |\theta_{t,T}(y)-x|^{\eta(\alpha\wedge1)} \Big)dz \\
\le C \Big( (\tau-t)^{\omega-1}  + \frac{\delta \wedge |\theta_{t,T}(y)-x|^{\eta(\alpha\wedge1)}}{T-t} \Big) \bar{p}(t,T,x,y).
\end{eqnarray*}

Assume now that $|\theta_{t,T}(y)-x| \ge C(T-t)^{1/\alpha}$.
In this case, the off-diagonal estimate holds for $\bar{p}(t,T,x,y)$, that is:
$$
\bar{p}(t,T,x,y) \asymp \frac{(T-t)^{1+\frac{\gamma-d}{\alpha}}}{|\theta_{t,T}(y)-x|  ^{\alpha+\gamma}} \bar{q}(|\theta_{t,T}(y)-x|  ).
$$

On the other hand, we have:
$$
 |\theta_{t,T}(y)-x| \le C_T \Big( |\theta_{t,\tau}(z)-x| +|\theta_{\tau,T}(y)-z| \Big).
$$
%Now, since $b$ is bounded, we have $\tau|b(y) - b(z)| \le 2t|b|_\infty \le 2t^{1/\alpha}|b|_\infty$,
%when $\alpha \ge 1$. When $\alpha \le 1$, $b=0$ and we do not make this discussion.
%By assumption, $|y-x-b(y)t| \ge Ct^{1/\alpha}$, we have
%$$
%|y-z-b(y)(t-\tau)| + |z-x-b(z)\tau| \ge |y-x-b(y)t| (1-2|b|_\infty C^{-1}).
%$$
%Thus, by taking $C$ large enough, we have:
%$$
%|y-z-b(y)(t-\tau)| + |z-x-b(z)\tau| \ge C |y-x-b(y)t|.
%$$
In other words, we have either $|\theta_{\tau,T}(y)-z|  \ge C  |\theta_{t,T}(y)-x|$, or $|\theta_{t,\tau}(z)-x|  \ge C |\theta_{t,T}(y)-x|$.
Consequently, we split $\R^{d}= D_1 \cup D_2$ with
\begin{eqnarray*}
D_1&=& \{ z \in \R^d, |\theta_{\tau,T}(y)-z|   \le  |\theta_{t,\tau}(z)-x|\}, \\
D_2&=& \{ z \in \R^d, |\theta_{\tau,T}(y)-z|   >  |\theta_{t,\tau}(z)-x|\} .
\end{eqnarray*}

Now, when $z\in D_1$, we have that $ |\theta_{t,T}(y)-x| \asymp  |\theta_{t,\tau}(z)-x|$, thus $\bar{p}(t,\tau,x,z)$ is off-diagonal and
 we can bound:
\begin{eqnarray*}
\bar{p}(t,\tau,x,z) &\le& C\frac{(\tau-t)^{1+\frac{\gamma-d}{\alpha}}}{ |\theta_{t,\tau}(z)-x|^{\alpha+\gamma}}\bar{q}( |\theta_{t,\tau}(z)-x|) \\
&\le& C \frac{(T-t)^{1+\frac{\gamma-d}{\alpha}}}{ |\theta_{t,T}(y)-x| ^{\alpha+\gamma}} \bar{q}( |\theta_{t,T}(y)-x| ) \asymp\bar{p}(t,T,x,y) .
\end{eqnarray*}
For the last inequality, we used the fact that $\bar q$ is non increasing and that $\gamma+\alpha>d$ so that the exponent in $\tau-t$ is positive.
Thus, we can take out $\bar{p}(t,\tau,x,z)$ of the integral, and use the smoothing property of $H$, Lemma \ref{SMTH_H}. Denoting by $I_{D_1}$ the integral in space in $|\tilde{p} \otimes H|$ where the space integration is over $D_1$, we have:

\begin{eqnarray*}
I_{D_1} &\le& C\bar{p}(t,T,x,y)\int_{D_1} \frac{\delta \wedge  |\theta_{\tau,t}(y)-z| ^{\eta(\alpha\wedge1)}}{T-\tau}\frac{(T-\tau)^{-d/\alpha}}{ \left(1+\frac{ |\theta_{\tau,T}(y)-z|}{(T-\tau)^{1/\alpha}} \right)^{\alpha+\gamma}} \bar{q}( |\theta_{\tau,T}(y)-z|) dz\\
&\le& C (T-\tau)^{\omega-1}\bar{p}(t,T,x,y).
\end{eqnarray*}

When $z\in D_2$, we have $ |\theta_{\tau,T}(y)-z| \asymp |\theta_{t,T}(y)-x|$.
In this case, observe that we have:
$$
\frac{1}{T-\tau}\bar{p}(T-\tau,z,y) \asymp \frac{(T-\tau)^{\frac{\gamma-d}{\alpha}}}{ |\theta_{\tau,T}(y)-z|^{\alpha +\gamma}} \bar{q}( |\theta_{\tau,T}(y)-z|) \le (T-\tau)^{\frac{\gamma-d}{\alpha}} \frac{\bar{q}( |\theta_{t,T}(y)-x|)}{ |\theta_{t,T}(y)-x|^{\alpha +\gamma}}.
$$
Thus, using Lemma \ref{SMTH_H} and recalling the constrain $\alpha+\gamma >d$, the integral becomes:
\begin{eqnarray*}
I_{D_2} &\le& C \frac{\bar{q}( |\theta_{t,T}(y)-x|)}{ |\theta_{t,T}(y)-x|^{\alpha +\gamma}} 
(T-\tau)^{\frac{\gamma-d}{\alpha}}  \int_{D_2}dz \bar{p}(t,\tau,x,z) \\
&&\times(\delta \wedge  |\theta_{\tau,t}(y)-z|^{{\eta(\alpha\wedge1)}}+ \delta \wedge  |\theta_{t,T}(y)-x|^{\eta(\alpha\wedge1)})\\
 &\le& C \frac{\bar{q}( |\theta_{t,T}(y)-x|)}{ |\theta_{t,T}(y)-x|^{\alpha +\gamma}} 
(T-\tau)^{\frac{\gamma-d}{\alpha}} \\
&&\times \left(  (t-\tau)^{\omega} +  \delta \wedge  |\theta_{t,T}(y)-x|^{\eta(\alpha\wedge1)}\right)\\
&\le& C \left(  (t-\tau)^{\omega-1} +  \frac{\delta \wedge  |\theta_{t,T}(y)-x|^{\eta(\alpha\wedge1)}}{T-t}\right)\bar{p}(t,T,x,y).
\end{eqnarray*}

%To get the last inequality, we used Lemma \ref{SMTH_H} and integrated in time to recover $\bar{p}(t,T,x,y)$.
%In every case, we obtained the announced bound, 
Thus the proof is complete.

\end{proof}

The following Lemma controls the second step of the iterated convolutions.

\begin{lemma}\label{SECOND_STEP}
Fix $t\in (0,T]$.
There exists $C>1$, $\omega>0$ such that for all $(x,y)\in (\R^{d})^2 $:
$$
\int_\R \rho(t,\tau,x,z) \bar{H}(\tau,T,z,y) dz \le C \Big( (T-\tau)^{\omega-1}+ (\tau-t)^{\omega-1}\Big)\bar{p}(t,T,x,y).
$$
Consequently, when integrated in time, we have:
$$|\rho\otimes H(t,T,x,y) | \le C (T-t)^\omega\bar{p}(t,T,x,y).$$
\end{lemma}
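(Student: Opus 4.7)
The plan is to repeat the case analysis of Lemma~\ref{FIRST_STEP}---diagonal versus off-diagonal regime for $|\theta_{t,T}(y)-x|$ against $(T-t)^{1/\alpha}$, temporal splitting of $[t,T]$ at the midpoint, and spatial decomposition $\R^d = D_1 \cup D_2$ in the off-diagonal regime---while exploiting the extra H\"older factor $a := \delta\wedge|x-\theta_{t,\tau}(z)|^{\eta(\alpha\wedge 1)}$ that distinguishes $\rho$ from $\bar p$. The Lipschitz property of the flow gives
$$|x-\theta_{t,\tau}(z)| \asymp |\theta_{\tau,t}(x)-z| \le |z-\theta_{\tau,T}(y)| + |\theta_{\tau,T}(y)-\theta_{\tau,t}(x)| \le C\bigl(|z-\theta_{\tau,T}(y)|+|\theta_{t,T}(y)-x|\bigr),$$
hence pointwise $a \le C(b + d)$ with $b:=\delta\wedge|z-\theta_{\tau,T}(y)|^{\eta(\alpha\wedge 1)}$ and $d:=\delta\wedge|\theta_{t,T}(y)-x|^{\eta(\alpha\wedge 1)}$.

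Multiplying by the H\"older factor $b$ already carried by $\bar H$, we get $ab \le C(b^2+bd)$, whence
$$\int_{\R^d} \rho(t,\tau,x,z)\bar H(\tau,T,z,y)\,dz \le C\int_{\R^d}\frac{b^2\bar p(t,\tau,x,z)\bar p(\tau,T,z,y)}{T-\tau}\,dz + Cd\int_{\R^d}\bar p(t,\tau,x,z)\bar H(\tau,T,z,y)\,dz.$$
The second summand is treated directly by Lemma~\ref{FIRST_STEP}: with $d\le\delta$, it is dominated by $C\delta\bigl((T-\tau)^{\omega-1}+(\tau-t)^{\omega-1}\bigr)\bar p(t,T,x,y)$ plus a surviving term of the form $Cd^2/(T-t)\,\bar p(t,T,x,y)$. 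The decisive gain comes from the first summand: writing $b^2\le\delta^2\wedge|z-\theta_{\tau,T}(y)|^{2\eta(\alpha\wedge 1)}$ and mimicking the proof of Lemma~\ref{SMTH_H} with doubled H\"older exponent yields an extra $(T-\tau)^{\omega}$ smoothing compared with the analogous single-H\"older integral. Performing this in each sub-case of Lemma~\ref{FIRST_STEP}---exploiting either $\bar p(t,\tau,x,z)\le C\bar p(t,T,x,y)$ (when $\tau$ lies in the upper half of $[t,T]$ or $z\in D_1$) or $\bar p(\tau,T,z,y)\asymp (T-\tau)^\beta (T-t)^{-\beta}\bar p(t,T,x,y)$ with $\beta=1+(\gamma-d)/\alpha$ (on $D_2$)---the additional $(T-\tau)^\omega$ precisely absorbs the leftover $d^2/(T-t)$ contribution, yielding the announced pointwise bound $C\bigl((T-\tau)^{\omega-1}+(\tau-t)^{\omega-1}\bigr)\bar p(t,T,x,y)$ without re-introducing a $\rho(t,T,x,y)$ term.

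Integrating in $\tau\in[t,T]$ via the standard Beta-function estimate $\int_t^T\bigl((T-\tau)^{\omega-1}+(\tau-t)^{\omega-1}\bigr)\,d\tau\le 2(T-t)^\omega/\omega$ then delivers the corollary $|\rho\otimes H(t,T,x,y)|\le C(T-t)^\omega\bar p(t,T,x,y)$. The main obstacle is the rigorous verification of the doubled-exponent smoothing in the off-diagonal region $D_2$: the polynomial tail of $\bar p(\tau,T,z,y)$ must be carefully balanced against the singularity $1/(T-\tau)$ from $\bar H$, and the correct power of $T-t$ emerges only thanks to the index constraint $\alpha+\gamma>d$, exactly as in the analogous step of Lemma~\ref{FIRST_STEP}; the remaining computations are bookkeeping.
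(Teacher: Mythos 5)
Your plan diverges from the paper's, and the divergence introduces a genuine gap. The paper's proof of this lemma is one sentence: because the factor $a := \delta\wedge|x-\theta_{t,\tau}(z)|^{\eta(\alpha\wedge 1)}$ now multiplies the \emph{first} density, the triangle inequality that produced the residual $\rho(t,T,x,y)$ term in Lemma~\ref{FIRST_STEP} is simply never needed. Concretely, in each sub-case one keeps inside the $dz$-integral exactly the density whose argument matches the available H\"older factor and applies Lemma~\ref{SMTH_H} directly. In the sub-cases where $\bar p(t,\tau,x,z)$ is pulled out of the integral (namely $\tau\in[\tfrac{T+t}{2},T]$ in the diagonal regime, and $z\in D_1$ off-diagonal), one first bounds $a\le\delta$ and then applies Lemma~\ref{SMTH_H} to $b\,\bar p(\tau,T,z,y)$, getting $(T-\tau)^{\omega-1}$. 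In the complementary sub-cases where $\tfrac{\bar p(\tau,T,z,y)}{T-\tau}$ is pulled out (namely $\tau\in[t,\tfrac{T+t}{2}]$, and $z\in D_2$), one instead bounds $b\le\delta$ and applies Lemma~\ref{SMTH_H} to $a\,\bar p(t,\tau,x,z)$, getting $(\tau-t)^{\omega-1}$. Since $a$ pairs correctly with $\bar p(t,\tau,x,z)$ by definition of $\rho$, the triangle inequality never enters, and no $d:=\delta\wedge|\theta_{t,T}(y)-x|^{\eta(\alpha\wedge1)}$ factor is ever introduced.

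By contrast, you reinstate the triangle inequality $a\le C(b+d)$, which generates the term $d\int\bar p\,\bar H\,dz$. Applying Lemma~\ref{FIRST_STEP} to it returns a contribution $C\,d^2/(T-t)\,\bar p(t,T,x,y)$, and this piece does not vanish: your claim that the improved $(T-\tau)^{2\omega-1}$ smoothing of the $b^2$ summand ``absorbs'' it is incorrect, because the $b^2$ term and the $bd$ term are additive, so a gain in one cannot cancel the other. Moreover, the leftover term itself is not bounded by the claimed right-hand side: in the off-diagonal regime with $|\theta_{t,T}(y)-x|\gtrsim 1$ one has $d\asymp\delta$, a fixed constant, so $d^2/(T-t)\asymp\delta^2(T-t)^{-1}$, whereas $(T-\tau)^{\omega-1}+(\tau-t)^{\omega-1}\lesssim(T-t)^{\omega-1}$ at $\tau=(T+t)/2$; as $T-t\to 0$ the former dominates. (And trying to treat the $bd$ piece without Lemma~\ref{FIRST_STEP} leads you back to the triangle inequality on $b$, reproducing the same obstruction.) Finally, the ``doubled H\"older exponent'' version of Lemma~\ref{SMTH_H} would require the integrability threshold $\alpha+\gamma-d>2\eta(\alpha\wedge1)$ in some sub-case, which is not guaranteed by $\textbf{[H]}$; the paper's argument does not need it. To repair the proof, drop the decomposition $a\le C(b+d)$ entirely and proceed as in the paper: in each spatio-temporal sub-case, bound the ``non-matching'' H\"older factor by $\delta$ and apply Lemma~\ref{SMTH_H} to the factor whose form matches the density that stays inside the $dz$-integral.
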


\begin{proof}
The proof is similar to the previous one, but now, due to the presence of $\delta\wedge  |\theta_{t,\tau}(z)-x|^{\eta(\alpha\wedge1)}$ multiplying the first density, we do not use the triangle inequality anymore, because we are always in position to use Lemma~\ref{SMTH_H}.
\end{proof}

%
%Using the previous lemmas, we get the following result.
%
%\begin{corollary}\label{KER_ITER_ET_CONV_SERIE} There exists 
%$C_{\ref{KER_ITER_ET_CONV_SERIE}}=4C_{\ref{LEMME_IT_KER_TEMP}}>0$, s.t.
%for all $t\in \ge 0,\  (x,y)\in (\R^{d})^2$, $\forall k\in \N $:
%\begin{eqnarray*}
%|\tilde p\otimes H^{(2k)}(t,x,y)|\le C_{\ref{KER_ITER_ET_CONV_SERIE}}^{2k} t^{k\omega} \Big( t^{k\omega} \bar {p}_\alpha(t,x,y) + (\bar {p} + \rho)(t,x,y) \Big)  \\
%|\tilde p\otimes H^{(2k+1)}(t,x,y)|\le C_{\ref{KER_ITER_ET_CONV_SERIE}}^{2k+1}t^{k\omega} \Big( t^{(k+1)\omega}\bar {p}+ t^\omega(\bar {p} + \rho)+ \rho \Big)(t,x,y).
%\end{eqnarray*}
%%where $C_{\ref{KER_ITER_ET_CONV_SERIE}}:=4C_{\ref{LEMME_IT_KER_TEMP}} $.
%\end{corollary}
%

\subsection{Proof of the Lower Bound.}
\label{proof_lower_bound}
Observe first, that due to the controls on the parametrix series, the convergence of the series actually yields a diagonal lower found for the density of $(X_t)_{t> 0}$.
Indeed, we have $p(t,T,x,y) = \tilde{p}(t,T,x,y)+ p\otimes H(t,T,x,y)$.
Also, we have the upper bound $p(t,T,x,y) \le \bar{p}(t,T,x,y)$, which yields
\begin{eqnarray*}
p\otimes H(t,T,x,y) &\le& \int_t^T d\tau  \int_{\R^d} \bar{p}(t,\tau,x,z) \frac{\delta \wedge  |\theta_{\tau,T}(y)-z|^{\eta(\alpha \wedge 1)}}{T-\tau} \bar{p}(\tau,T,z,y) dz \\
&\le& \Big((T-t)^{\omega} + \delta \wedge |\theta_{t,T}(y)-x|^{\eta(\alpha \wedge 1)} \Big) \bar{p}(t,T,x,y).
\end{eqnarray*}
Thus, in diagonal regime: $ |\theta_{t,T}(y)-x|\le C(T-t)^{1/\alpha}$, we have for $t$ small enough $p(t,T,x,y) \ge C(T-t)^{-d/\alpha}$.
In other words, we have a diagonal lower bound for the density of \eqref{EDS1}.

We now turn to the off-diagonal regime.
The idea to derive a lower bound for the density is to say that 
in order to go from $x$ to $y$ in time $T-t$, we stay close to the transport of $x$ by the deterministic system, 
for a certain amount of time, 
then, a big jump brings us to a neighborhood of the pull back of $y$ by the deterministic system
and the process stays in a neighborhood of this curve.

In the off-diagonal regime: $ |\theta_{t,T}(y)-x|\ge C(T-t)^{1/\alpha}$, we write from the Chapman-Kolmogorov equation for some $t_0 \in [t,T]$:
\begin{eqnarray*}
p(t,T,x,y) &=& \int_{\R^d}dz p(t,t_0,x,z) p(t_0,Tz,y) \ge \int_{B(\theta_{t_0,T}(y),C(T-t_0)^{1/\alpha})} p(t,t_0,x,z) p(t_0,T,z,y) dz \\
&\ge& \PP\Big(X_{t_0}\in B(\theta_{t_0,T}(y),C(T-t_0)^{1/\alpha}) \Big| X_t = x \Big) \inf_{z\in B(\theta_{t_0,T}(y),C(T-t_0)^{1/\alpha})} p(t_0,T,z,y)\\
&\ge& \PP\Big(X_{t_0}\in B(\theta_{t_0,T}(y),C(T-t_0)^{1/\alpha})\Big| X_t=x\Big) C(T-t_0)^{-d/\alpha}.
\end{eqnarray*}

Consequently we have to give a lower bound for $\PP\Big(X_{t_0}\in B(\theta_{t_0,T}(y),C(T-t_0)^{1/\alpha})\Big|X_t=x\Big)$.
To this end, we introduce the process $(X_t^\delta)_{t\ge 0}$ with jumps larger than $\delta$ removed, for some $\delta$ to be specified.
Specifically, $(X_s^\delta)_{s\ge 0}$ solves the SDE:
$$
X_s^\delta = x +\int_t^s F(u,X_u^\delta)du + \int_t^s \sigma(u,X_u^\delta) dZ^\delta_u,
$$
where $(Z_u^\delta)_{u\ge 0}$ is the process $(Z_u)_{u\ge0}$ with jumps larger that $\delta$ removed. Its L\'evy measure is $\ind_{\{|z|\le \delta\}}\nu(dz)$.
Now, observe that we can recover the process $(X_s)_{s\ge 0}$ from $(X_s^\delta)_{s\ge 0}$ by introducing
the arrival times of the compound poisson process:
$$
N_s = \sum_{t<u\le s} \Delta Z_u \ind_{\{|\Delta Z_u|\ge \delta \}}.
$$
Let us denote by $(T_k)_{k\ge 1}$ the arrival times of the process $(N_s)_{s\in[t,T]}$.
We know that the variables $T_{k+1}-T_k$ are independent and have exponential distribution of parameter 
$\nu\big(B(0,\delta)^c\big)$.
Then, we have:
\begin{eqnarray*}
&&\forall t\le s\le T_1, \ X_s = X_s^\delta,\\
&&X_{T_1}= X_{T_1^-}^\delta+ \sigma(T_1,X_{T_1^-}^\delta)\Delta Z_{T_1},\\
&&\forall T_1\le s\le T_2, \   X_s=X_{T_1}+X_s^\delta-X_{T_1^-}^\delta,
\end{eqnarray*}
and so on.
We refer to the Theorem 6.2.9 in Applebaum \cite{appl:09} for a proof of this statement.
We now split:
\begin{eqnarray*}
\PP\Big(X_{t_0}\in B(\theta_{t_0,T}(y),C(T-t_0)^{1/\alpha})\Big|X_t=x\Big)&=& \PP\Big(X_{t_0}\in B(\theta_{t_0,T}(y),C(T-t_0)^{1/\alpha}); T_1 \ge t_0 \Big|X_t=x\Big)\\
&&+\PP\Big(X_{t_0}\in B(\theta_{t_0,T}(y),C(T-t_0)^{1/\alpha}); T_1 \le t_0\Big|X_t=x\Big).
\end{eqnarray*}

Using the Markovian notations $\PP^{t,x}(\cdot) = \PP(\cdot|X_t=x)$, we thus focus on:
\begin{eqnarray*}
&&\PP^{t,x}\Big(X_{t_0}\in B\big(\theta_{t_0,t}(y),C(T-t_0)^{1/\alpha}\big); T_1 \le t_0\Big) \\
&=&\E^{t,x} \left[ \PP^{t,x} \Big(X_{t_0}\in B\big(\theta_{t_0,T}(y),C(T-t_0)^{1/\alpha}\big) \Big| \mathcal{F}_{T_1} \Big) \ind_{\{T_1 \le t_0 \}}   \right],
\end{eqnarray*}
where we denoted $\mathcal{F}_{T_1} =\sigma(X_s^\delta; s \le T_1)$, the filtration generated by $X_s^\delta$ until time $T_1$. 
Now, by the strong Markov property, we have that 
\begin{eqnarray*}
\PP^{t,x} \Big(X_{t_0}\in B(\theta_{t_0,T}(y),C(T-t)^{1/\alpha}) \Big| \mathcal{F}_{T_1} \Big)&=&\PP^{T_1,X_{T_1}}\Big( X_{t_0} \in B(\theta_{t_0,T}(y),C(T-t_0)^{1/\alpha}) \Big)\\
& =& \int_{B(\theta_{t_0,T}(y),C(T-t_0)^{1/\alpha})} p(T_1,t_0, X_{T_1},z)dz.
\end{eqnarray*}
Thus:
\begin{eqnarray*}
&&\PP^{t,x}\Big(X_{t_0}\in B(\theta_{t_0,T}(y),C(T-t)^{1/\alpha}); T_1 \le t_0\Big) \\
&=&\E^{t,x} \left[\int_{B(\theta_{t_0,T}(y),C(T-t_0)^{1/\alpha})} p(T_1,t_0, X_{T_1},z)dz\ind_{\{T_1 \le t_0 \}}   \right].
%&=&\E^x \left[ \E^x \left(\int_{B(y,Ct^{1/\alpha})} p(t/2-T_1, X_{T_1},z)dz \bigg| X_{T_1^-}\right)  \ind_{\{T_1 \le t \}} \right],
\end{eqnarray*}

Now, since $X_{T_1}= X_{T_1^-}^\delta+ \sigma(T_1,X_{T_1^-}^\delta)\Delta Z_{T_1}$, 
and since $T_1$ is the first jump larger that $\delta$, conditionally to $X_{T_1^-}$ we have that $\sigma(T_1, X_{T_1^-}) \Delta Z_{T_1}+X_{T_1^-}$ is a Poisson process on $\R^d\backslash B(0,\delta)$.
Thus, we have for all test function $f$, given $X_{T_1^-}$, the law of $X_{T_1}$ is: 
\begin{eqnarray*}
\E[f(X_{T_1}) | X_{T_1^-}] &=& \E [ f(\sigma(T_1, X_{T_1^-}) \Delta Z_{T_1} + X_{T_1^-} )| X_{T_1^-}] \\
&=& \int_{ \{|w|\ge \delta\}} f \big( \sigma(T_1,X_{T_1^-})w + X_{T_1^-} \big) \frac{ \nu(dw)}{\nu\big(B(0,\delta)^c\big)} .
\end{eqnarray*}

Consequently, we obtain:
\begin{eqnarray*}
&& \E^{t,x} \left(\int_{B(\theta_{t_0,T}(y),C(T-t_0)^{1/\alpha})} p(T_1,t_0, X_{T_1},z)dz \bigg| X_{T_1^-}\right) \\
&& =\int_{\R^d}  
 \int_{B(\theta_{t_0,T}(y),C(T-t_0)^{1/\alpha})}dz\  p(T_1, t_0,\sigma(T_1,X_{T_1^-}^\delta)w + X_{T_1^-}\delta ,z)
\frac{ \nu(dw)}{\nu\big(B(0,\delta)^c\big)} .
\end{eqnarray*}

Now, we exploit the fact that $T_1$ in independent and exponentially distributed with parameter $\nu(B(0,\delta)^c)$ to write:
\begin{eqnarray*}
\PP^{t,x}\Big(X_{t_0}\in B(\theta_{t_0,T}(y),C(T-t)^{1/\alpha}); T_1 \le t_0\Big)
 &=&\E^{t,x} \bigg[ 
\int_t^{t_0} ds\int_{B(\theta_{t_0,T}(y),C(T-t_0)^{1/\alpha})}dz
\int_{\R^d}\frac{ \nu(dw)}{\nu\big(B(0,\delta)^c\big)}\\
&& \times p(s,t_0,\sigma(s,X_{s}^\delta)w + X_{s}^\delta ,z)
\nu\big(B(0,\delta)^c\big) e^{-s \nu\big(B(0,\delta)^c\big)} 
 \bigg].
\end{eqnarray*}

Observe that the quantity $\nu\big(B(0,\delta)^c\big)$ gets cancelled.
Now, we can give a lower bound  by localizing the integral over $w$ so that $\sigma(s,X_{s}^\delta)w + X_{s}^\delta$ is close to $\theta_{s,t_0}(z)$. That is, where the density $p(s,t_0, \sigma(X_{s}^\delta)w + X_{s}^\delta ,z)$ is in diagonal regime:

\begin{eqnarray*}
&&\PP^{t,x}\Big(X_{t_0}\in B(\theta_{t_0,T}(y),C(T-t_0)^{1/\alpha}); T_1 \le t_0\Big)  \\
&\ge&\E^{t,x} \bigg[ 
\int_t^{t_0} ds \int_{B(\theta_{t_0,T}(y),C(T-t_0)^{\frac1\alpha})} dz
\int_{ \{ |\sigma(s,X_{s}^\delta)w + X_{s}^\delta-\theta_{s,t_0}(z)| \le C(t_0-s)^{\frac1 \alpha} \}} \nu(dw) \\
&& \times p\Big(s,t_0, \sigma(s,X_{s}^\delta)w + X_{s}^\delta ,z\Big)
 e^{-s \nu(B(0,\delta)^c)}
 \bigg]\\
& \ge& \E^{t,x} \bigg[ \int_t^{t_0} ds \Big(t_0-s\Big)^{-d/\alpha}
\int_{B(\theta_{t_0,T}(y),C(T-t_0)^{1/\alpha})} dz \\
&& \times \nu \bigg(B \Big( \sigma(s,X_s^\delta)^{-1}(\theta_{s,t_0}(z) - X_{s}^\delta),C\left(t_0-s\right)^{1/\alpha} \Big) \bigg)
 e^{-s \nu(B(0,\delta)^c)}
 \bigg].
\end{eqnarray*}
Additionally, we can lower bound the last probability by localizing $X_s^\delta$ close to $\theta_{s,t}(x)$:
\begin{eqnarray*}
&&\PP^{t,x}\Big(X_{t_0}\in B(\theta_{t_0,T}(y),C(T-t)^{1/\alpha}); T_1 \le t_0\Big)\\
&\ge&\E^{t,x} \bigg[ \int_t^{t_0} ds\ind_{\{|X_s^\delta - \theta_{s,t}(x) | \le C(s-t)^{1/\alpha} \}} \Big(t_0-s\Big)^{-d/\alpha}
\int_{B(\theta_{t_0,t}(y),C(T-t_0)^{1/\alpha})} dz \\
&& \times \nu \bigg(B \Big( \sigma(s,X_s^\delta)^{-1}(\theta_{s,t_0}(z) - X_{s}^\delta),C\left(t_0-s\right)^{1/\alpha} \Big) \bigg)
 e^{-s \nu(B(0,\delta)^c)}
\bigg].
\end{eqnarray*}

Now, from assumption \textbf{[H-LB]}, $ \nu(B(0,\delta)^c)\le 1/\delta^\alpha$ so that
taking $\delta = (T-t)^{1/\alpha}$ yields $ e^{-s \nu(B(0,\delta)^c)} \ge C$.
Also, since $z \in B(\theta_{t_0,T}(y),C(T-t_0)^{\frac1\alpha})$, by the Lipschitz property of the flow, 
$$
\theta_{s,t_0}(z) \in \theta_{s,t_0} \Big(B(\theta_{t_0,T}(y),C(T-t_0)^{\frac1\alpha})\Big) \subset B(\theta_{s,T}(y),C (T-t_0)^{\frac1\alpha}),
$$ 
up to a modification of $C$ for the last inclusion.
On the other hand, $X_{s}^\delta\in B( \theta_{s,t}(x),s^{1/\alpha})$, 
thus, 
$$
\sigma(s,X_s^\delta)^{-1}(\theta_{s,t_0}(z) - X_{s}^\delta) \in B\big(\sigma(s,X_s^\delta)^{-1}( \theta_{t,T}(y) - x), C(T-t)^{1/\alpha} \big).
$$

Now, from \textbf{[H-LB]}, $A_{low}$ is invariant under the action of $\sigma(t,x)$ for all $(t,x)\in \R_+ \times \R^d$, thus, if
\begin{equation}\label{inclusion}
B\big( \theta_{t,T}(y) - x, C(T-t)^{1/\alpha} \big) \subset A_{low},
\end{equation}
%Note that the constant $C$ radius $C(T-t)^{1/\alpha}$ have changed.
%Now, using the H\"older continuity of $\sigma$, and the localization  $|X_s^\delta-\theta_{s,t}(x)| \le (s-t)^{1/\alpha}$,
 %and $|\theta_{s,0}(x)-x| \underset{s\rightarrow 0}{\longrightarrow} 0$, for $t$ small enough, 
% we have up to a modification of $C$:
%$$
%B\big(\sigma(s,X_s^\delta)^{-1}( \theta_{t,T}(y) - x), C(T-t)^{1/\alpha} \big)\subset B \big(\sigma(s,\theta_{s,t}(x))^{-1}( \theta_{t,T}(y) - x), C(T-t)^{1/\alpha} \big).
%$$
%so that we obtain:
%$$
%\sigma(s,X_s^\delta)^{-1}(\theta_{s,t_0}(z) - X_{s}^\delta)\in  B \big(\sigma(s,\theta_{s,t}(x))^{-1}( \theta_{t,T}(y) - x), C(T-t)^{1/\alpha} \big)
%$$
%
%Thus, if we have:
%\begin{equation*}%\label{CLAIM_LOWER_BOUND}
%\forall s\in [t,t_0], \ B \big(\sigma(s,\theta_{s,t}(x))^{-1}( \theta_{t,T}(y) - x), C(T-t)^{1/\alpha} \big) \subset 
%%\sigma(\theta_{s,0}(x))^{-1}( \theta_{0,t}(y) - x )\in {\rm int}
%A_{low},
%\end{equation*}
%then $\sigma(s,X_s^\delta)^{-1}(\theta_{s,t_0}(z) - X_{s}^\delta) \in A_{low}$.
%Observe that this is exactly the condition \eqref{CLAIM_LOWER_BOUND} of Theorem \ref{MTHM_TEMP}.
%
%Consequently, 
we can use the lower bound in \textbf{[H-LB]} to get:
\begin{eqnarray*}
\nu \left(B \Big( \sigma(s,X_s^\delta)^{-1}(\theta_{s,t_0}(z) - X_{s}^\delta),C\left(t_0-s\right)^{1/\alpha} \Big) \right) 
&\ge& C\left(t_0-s\right)^{\gamma/\alpha} \frac{ \underline{q}(  |\sigma(s,X_s^\delta)^{-1}(\theta_{s,t_0}(z) - X_{s}^\delta)|  ) }{|\sigma(s,X_s^\delta)^{-1}(\theta_{s,t_0}(z) - X_{s}^\delta)|^{\gamma+\alpha}}.
\end{eqnarray*}

Observe that this is exactly the condition \eqref{inclusion} of Theorem \ref{MTHM_TEMP}.
We thus obtain:
\begin{eqnarray*}
&&\PP^{t,x}\Big(X_{t_0}\in B(\theta_{t_0,T}(y),C(T-t_0)^{1/\alpha}); T_1 \le t_0\Big)\\
%  \ge 
%C \E^x \left[ 
%\ind_{\{|X_s^\delta - \theta_{s,0}(x) | \le s^{1/\alpha} \}}
%\int_0^t ds (t/2-s)^{\frac{\gamma-d}{\alpha}}
%\int_{B(\theta_{0,t/2}(y),Ct^{1/\alpha})} dz \frac{ \underline{q}(  |\sigma(X_s^\delta)^{-1}(\theta_{t,s}(z) - X_{s}^\delta)|  ) }{|\sigma(X_s^\delta)^{-1}(\theta_{t,s}(z) - X_{s}^\delta)|^{\gamma+\alpha}}
% \right]\\
& \ge& C \E^{t,x} \bigg[ 
\int_t^{t_0} ds \ind_{\{|X_s^\delta - \theta_{s,t}(x) | \le C(s-t)^{1/\alpha} \}}
\left(t_0-s\right)^{\frac{\gamma-d}{\alpha}}\\
&&\times \int_{B(\theta_{t_0,T}(y),C(T-t_0)^{1/\alpha})} dz \frac{ \underline{q}(  |\sigma(s,X_s^\delta)^{-1}(\theta_{t,s}(z) - X_{s}^\delta)|  ) }{|\sigma(s,X_s^\delta)^{-1}(\theta_{t,s}(z) - X_{s}^\delta)|^{\gamma+\alpha}}
 \bigg].
\end{eqnarray*}

Consequently, since the function $u \mapsto \underline{q}(u)|u|^{-\gamma-\alpha}$ is decreasing, the lower bound will follow from the upper bound:

$$
|\sigma(s,X_s^\delta)^{-1}(\theta_{s,t_0}(z) - X_{s}^\delta)| \le C |y-\theta_{T,t}(x)|.
$$

We write from the ellipticity of $\sigma$:
\begin{eqnarray*}
|\sigma(s,X_s^\delta)^{-1}(\theta_{s,t_0}(z) - X_{s}^\delta)| &\le & C |\theta_{s,t_0}(z) - X_{s}^\delta|\\
&\le& C(|\theta_{s,t_0}(z) -\theta_{s,t}(x)| +|\theta_{s,t}(x) - X_{s}^\delta|).
\end{eqnarray*}
Now, in the considered set, $|\theta_{s,t}(x) - X_{s}^\delta|\le C(s-t)^{1/\alpha}\le C(T-t)^{1/\alpha} \le  C|\theta_{t,T}(y) -x|$.
Thus, we have:
$$
|\sigma(s,X_s^\delta)^{-1}(\theta_{s,t_0}(z) - X_{s}^\delta)| \le C \big( |\theta_{s,t_0}(z) -\theta_{s,t}(x)|+C|\theta_{t,T}(y) -x|\big).
$$

On the other hand, we can write:
\begin{eqnarray*}
|\theta_{s,t_0}(z) -\theta_{s,t}(x)| &\le& |\theta_{s,t_0}(z)-\theta_{s,T}(y)| + |\theta_{s,T}(y) -\theta_{s,t}(x)|.
\end{eqnarray*}
Thus, from the Lipschitz property of the flow, 
$$
 |\theta_{s,T}(y) -\theta_{s,t}(x)| \le C_T  |\theta_{t,T}(y) -x|.
$$
On the other hand, we have $\theta_{s,T}(y) = \theta_{s,t_0}\circ\theta_{t_0,T}(y)$ so that:
$$
|\theta_{s,t_0}(z)-\theta_{s,T}(y)| = |\theta_{s,t_0}(z)-\theta_{s,t_0}\circ\theta_{t_0,T}(y)| \le C_T  |z-\theta_{t_0,T}(y)|,
$$
where to the get the last inequality, we once again relied on the Lipschitz property of the flow.
We recall that $|z-\theta_{t_0,T}(y)| \le C(T-t_0)^{1/\alpha} \le |\theta_{t,T}(y) -x|$, consequently we finally obtain:

$$
|\sigma(s,X_s^\delta)^{-1}(\theta_{s,t_0}(z) - X_{s}^\delta)|  \le C_T  |\theta_{t,T}(y) -x|.
$$

Using this last inequality to estimate the probability:

\begin{eqnarray*}
&&\PP^{t,x}\Big(X_{t_0}\in B(\theta_{t,t_0},C(T-t)^{1/\alpha}); T_1 \le t_0\Big)\\
  &\ge& 
C \E^{t,x} \left[ 
\int_t^{t_0} ds
\ind_{\{|X_s^\delta - \theta_{s,t}(x) | \le (s-t)^{1/\alpha} \}}
 \left(t_0-s\right)^{\frac{\gamma-d}{\alpha}}
\int_{B(\theta_{t_0,T}(y),C(T-t_0)^{1/\alpha})} dz \frac{ \underline{q}(   |\theta_{t,T}(y) -x| ) }{ |\theta_{t,T}(y) -x|^{\gamma+\alpha}}
 \right]\\
& \ge& C(T-t_0)^{d/\alpha} \frac{ \underline{q}(   |\theta_{t,T}(y) -x| ) }{ |\theta_{t,T}(y) -x|^{\gamma+\alpha}} 
\int_t^{t_0} ds \left(t_0-s\right)^{\frac{\gamma-d}{\alpha}} \PP (|X_s^\delta - \theta_{s,t}(x) | \le (s-t)^{1/\alpha}),
\end{eqnarray*}
where $(T-t_0)^{d/\alpha}$ comes from the volume of the ball $B(\theta_{t_0,T}(y),C(T-t_0)^{1/\alpha})$ obtained from the integral in $dz$.
Using the diagonal lower estimates for the density, we actually see that $\PP (|X_s^\delta - \theta_{s,t}(x) | \le (s-t)^{1/\alpha}) \asymp 1$, therefore:
\begin{eqnarray*}
\PP^{t,x}\Big(X_{t_0}\in B(\theta_{t_0,T}(y),C(T-t_0)^{1/\alpha}); T_1 \le t_0 \Big)
 \ge C(T-t_0)^{d/\alpha} (t_0-t)^{1+\frac{\gamma-d}{\alpha}} \frac{ \underline{q}(   |\theta_{t,T}(y) -x| ) }{ |\theta_{t,T}(y) -x|^{\gamma+\alpha}} .
\end{eqnarray*}

Returning to the first estimate on the density yields:
\begin{eqnarray*}
p(t,T,x,y) &\ge& C(T-t_0)^{-d/\alpha}\PP^{t,x}\Big(X_{t_0}\in B(\theta_{t_0,T}(y),C(T-t_0)^{1/\alpha}); T_1 \le t_0\Big)\\
&\ge& 
C(T-t_0)^{-d/\alpha}(T-t_0)^{d/\alpha} (t_0-t)^{1+\frac{\gamma-d}{\alpha}} \frac{ \underline{q}(   |\theta_{t,T}(y) -x| ) }{ |\theta_{t,T}(y) -x|^{\gamma+\alpha}}\\
&=&C(t_0-t)^{1+\frac{\gamma-d}{\alpha}} \frac{ \underline{q}(   |\theta_{t,T}(y) -x| ) }{ |\theta_{t,T}(y) -x|^{\gamma+\alpha}}.
\end{eqnarray*}
Finally, to get the announced bound, we see that we have to choose $t_0$ such that $t_0-t\asymp T-t$.
This gives:
$$
p(t,T,x,y) \ge C(T-t)^{1+\frac{\gamma-d}{\alpha}} \frac{ \underline{q}(   |\theta_{t,T}(y) -x| ) }{ |\theta_{t,T}(y) -x|^{\gamma+\alpha}}.
$$
which is the off-diagonal lower bound announced.

\begin{remark}
We point out that the assumption \textbf{[H-LB]} appears quite naturally in this procedure as it serves here to give a lower bound on the $\nu$-measure of balls.
Also, we see the that $\sigma(t,x)$ preserves $A_{low}$ is important so that we can use the lower bound for the $\nu$-measure of balls.
%Also, the time $t_0$ is pretty much arbitrary, but still has to be comparable to $T-t$.
%Heuristically, in \eqref{CLAIM_LOWER_BOUND}, this means that the process $(X_s)_{s\in [t,T]}$ 
%has to stay an amount of time comparable to $T-t$ in $A_{low}$ in order to have a lower bound.
\end{remark}

\section*{Acknowledgments}
This article was prepared within the framework of a subsidy granted to the HSE by the Government of the Russian Federation for the implementation if the Global Competitiveness Program.

%%%%%%%%%%%%%%%--BIBLIOGRAPHY--%%%%%%%%%%%%%%%%%%
\bibliographystyle{alpha}
\bibliography{MyLibrary.bib}

%
%\begin{thebibliography}{99}
%
%\bibitem{Aronson:1967}
%D.G.~Aronson, {Bounds for the fundamental solution of a parabolic equation}, {Bull. Amer. Math. Soc.}, {73},  {1967},  {890--896}.
%
%\bibitem{friedman:64}
%A.~Friedman, \emph{Partial differential equations of parabolic type}, Prentice-Hall Inc., Englewood Cliffs, N.J., 1964.
%
%\bibitem{kolo:00}
%V.~Kolokoltsov, \emph{Symetric stable laws and stable like diffusions},
%London mathematical society 2000.
%
%\bibitem{wata:07}
%T.~Watanabe, \emph{Asymptotic estimates of multi-dimensional stable densities and their applications} Trans. Amer. Math. Soc. 359 (2007), no. 6, 2851-2879.
%
%\bibitem{szto:10}
%P.~Sztonyk, \emph{Estimates of Tempered Stable Densities},
%Journal of theoretical probabilities, 2010.
%
%\end{thebibliography}

%%%%%%%%%%%%%%%%%%%%%%%%%%%%%%%%%%%%%%%%%%%%%%%%%%%%%%%%%%%%%%%%%%%
%%                                                               %%
%% You may add acknowledgments (optional).                       %%
%%                                                               %%
%%%%%%%%%%%%%%%%%%%%%%%%%%%%%%%%%%%%%%%%%%%%%%%%%%%%%%%%%%%%%%%%%%%

%\section*{Acknowledgments}

\end{document}